\newcommand{\ones}{{\mathbf 1}}
\newcommand{\norm}[1]{{||#1||}}
\newcommand{\R}{{\mathbb R}}
\newcommand{\RR}{{\mathcal R}}
\newcommand{\rv}{{\mathbf r}}
\newcommand{\xv}{{\mathbf x}}
\newcommand{\av}{{\mathbf a}}
\newcommand{\bv}{{\mathbf b}}
\newcommand{\dv}{{\mathbf d}}
\newcommand{\muv}{{\boldsymbol \mu}}
\newcommand{\Rvec}{{\mathbf R}}
\newcommand{\Wv}{{\mathbf W}}
\newcommand{\mcW}{\mathcal{W}}
\newcommand{\wv}{{\mathbf w}}
\newcommand{\A}{{\mathbb A}}
\newcommand{\XX}{{\mathcal X}}
\newcommand{\XXX}{{\mathbb X}}
\newcommand{\Y}{{\mathcal Y}}
\newcommand{\yv}{{\mathbf y}}
\newcommand{\D}{{\mathcal D}}
\newcommand{\K}{{\mathcal K}}
\newcommand{\N}{{\mathbb N}}
\newcommand{\Av}{{\mathbf A}}
\newcommand{\G}{{\mathcal G}}
\newcommand{\Ish}{{I^\sharp}}
\newcommand{\vect}[1]{{\boldsymbol{#1}}} 
\newcommand{\reqn}[1]{(\ref{#1})}
\newcommand{\tav}{\tilde{\mathbf a}}
\newcommand{\hav}{\hat{\mathbf a}}
\newcommand{\ba}{\begin{array}}
\newcommand{\ea}{\end{array}}
\newtheorem{defn}{Definition}
\newtheorem{thm}{Theorem}
\newtheorem{fact}{Fact}
\newtheorem{lem}{Lemma}
\newtheorem{prop}{Proposition}
\newtheorem{note}{Remark}
\newtheorem{assumption}{Assumption}
\newtheorem{property}{Property}
\newtheorem{obs}{Observation}
\newcommand{\nn}{\nonumber}
\newcommand{\beq}{\begin{equation}}
\newcommand{\eeq}{\end{equation}}
\newcommand{\bmu}{\begin{multline}}
\newcommand{\emu}{\end{multline}}
\newcommand{\bmun}{\begin{multline*}}
\newcommand{\emun}{\end{multline*}}
\newcommand{\bea}{\begin{eqnarray}}
\newcommand{\eea}{\end{eqnarray}}
\newcommand{\bean}{\begin{eqnarray*}}
\newcommand{\eean}{\end{eqnarray*}}
\newcommand{\bit}{\begin{itemize}}
\newcommand{\eit}{\end{itemize}}
\newcommand{\ben}{\begin{enumerate}}
\newcommand{\een}{\end{enumerate}}
\newcommand{\ignore}[1]{{}}
\DeclareMathOperator*{\letF}{F}
\DeclareMathOperator*{\letG}{G}
\definecolor{pink}{rgb}{1,.6,.6}
\title{Many-Sources Large Deviations for Max-Weight Scheduling}
\author{Vijay G. Subramanian, 
\thanks{Vijay Subramanian is with the Hamilton Institute, National University of Ireland, Maynooth, Co. Kildare, Ireland. He acknowledges SFI for the support of this research through grant 07/IN.1/I901.}
Tara Javidi,  
\thanks{Tara Javidi is with the Department of Electrical and Computer Engineering, University of 
California, San Diego, CA, USA.}
and Somsak Kittipiyakul
\thanks{Somsak Kittipiyakul is with the School of Information, Computer and Communication Technology, Sirindhorn International Institute of Technology, Thammasat University, Thailand. This work was carried out as a part of his Ph.D. dissertation while at the Department of Electrical and Computer Engineering, University of California, San Diego, CA, USA.}
}
\begin{document}

\thispagestyle{empty}
 \maketitle

\begin{abstract}
In this paper,  a many-sources large deviations principle (LDP) for the transient workload of a multi-queue single-server system is established where the service rates are chosen from a compact, convex and coordinate-convex rate region and where the service discipline is the max-weight policy. Under the assumption that the arrival processes satisfy a many-sources LDP, this is accomplished by employing Garcia's extended contraction principle that is applicable to quasi-continuous mappings. 

For the simplex rate-region, an LDP for the stationary workload is also
established under the additional requirements that the scheduling policy be work-conserving and that the arrival processes satisfy certain mixing conditions. 

The LDP results can be used to calculate asymptotic buffer overflow probabilities accounting for the multiplexing gain, when the arrival process is an average of \emph{i.i.d.} processes. The rate function for the stationary workload is expressed in term of the rate functions of the finite-horizon workloads when the arrival processes have \emph{i.i.d.} increments.  
\end{abstract}
\begin{IEEEkeywords}
max-weight policy, many-sources LDP, quasi-continuity, Garcia's extended contraction principle.
\end{IEEEkeywords}

\section{Introduction}

The drive to achieve maximum efficiency in wireless data networks and high-speed switches has lead to many advances in the design of good scheduling policies. One such family of good scheduling policies is an online policy\footnote{Online policies are those than can only use the past history of the arrivals, workloads and service decisions to decide on the scheduling choice; for example, these policies are not even aware of average arrival rates.} called the maximum weight (max-weight) scheduling policy. For the typical multi-class queue where only one queue can be served at a time, the max-weight policy serves one of the queues that has the largest value for the product of the workload 
and the service rate. We are interested in applying the max-weight policy to wireless networks where 
the scheduler is able to change the operating parameters at different levels of the traditional networking stack. Thus, 
the server has access to a richer choice of service options when compared to a traditional multi-class queue setting: each service option is a point within a compact, convex and coordinate-convex rate region. In this
setting, the max-weight policy naturally generalizes to finding an operating point within the rate region that has the maximum projection along the workload vector\footnote{In many ways cross-layer optimization has resulted in some firm strides towards a union of information theory and communication networks of the sort that was sought in \cite{hajekephremides1998}.}. Note that in this setting the traditional single-server multi-class queue has a rate-region given by a simplex. 

In the present article, with $K$ independent queues we seek to derive the probability of buffer overflow,
when the server scheduling follows a max-weight policy. More specifically, for a given finite value $B>0$,  we consider the two buffer overflow quantities. First,
we consider $P(\Wv_{0,T} \geq B \mathbf{1}_K)$ where $\Wv_{0,T} \in \R^K_+$ is the transient workload (to be formally defined later) at time $0$ with ``zero'' initial workload at time $-T$ and $\mathbf{1}_K\in\R^K_+$ is the vector of all $1$s. The second quantity we study is the stationary overflow probability for the limiting workload vector as $T \rightarrow \infty$, i.e., 
$P(\mcW \geq B \mathbf{1}_K)$.
 Since these probabilities are, in general, very hard to compute exactly, we consider logarithmic asymptotics to the probabilities of interest using the theory of large deviations. In particular, this paper, under a ``many-sources'' scaling regime, establishes  logarithmic asymptotics for 1) the transient
 workload for a compact, convex, and coordinate-convex rate region; and 2) the stationary workload for a simplex rate region using a work conserving scheduler.
 
 In the classical\footnote{The appellation ``classical" is taken from \cite{CruiseTalk2008} where a general scaling framework is presented that encompasses in a single setting all the different scalings used in the ``many-sources" scaling regime.} many-sources asymptotic, one considers a sequence of queueing systems indexed by the number of (independent) sources multiplexed (or averaged) over a particular queue, i.e., the arrival process to each queue is the average of $L$ processes. 
 The analysis focuses on the asymptotic behavior of the systems when $L \rightarrow \infty$.
 In our work, we consider a generalization of many sources asymptotic in 
which the input to the queueing system $L$ exhibits a sample path large deviations property (LDP) similar to 
that of the average of $L$ independent arrival streams (See Assumption~1).
Given a sample path large deviations principle (see Definition~\ref{defn:LDP}) for the arrival processes, we derive a large deviations principle for the workload under the max-weight scheduling polic. In particular, we first show that the finite-horizon workload is a quasi-continuous map of the arrival process, for both the regular version of the max-weight policy and for a work-conserving version of it. Then the first contribution of the paper is that the finite-horizon workloads satisfy and LDP. This is obtained using a recent extension of the contraction principle by J. Garcia \cite{Garci04}. Restricting our attention to the simplex rate region (corresponding to the traditional multi-class single server queue), we again use Garcia's extended contraction principle (along with a mixing condition assumption on the arrival process) to establish an LDP for the stationary workload. We should emphasize here that in contrast to related ``many-sources" LDP results on FCFS and Priority policies that can be shown to be continuous, our LDP is established for an inherently discontinuous map that results from the max-weight scheduling policy. 
The LDP results (Theorems~\ref{thm:FiniteHorizonLDP}, \ref{thm:FiniteHorizonLDP-NWC} and \ref{thm:InfiniteHorizonLDP}) directly imply that the probability of buffer overflow has an exponential tail whose decay rate is dictated by a good rate function determined by the statistics of the arrival process. This rate function can be expressed as a solution to a finite-dimensional optimization problem which has the same flavor of a deterministic optimal control problem. The final contribution of our work is to provide a simplified form for the corresponding rate functions,  when the arrival process has \emph{i.i.d.} increments. 
 
The outline of the paper is as follows.  In Section~\ref{sect:litSearch}, we briefly motivate and 
contextualize our
work in the larger body of literature on LDP analysis of queues as well as cross layer scheduling.
The problem formulation is given in Section~\ref{sect:ProblemFormulation}. 
Section~\ref{sect:Background} provides background and preliminary results.
The main results of the paper, which are the LDPs of the workloads, are given in Section~\ref{sec:overview} and proved in Section~\ref{sect:LDP_workloads}. Section~\ref{sect:RateFunction} gives simplified expressions 
of the rate functions. 
We conclude in Section~\ref{sect:Conclusion} with a discussion of future work. 
  
We close this section with a summary of various notation used in this work. We use bold letters to 
discriminate vectors from scalar quantities as well as their components. 
We denote the set of natural and non-negative real numbers by $\mathbb{N}$ and 
$\mathbb{R}^+$, respectively. We take $\otimes$ to represent the Kronecker product. 
For $0\le m_1 \le m_2$ integers 
and a vector sequence$(\Av_t, t\in\N)$ where $\Av_t\in \R_+^K$ for $K\in\N$, we define $\Av(m_1,m_2] := \sum^{m_2}_{t=m_1+1} \Av_t$ as the cumulative arrivals from $m_1+1$ until timeslot $m_2$ where addition applies coordinate-wise. For vector-valued sequence $\Av$ we write $\Av|_{(m_1,m_2]}$ to denote the finite subsequence  $\{\Av_{-m_2},\dotsc,\Av_{-m_1-1}\}$. 
For a vector $\xv\in \R_+^K$ and set $B \subset \R_+^K$, 
$\text{Proj}_{B}(\xv)$ denotes the projection of vector $\xv$ on the set $B$, $\text{int}(B):=\{\boldsymbol{\lambda}\in B : \exists \boldsymbol{\lambda}'\in B \text{ s.t. } \boldsymbol{\lambda} < \boldsymbol{\lambda}'\}$ denotes the set of points strictly inside $B$, 
 and $[\xv]^+ := \max\{\mathbf{0},\xv\}$ where the function applies coordinate-wise. 
Lastly, for any given function $\letF:\mathcal{X} \mapsto \mathcal{Y}$ on 
metric spaces $\mathcal{X}$ and $\mathcal{Y}$, and $x \in \mathcal{X}$, we use the notation 
\begin{align*}
\sideset{^x}{}\letF := \{y \in \mathcal{Y}: (\exists x_n \rightarrow x) \text{ such that } \letF(x_n) \rightarrow y\},
\end{align*} 
to denote the set of all cluster points (in $\mathcal{Y}$) of the images of sequences in $\mathcal{X}$ converging to a point $x\in\mathcal{X}$.
Note that   
$\sideset{^{(\cdot)}}{}\letF$, in general, is a correspondence (also called a set-valued function) from $\mathcal{X}$ to $\mathcal{P}(\mathcal{Y})$ (the power set of $\mathcal{Y}$).
However, 
$\sideset{^{(\cdot)}}{}\letF$ is single-valued at $x$, i.e., $\sideset{^x}{}\letF = \{\letF(x)\}$, if and only if $F$ is continuous at $x$.

\section{Related Work}\label{sect:litSearch}

In recent years, cross-layer scheduling has become a major focus of research in queueing and 
information theory due to its potential applications in communication networks. For brevity we do not list
many important works, and instead mention only those closely related to this work.

In our LDP analysis, we follow the lead of many recent papers on the analysis of scheduling algorithms  \cite{Berts98,StolSingleServer,Shakk08,Ying06,Subra08,SubraJournal08,%
Yang06,ShakkottaiManySource} by considering logarithmic asymptotics  to the probabilities of 
certain rare events. Maximum weight scheduling policy falls under class of the generalized $c\mu$-rule policies and is known to be stabilizing under very mild conditions~\cite{TassiulasEphremides92,mckeownswitch1999,daiprabhakar2000,armonybambos2003,Andrews}. A refined analysis of this policy shows that it minimizes the workload in the heavy traffic regime~\cite{vanmieghemGcmu1995,mandelbaumstolyar2004,stolyarMW_HT2004} over a large class of stationary online policies. This optimality of the max-weight policies also carries over to Large Deviations based tail asymptotes: the work-conserving version of these policies is known to minimize the exponent of the tail asymptote of the stationary workload over a large class of stationary, online and work-conserving policies~\cite{StolSingleServer}.

The present paper is closely related to  \cite{Subra08,SubraJournal08},
 where the buffer overflow probability for the workload processes of a single-server multi-queue queueing system under max-weight policies and general compact, convex and coordinate-convex capacity regions was established. While \cite{Subra08,SubraJournal08} addresses the ``large-buffer" scaling regime, this paper establishes similar logarithmic asymptotics results under the ``many-sources'' scaling regime (see \cite{Weiss86,Botvi95,Courc96,Simon95,ShwartzWeiss1995,LikhanovMazumdar1999,DebickiMandjes2003,WeirmanJournal2008,KotopoulosMazumdar2008,DelasMazumdarRosenberg2002,OzturkMazumdarLikhanov2004,MandjesBorst2000,MandjesUitert2005,Wischik1999,Wischik01,Wisch01,MandjesKim2001,Ganes04,Yang06,ShakkottaiManySource,YangShakkottai2006}). As the body of work on the ``many-sources" scaling regime has grown, results have been established for many different scheduling policies for a single-server queue and also for networks of queues, namely, FCFS~\cite{BuffetDuffield94,Duffield94,Botvi95,Simon95,Courc96,
ShwartzWeiss1995,LikhanovMazumdar1999,Wisch01,Ganes04,DebickiMandjes2003,MandjesKim2001,MandjesBorst2000,MandjesUitert2005}, priority queueing~\cite{Wisch01,Ganes04,DelasMazumdarRosenberg2002,MandjesUitert2005,ShakkottaiManySource}, GPS~\cite{KotopoulosMazumdar2008}, and SRPT and similar policies~\cite{YangShakkottai2006,Yang06,WeirmanJournal2008}. Our LDP analysis of max-weight scheduling is strongly motivated and complemented by these papers. 

Finally, we close this section with a discussion of our motivation to consider the ``many-sources" 
scaling studied in this apper.  The interest in the ``many-sources" asymptotic is known to be best motivated 
by 1) a recent and practical interest in applications when there are large number of flows to each user or node. This asymptote usually gives a more refined approximation to the probabilistic quantities of interest by incorporating the impact of the multiplexing gain~\cite{Weiss86,ChoudhuryLucantoniWhitt93,BuffetDuffield94,Duffield94,Botvi95,Simon95,Courc96,
ShwartzWeiss1995,LikhanovMazumdar1999,DebickiMandjes2003,MandjesBorst2000,WeirmanJournal2008,KotopoulosMazumdar2008,DelasMazumdarRosenberg2002,OzturkMazumdarLikhanov2004,Wischik1999,Wischik01,Wisch01,MandjesKim2001,MandjesUitert2005,Ganes04,Yang06,ShakkottaiManySource,YangShakkottai2006} obtained by averaging many traffic sources together. However, our interest
in the ``many-sources" asymptotic has also been fueled by our earlier work \cite{KittiHighSNR07} on  2) a cross-layer optimization of the PHY layer parameters, e.g.,  
duration of the finite code blocks or cooperative cluster size when the fading channel is operated at high signal-to-noise-ratio (SNR). The high SNR regime is a very natural setting for the many-sources scaling since the capacity of the channel typically scales to infinity as $\log(\mathrm{SNR})$, and therefore it is natural to scale the arrival rate of the flows with the same parameter, which is best accomplished by multiplexing more sources; in other words by setting $L\propto \log(\mathrm{SNR})$. 
The present work on the many sources large deviation analysis of max-weight 
provides a first step in extending the above cross-layer optimization to a multi-user setting,  
an important topic for future research.


\section{Problem Formulation} \label{sect:ProblemFormulation}
We consider a discrete-time queueing system with $K\in \N$ independent queues and one server. 
We are interested in the statistical properties of the unfinished workload in queue $k$ at time $t$ under
a max-weight server allocation policy. 
Let $W^k_t \in \R_+$ be the unfinished workload (queue length) of queue $k$ at the beginning of time $-t$ and $R^k_t$ be the amount of service allocated to  queue $k$ during time $(-t,-t+1]$.  
Let $\Wv_t := (W^k_t,k\in \K)$ be the corresponding workload vector and  $\Rvec_t := (R^k_t,k\in \K)$ be the rate vector.  For every queue $k\in\K:= \{1,\ldots,K\}$ we assume that work (in bits) arrives into the queue given by a sequence $(A^k_t, t\in\N)$ where $A^k_t\in \R_+$ is the work brought in at time $-t$. 
For $t\in \N$, the dynamics of the workloads of queue $k\in \K$ is \beq W^k_{t-1} = [W^k_{t}  - R^{k}_{t}]^+ + A^k_{t}. \label{eq:qdynamic}\eeq Note that we assume that the arrivals $\Av_t$ happen any time in $(-t,-t+1)$ but cannot be served in that timeslot $-t$. 

The set of server's operating points is restricted to a compact and convex set 
$\RR\subset {\mathbb{R}}_+^K$ known as the \emph{capacity} or {\emph{rate region}} of the server, i.e., $\Rvec_t \in \RR$, for all time $t$. We make the simplifying assumption that bits are infinitely divisible so that the rate allocations can be assumed to be real numbers. Furthermore, we also assume that $\RR$ is coordinate-convex, i.e, if $\boldsymbol{\mu_1}\in\RR$, then every $\boldsymbol{\mu_2}\in {\mathbb{R}}_+^K$ such that $\boldsymbol{\mu_2} \leq \boldsymbol{\mu_1}$ is also in $\RR$ where the inequalities apply along each coordinate. We are 
interested in the max-weight  scheduler, 
and its closely related work-conserving version. 
At the beginning of timeslot $-t$, the rate vector $\Rvec_t \in \RR$ 
is selected by a max-weight scheduler 
in response to the current workload $\Wv_t$. Specifically, under
max-weight scheduler 
and in response to the current workload $\Wv_t$, 
the rate vector $\Rvec^*_t$ 
is chosen 
such that 
\beq \label{eq:maxwt}
\Rvec^{*}_t \in  \arg\max_{\Rvec \in \RR} <\Rvec , \Wv_t>. 
\eeq
As later established by Lemma~\ref{lem:cont_H}, it is possible to construct a quasi-continuous (see Defn~\ref{defn:quasi-continuity_condition} in Section~\ref{sect:Background}) function $H$ such that
$\Rvec^{*}_t =H(\Wv_t)$. We call this construction the (max-weight) scheduling function. 
We also define a 
non-idling modification of max-weight scheduler for which the rate of service $\Rvec_t^{**} $ 
is such that it splits the service when the unfinished workload in each queue $k$ is less than 
$C^k =\max \{R^k: \Rvec \in \RR\} $. 
Lemma~\ref{lem:cont_H} also shows that it is possible to construct a 
quasi-continuous 
function $H^{\mathrm{wc}}$ such that
$\Rvec^{**}_t =H^{\mathrm{wc}}(\Wv_t)$ and 
\beq \label{eq:Hwc}
H^{\mathrm{wc}}(\Wv_t) = 
\begin{cases}
 \text{Proj}_{\RR}(\Wv_t) &  \text{if }\Wv_t \in \Pi_{k=1}^{K} [0,C_k); \\
H(\Wv_t) &  \text{otherwise}.
\end{cases}
\eeq
When necessary we will distinguish the workload vectors that result from the work-conserving max-weight by labeling them as $\Wv^{\mathrm{wc}}$. 

As mentioned earlier, in this paper we are interested in the probability distributions for
the \emph{finite-horizon} and \emph{infinite-horizon} workloads. The finite-horizon workload, denoted by $\Wv_{0,T}$, is the workload at time $0$, assuming the initial condition at time $-T$ is $\Wv_T =\mathbf{0}$ . The index $T$ in $\Wv_{0,T}$ reminds us of this initial condition.\footnote{Note that the result remains valid even when the initial condition is within $\RR$ with the work-conserving scheduler. With $\Wv_T \in \RR$, we always have the workload at time $-T+1$ be $\Wv_{T-1} = [\Wv_T - H^{\mathrm{wc}}(\Wv_T)]^+ + \Av_T = \Av_T$ from the non-idling condition that we imposed on the server allocation mechanism as $\text{Proj}_{\RR}(\Wv_T)=\Wv_T$.}  
The infinite-horizon workload, $\mcW$, is defined as $ \mcW  := \lim_{T \rightarrow \infty} \Wv_{0,T}$. 
We  assume that the limit exists but may be infinite. 
Note that our results for the infinite-horizon workload are obtained in the restricted setting 
of a work conserving max-weight scheduler operating on a simplex rate region. 
For this work-conserving max-weight scheduler, it is known that $\mcW^{\mathrm{wc}}$ is the stationary workload when the system is stable.

We will use functions $G_T$ and $G^{\mathrm{wc}}_T$ 
to relate the arrival process $\Av|_{(0,T]}$ with the unfinished work under max-weight scheduling, $\Wv_{0,T}$ and under work conserving max-weight $\Wv_{0,T}^{\mathrm{wc}}$, i.e. 
$\Wv_{0,T} = G_T (\Av|_{(0,T]})$ and $\Wv^{\mathrm{wc}}_{0,T} = G^{\mathrm{wc}}_T (\Av|_{(0,T]})$. Similarly, we also define function $G^{\mathrm{wc}}$ to describe the workload under
work-conserving max-weight when the arrival sequence is given,  
i.e.\ $\mcW^{\mathrm{wc}} = G^{\mathrm{wc}}(\Av)$; we do not indicate the rate-region here as it is implicitly understood to be the simplex rate-region.

For each user $k \in \K $ and system indexed by $L\in\N$, we will assume a stationary arrival process of work brought into the system given by a sequence $A^{k,L}:=(A^{k,L}_t, t\in \N)$ where $A^{k,L}_t \in \R_+$ is the work (in bits) brought in at time $-t$ into the queue of user $k$. The arrivals to different queues/users are assumed to be mutually independent. Also let $\Av^L := (A^{k,L},k\in \K)$ be the sequence of arrival vectors. In our large deviation analysis, we characterize the 
asymptotic probability distributions for the finite-horizon and infinite-horizon workloads,  
$G_T(\Av^L|_{(0,T]})$ and $G^{\mathrm{wc}}_T(\Av^L|_{(0,T]})$,
as $L \rightarrow \infty$. 


We close this section by noting that in its typical avatar a max-weight scheduler is also accompanied by a set of non-zero weights $\boldsymbol{\beta}\in \mathbb{R}_+^K$ to weigh the workload vectors while determining the max-weight service vector. Using the observations in \cite{Subra08,SubraJournal08} it can be seen that there is no loss of generality in assuming that the weight vector is $\mathbf{1}_K$. 

\section{Background and Preliminaries} \label{sect:Background}

In this section, we provide a brief review of fundamental definitions, concepts, and relevant 
results in large deviations theory that are essential for understanding our paper. 
\ignore{Since we use these notions and definitions in our modeling, as well 
as analysis, the following section is essential in facilitating the understanding of this paper, 
particularly by readers not familiar with the topics. Readers knowledgable about the topic can 
skip this section.} Except for Lemma~\ref{lem:cont_H}, which establishes the 
quasi-continuity of the scheduling functions $H$ and $H^{\rm{wc}}$, 
the material in this section can be found 
in \cite{Wisch01,Ganes04,Garci04,Dembo98}.

\subsection{Quasi-continuity and Almost Compactness}

In this section, we recall two important analytic properties for functions on metric spaces: 
quasi-continuity and almost compactness. These properties allow for an 
extension of contraction principle to which we later appeal. 
First, let us provide the definition of quasi-continuity on metric spaces:


\begin{defn}\cite[Theorem 3.2]{Garci04} \label{defn:quasi-continuity_condition} 
Let $\XX,\Y$ be complete metric spaces. Function $F:\XX \mapsto \Y$ 
is {\emph{quasi-continuous}} at $x\in \XX$ if and only if 
for each $x\in \XX$, there is a sequence $\{x_n\}$ such that $x_n\rightarrow x, F(x_n) \rightarrow F(x)$,
and such that for all $n,$ $F$ is continuous at $x_n$.
\end{defn}

\begin{note} 
Obviously, every continuous function is quasi-continuous. A step function $F:\R \mapsto \R$, where $F(x) = 0$ for $x<0$, $F(x) = 1$ for $x\ge0$, is quasi-continuous. However, if $F(0) = 1/2$, then $F$ is not quasi-continuous. 
\end{note}

\begin{note} \label{Rem:CompositionQuasi}
An important property that we will user later is that if $F$ is a continuous function and $G$ is a quasi-continuous function, then $F \circ G$ is quasi-continuous. However, $G \circ F$ is not necessarily quasi-continuous \cite{Garci04}. 
\end{note}

As stated before, max-weight and its work conserving variation allow for  
quasi-continuous function selections.
\begin{lem} \label{lem:cont_H}
There exist quasi-continuous functions 
$H$ and $H^{\mathrm{wc}}$ such that:
\beq
H(\Wv_t) \in  \arg\max_{\Rvec \in \RR} <\Rvec , \Wv_t>,  \nonumber
\eeq
and 
\beq 
H^{\mathrm{wc}}(\Wv_t) = \left\{ \begin{array}{ll}
 \text{Proj}_{\RR}(\Wv_t) &  \Wv_t \in \Pi_{k=1}^{K} [0,C_k) \\
H(\Wv_t) &  \mbox{otherwise} \end{array}. \right.  \nonumber
\eeq
\end{lem}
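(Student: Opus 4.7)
The plan is to construct $H$ by lexicographic tie-breaking inside the argmax correspondence, establish its quasi-continuity by exhibiting perturbation directions whose positive multiples yield unique maximizers converging to the chosen selection, and then paste this $H$ together with the continuous projection $\mathrm{Proj}_{\RR}$ to obtain $H^{\mathrm{wc}}$ as specified in~(\ref{eq:Hwc}).

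Concretely, set $F_0(\Wv):=\RR$, $F_1(\Wv):=\arg\max_{\Rvec\in F_0(\Wv)}\langle\Rvec,\Wv\rangle$, and $F_{k+1}(\Wv):=\arg\max_{\Rvec\in F_k(\Wv)}R^{k}$ for $k=1,\ldots,K$; each $F_k(\Wv)$ is a nonempty closed face of $\RR$ by compactness and convexity, and sequential maximization of all $K$ coordinates pins down a singleton $F_{K+1}(\Wv)=\{H(\Wv)\}$, which is automatically an extreme point of $F_1(\Wv)$. Berge's maximum theorem gives upper hemicontinuity of $F_1$, so $H$ is continuous at every $\Wv_0$ where $F_1(\Wv_0)$ is a singleton. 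For $\Wv_0$ with $|F_1(\Wv_0)|>1$, I would select a direction $\mathbf{u}=(\beta_1,\ldots,\beta_K)$ with $\beta_1\gg\beta_2\gg\cdots\gg\beta_K>0$, the ratios chosen (relative to the diameter of $\RR$) so that $\Rvec\mapsto\langle\Rvec,\mathbf{u}\rangle$ encodes the lex order on $\RR$; a two-level compactness argument then shows that for all sufficiently small $\epsilon>0$, $\arg\max_{\Rvec\in\RR}\langle\Rvec,\Wv_0+\epsilon\mathbf{u}\rangle=\{H(\Wv_0)\}$, because the $\Wv_0$ term first restricts the maximizer to $F_1(\Wv_0)$ and the lex-encoded perturbation then isolates $H(\Wv_0)$. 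Choosing $\epsilon_n\downarrow 0$ yields the sequence $\Wv_n\to\Wv_0$ with $H$ continuous at each $\Wv_n$ and $H(\Wv_n)\equiv H(\Wv_0)$ required by Definition~\ref{defn:quasi-continuity_condition}.

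For $H^{\mathrm{wc}}$ defined by~(\ref{eq:Hwc}): on the open box $\prod_k [0,C_k)$ it equals the continuous projection $\mathrm{Proj}_{\RR}$, and on the open complement of that box's closure it equals the quasi-continuous $H$. Only the boundary $\{\Wv:W^k=C_k \text{ for some }k\}$ is delicate; there $H^{\mathrm{wc}}(\Wv)=H(\Wv)$ by definition, and I would approach such $\Wv$ from strictly outside the closed box by the same perturbation $\Wv_n=\Wv+\epsilon_n\mathbf{u}$, adding a common positive offset to $\mathbf{u}$ if necessary (without disturbing its lex role) so that each component of $\Wv_n$ strictly exceeds the corresponding $C_k$. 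In a neighborhood of each such $\Wv_n$, $H^{\mathrm{wc}}\equiv H$, so continuity of $H$ at $\Wv_n$ upgrades to continuity of $H^{\mathrm{wc}}$ at $\Wv_n$, and the same sequence witnesses quasi-continuity of $H^{\mathrm{wc}}$ at $\Wv$.

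The main obstacle will be making the single-direction perturbation argument fully rigorous: the chosen $\mathbf{u}$ must simultaneously (i) drive the lex-limit of the argmax along $\Wv_0+\epsilon\mathbf{u}$ to exactly $H(\Wv_0)$ and (ii) render that argmax a singleton for \emph{every} small $\epsilon>0$, not merely in the limit, so that the $\Wv_n$ genuinely lie in the continuity set of $H$. For polytope $\RR$ this follows directly from the normal-cone decomposition, since $H(\Wv_0)$ is a vertex with nonempty open normal cone that the appropriate $\mathbf{u}$ enters; for general compact convex coordinate-convex $\RR$ it requires supplementing the scaling of the $\beta_k$'s with a genericity argument based on density of exposed points (Straszewicz's theorem), using that the lex-selected $H(\Wv_0)$ is always an extreme point of $\RR$.
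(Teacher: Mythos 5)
Your route is genuinely different from the paper's: the paper never constructs $H$ explicitly, but instead observes that the correspondence $\mathcal{H}(\Wv)=\arg\max_{\Rvec\in\RR}\langle\Rvec,\Wv\rangle$ is maximal monotone, hence locally bounded and upper semicontinuous with compact values, and then invokes abstract selection theorems (Matejdes; see also Cao--Moors) guaranteeing that any such correspondence admits a quasi-continuous selection; $H^{\mathrm{wc}}$ is handled by restricting $\mathcal{H}$ to the Baire space $\R_+^K\setminus\prod_k[0,C_k)$ and pasting with the continuous projection. Your constructive perturbation argument is sound, and arguably more informative, when $\RR$ is a polytope: there the argmax at $\Wv_0+\epsilon\mathbf{u}$ is contained in $F_1(\Wv_0)$ for small $\epsilon$, the normal cone at the lex-max vertex has nonempty interior, and the big-$M$ weighting of $\mathbf{u}$ does isolate $H(\Wv_0)$ as the unique maximizer at every nearby perturbed point.

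For a general compact, convex, coordinate-convex $\RR$ --- which is what the lemma asserts --- there is a genuine gap, and Straszewicz's theorem does not close it. The difficulty is this: quasi-continuity at $\Wv_0$ requires $H(\Wv_n)\rightarrow H(\Wv_0)$ for some sequence of continuity points $\Wv_n$, so the value you \emph{assign} at $\Wv_0$ must be a cluster point of the values of $H$ at nearby points where $F_1$ is a singleton. Along any perturbation $\Wv_0+\epsilon\mathbf{u}$, the unique maximizers can only accumulate at points of $\arg\max_{\Rvec\in F_1(\Wv_0)}\langle\Rvec,\mathbf{u}\rangle$, i.e., at \emph{exposed} points of the face $F_1(\Wv_0)$ (or limits thereof). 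But the lexicographic rule pins $H(\Wv_0)$ to the lex-maximal extreme point of $F_1(\Wv_0)$, and for a non-polyhedral face this point need not be exposed in $F_1(\Wv_0)$ and need not be reachable by any single linear perturbation: no finite choice of ratios $\beta_1/\beta_2,\ldots$ encodes the lexicographic order on an infinite convex set (a face containing a segment from $(1,0)$ to $(1-\delta,h)$ with $h\beta_2>\delta\beta_1$ already defeats any fixed $\mathbf{u}$, and $\delta$ can degenerate as $\Wv_0$ varies). Straszewicz only says exposed points are dense among extreme points of $\RR$; it does not make \emph{your chosen} point approachable. The repair is to abandon a priori tie-breaking and instead \emph{define} $H(\Wv_0)$ at non-singleton points as a limit of unique maximizers along a suitable sequence (such $\epsilon$'s exist in abundance since the support function of $\RR$ is differentiable off a Lebesgue-null set, and $\RR$ is compact so limits exist along subsequences) --- which is precisely what the abstract selection theorem delivers. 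A similar care is needed for $H^{\mathrm{wc}}$ on the set $\{\Wv: W^k=C_k\ \text{for some}\ k\}$: adding an offset to $\mathbf{u}$ to escape the closed box changes which point of $F_1(\Wv_0)$ the perturbed maximizers converge to, so you must again let that limit define the value rather than prescribe it in advance.
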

\begin{IEEEproof}
See Appendix~\ref{sec:app1}. The proof relies on the structural properties of the scheduling maps.
\end{IEEEproof}

Having provided the definition of quasi-continuity, 
we are ready to define almost compactness for a function:
\begin{defn}\cite[Lemma 6.1]{Garci04} \label{defn:almost-compact_condition} 
If $\XX,\Y$ are complete metric spaces, a function $F:\XX \mapsto \Y$ is {\emph{almost compact}} at $x \in \XX$ 
if for every sequence ${x_n}$ converging to $x$, there is a subsequence along which $F$ converges to a point $y \in \Y$.
We say that $F$ is almost compact if it is almost compact at every $x \in \XX$. 
\end{defn}

\subsection{Topology for Sample Paths}\label{sect:topology}
Since a large deviations principle is defined using topological entities and since we will deal with continuity and convergence of the workload mappings, we need to precisely specify the topology for the space of the arrival sample paths. We use the scaled-uniform norm/weighted supremum norm 
topology~\footnote{In the theory of weak convergence of probability measures this topology was first proposed in \cite{Muller1968,BorovkovSahanenko1973} for continuous functions vanishing at infinity with a continuous index set. In the same context it was then generalized to \emph{cad-lag} functions with a continuous index set in \cite{Whitt1972}. The most general setting of this topology for discrete-time processes can be found in \cite{Borovkov1974,Sahanenko1974}, and the corresponding setting for continuous-time processes can be found in \cite{Bauer1981}. The usage of this topology in the context of Large Deviations can be found in \cite{DeuschelStroock1989} and \cite{GaneshConnell2002}. Finally, the central theme in \cite{GaneshConnell2002,Ganes04,Wisch01} is to demonstrate how this is a natural topology to use in the queueing context.} used in \cite{Wisch01} for our analysis.

Let $\D\subset \{x:\N \mapsto \R_+\}$ denote the space of non-negative sequences such that $\sup_{t\in \N} \left|\frac{x(0,t]}{t}\right| <+\infty$, and let $\D^K$ be the $K$ cartesian product of $\D$. Let $||\cdot||_u$ be the scaled uniform norm on $\D$, i.e., $ ||x||_u := \sup_{t\in \N} \left|\frac{x(0,t]}{t}\right|$ for all $x\in \D$, while for all $a = (a^k, k\in \K) \in \D^K$, where $a^k \in \D$, the scaled uniform norm of $a$ is 
$ ||a||_u := \max_{k\in\K} ||a^k||_u$. The space $\D$ is metrizable via the scaled uniform norm $||\cdot||_u$, i.e., for all $x,y\in\D$, the distance between them is $||x-y||_u=\sup_{t\in \N} \left| \frac{x(0,t]-y(0,t]}{t} \right|$. 
Define a subspace $\D_\mu$ of $\D$ which contains all the arrival paths whose average arrival rate is equal to the expected rate $\mu$, i.e., $\D_{\mu} := \left\{x\in \D: \lim_{t\rightarrow \infty} \frac{x(0,t]}{t} = \mu \right\}$. Also for a vector $\muv =(\mu_1, \ldots, \mu_K)$ 
define $\D^K_\muv$ to be the product space of $\D_{\mu^k}$ for all $k\in\K$. We equip $\D_\mu$ and $\D^K_\muv$ with the appropriate subspace and product topologies \cite{Munkr00}. 
For finite dimensional metric spaces like $\R^n_+$, $n\in \N$, we use the square uniform topology (sames as the product topology) with the square metric $\rho$ \cite{Munkr00}, where $\rho(\xv,\yv) := \max_{i\in \{1,\ldots,n\}} |x^i-y^i|$. From \cite{Wisch01,Ganes04} it is also clear that the scaled uniform norm topology is stronger than the point-wise convergence topology, hence the projection and shift operators are continuous under the scaled uniform norm topology.

\subsection{Large Deviations Principle} 
The following definition of a large deviations principle is taken from \cite{Wisch01}. For a complete reference to the theory, definitions, and tools, see \cite{Dembo98}.

\begin{defn}[Large deviations principle] \label{defn:LDP} 
A sequence of random variables $X^L$ in a Hausdorff space $\mathcal X$ with $\sigma$-algebra $\mathcal B$ is said to satisfy a large deviations principle (LDP)\footnote{Often $\mathcal{B}$ is taken to be the Borel $\sigma$-algebra, and a rate function is, by definition, non-negative and lower semicontinuous.} with good rate function $I$ if, for any $B\in \mathcal B$, 
\begin{align} 
\begin{split}
  -\inf_{x\in B^o} I(x) & \le \liminf_{L\rightarrow \infty} \frac{1}{L} \log P(X^L \in B)
\le \limsup_{L\rightarrow \infty} \frac{1}{L} \log P(X^L \in B) \le -\inf_{x\in \bar{B}} I(x),
\end{split}
\end{align} 
where $B^o$ and $\bar B$ are the interior and the closure of $B$, respectively, and if the rate function $I:\mathcal{X} \mapsto \R_+ \cup \{\infty\}$ has compact level sets, where the level sets are defined as $\{x: I(x) \le \alpha\}$, for $\alpha \in \R$.

If $X^L$ is a mapping from $\N$ to $\R$ describing the sample path of a random sequence, the LDP is referred to as a \emph{sample path} LDP. 
\end{defn}


\subsection{Garcia's Extended Contraction Principle}
The contraction principle (see \cite[p. 126]{Dembo98}) says that if we have an LDP for a sequence of random variables, we can effortlessly obtain LDP's for a whole other class of random sequences that are obtained via continuous transformations. 
However, due to the inherent discontinuity in the max-weight scheduling function, this (regular)
contraction principle fails to provide sufficient structure in the setting of our interest. 
Instead, we will utilize the following powerful extension of the contraction principle for quasi-continuous transformations on metric spaces, given by Garcia \cite{Garci04}. 
Garcia's extended contraction principle~\cite[Theorem~1.1]{Garci04} then says the following:
\begin{fact}
\label{thm:Garcia} 
Assume that $\Omega \stackrel{X^L}{\rightarrow} \XX \stackrel{F}{\rightarrow} \Y$, $\XX, \Y$ are metric spaces, and $\{X^L\}$ satisfies an LDP in $\XX$ with rate function $\Ish$. If at every $x$ with $\Ish(x) < \infty$, the following hold:
\begin{enumerate}
\item $F$ is almost compact;  and
\item for all $y\in\sideset{^x}{}\letF$, there exists a sequence $\{x_n\}$ converging to $x$ such that 
$F(x_n) \rightarrow y$, $F$ is continuous at $x_n$, and $\Ish(x_n) \rightarrow \Ish(x)$,
\end{enumerate}
then $\{F(X^L)\}$ satisfies an LDP with rate function given by 
\beq I(y) = \inf \left\{\Ish(x): y \in \sideset{^x}{}\letF  \right\}.\eeq
\end{fact}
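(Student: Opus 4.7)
The plan is to adapt the standard argument for the contraction principle, using conditions (1) and (2) to patch the two failures that arise when $F$ is only quasi-continuous: the preimage of a closed set need not be closed, and the preimage of an open set need not be open. Almost compactness (condition 1) will handle the upper bound via cluster sets, while the continuity-point approximation (condition 2) will handle the lower bound.

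First I would verify that $I(y):=\inf\{\Ish(x):y\in\sideset{^x}{}\letF\}$ is a good rate function. Given $\{y_n\}$ with $I(y_n)\le\alpha$, choose $x_n$ with $\Ish(x_n)\le\alpha+1/n$ and $y_n\in\sideset{^{x_n}}{}\letF$. By compactness of the level sets of $\Ish$ I would extract $x_{n_k}\to x$ with $\Ish(x)\le\alpha$; a diagonal choice of points $z_k$ near $x_{n_k}$ with $F(z_k)$ near $y_{n_k}$ then gives $z_k\to x$, and almost compactness of $F$ at $x$ extracts a further subsequence along which $F(z_{k_j})\to y$. Consequently $y_{n_{k_j}}\to y$ and $y\in\sideset{^x}{}\letF$, so $I(y)\le\alpha$.

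For the upper bound on a closed set $B\subset\Y$, I would show $\overline{F^{-1}(B)}\subset\{x:\sideset{^x}{}\letF\cap B\neq\emptyset\}$: any $x\in\overline{F^{-1}(B)}$ admits $x_n\to x$ with $F(x_n)\in B$, and almost compactness forces a subsequence of $F(x_n)$ to cluster at some $y$, necessarily in $B$ and hence in $\sideset{^x}{}\letF\cap B$. Combining this inclusion with the LDP upper bound for $\{X^L\}$,
\begin{align*}
\limsup_{L\to\infty}\frac{1}{L}\log P(F(X^L)\in B)\le -\inf_{x\in\overline{F^{-1}(B)}}\Ish(x)\le -\inf_{y\in B}I(y),
\end{align*}
where the last inequality is the interchange of infima $\inf_{y\in B}I(y)=\inf\{\Ish(x):\sideset{^x}{}\letF\cap B\neq\emptyset\}$.

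For the lower bound on an open set $B$, I would fix $y\in B$ with $I(y)<\infty$, pick $x$ with $y\in\sideset{^x}{}\letF$ and $\Ish(x)$ within $\epsilon$ of $I(y)$, and apply condition (2) to obtain $x_n\to x$ along which $F(x_n)\to y$, $F$ is continuous at each $x_n$, and $\Ish(x_n)\to\Ish(x)$. For large $n$, $F(x_n)\in B$, and continuity at $x_n$ yields an open $U_n\ni x_n$ with $F(U_n)\subset B$; the LDP lower bound for $\{X^L\}$ on $U_n$ gives $\liminf_L\tfrac{1}{L}\log P(F(X^L)\in B)\ge -\inf_{x'\in U_n}\Ish(x')\ge -\Ish(x_n)$, and sending $n\to\infty$, then $\epsilon\to 0$, and optimizing over $y$ produces $-\inf_{y\in B}I(y)$. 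The main obstacle is this lower bound: quasi-continuity by itself is insufficient, because the approximating continuity points could carry arbitrarily large $\Ish$-values; it is exactly the rate-function convergence $\Ish(x_n)\to\Ish(x)$ built into hypothesis (2) that stops the $\Ish$-infimum over $U_n$ from inflating and makes the approximation pay off.
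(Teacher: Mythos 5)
This statement is quoted in the paper as a Fact imported verbatim from Garcia \cite[Thm.~1.1]{Garci04}; the paper supplies no proof of its own, so there is no internal argument to compare yours against. Your proposal is, as far as I can check, a correct self-contained proof, and it follows the natural route: the standard contraction-principle template, with almost compactness repairing the upper bound (a cluster point of $F(x_n)$ for $x_n\to x\in\overline{F^{-1}(B)}$ with $F(x_n)\in B$ lands in $\sideset{^x}{}\letF\cap B$, whence $\inf_{\overline{F^{-1}(B)}}\Ish\ge\inf_{B}I$ after the interchange of infima) and hypothesis (2) repairing the lower bound (continuity points $x_n$ supply open sets $U_n$ with $F(U_n)\subset B$, and the convergence $\Ish(x_n)\to\Ish(x)$ is precisely what keeps the cost of retreating to a continuity point from inflating). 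The goodness argument via the diagonal sequence $z_k\to x$ and almost compactness at the limit point is also sound. Two small points of hygiene. First, the inclusion $\overline{F^{-1}(B)}\subset\{x:\sideset{^x}{}\letF\cap B\neq\emptyset\}$ is justified by your argument only at points $x$ with $\Ish(x)<\infty$, since almost compactness is hypothesized only there; this is harmless because points of infinite rate contribute nothing to $\inf_{\overline{F^{-1}(B)}}\Ish$, but it should be said. Second, writing $P(F(X^L)\in B)=P(X^L\in F^{-1}(B))$ presupposes measurability of $F^{-1}(B)$ for a merely quasi-continuous $F$, which deserves a word (or a retreat to inner/outer probabilities); only the closure $\overline{F^{-1}(B)}$ and the open sets $U_n$ are automatically usable in the LDP bounds for $\{X^L\}$.
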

\begin{note}
Whenever $\Ish(\cdot)$ is continuous, then the second condition is reduced to confirming 
that $F$ is a quasi-continuous function.
\end{note}

\section{Assumptions and Overview of the Results}\label{sec:overview}

Garcia's extended contraction principle together with Lemma~\ref{lem:cont_H} suggests 
the following road map to obtaining an LDP for the finite and infinite horizon workload processes under
max-weight scheduling. 
The large deviation property for the sequences of finite- and infinite-horizon workloads would follow as a direct consequence of the sample-path LDP of the arrival process, as soon as one establishes
the quasi-continuity and almost compactness of the mappings $G_t$, $G^{\mathrm{wc}}_t$, and
$G^{\mathrm{wc}}$ along with some continuity properties of the rate function obtained from the sample-path LDP assumption on the arrival processes. 
As a result, our first task is to restrict our attention to arrival streams that satisfy the
sample-path LDP as stated by Assumptions~\ref{Assumption1}-\ref{Assumption2} 
in Section~\ref{assumptions}. There we also discuss a family of 
arrival processes  which satisfies these assumptions.


\subsection{Sample Path LDP of Arrival Processes}\label{assumptions}
The following sample path LDP for the sequence of arrival processes $\Av^L$ is the starting point of our analysis.


\begin{assumption}[Many-sources sample path LDP] \label{Assumption1}
The sequence $\{\Av^L\}$ satisfies a sample path LDP in $\D^K_{\boldsymbol{\mu}}$ equipped with the scaled uniform topology with rate function $\Ish$, where 
the rate function $\Ish$ is given as \beq \label{eq:defn_I} \Ish(\av) := \sup_{t \in \N} \Ish_t(\av|_{(0,t]}) = \lim_{t\rightarrow \infty} \Ish_t(\av|_{(0,t]})\eeq for $\av\in \D^K_{\boldsymbol{\mu}}$, where for every $t\in\N$ we also assume that $\{\Av^L|_{(0,t]}\}$ satisfies an LDP with rate function $\Ish_t(\cdot)$ (in the product topology). 
\end{assumption}

\begin{note} 
The most general conditions for Assumption~\ref{Assumption1} to be satisfied are given in \cite[Thm. 3]{Wisch01} (also stated in \cite[Thm. 7.1, pg. 156]{Ganes04}). There it is also shown how several standard stationary processes used for traffic modeling, such as \emph{i.i.d.} increment processes, Markov-modulated, a general class of Gaussian, and fractional Brownian processes (for long-range dependent or heavy-tailed traffic), satisfy Assumption~\ref{Assumption1}. The conditions of  \cite[Thm. 3]{Wisch01} also imply that the sequence $\{A^L\}$ also satisfies an LDP on $\D^K$ equipped with the scaled uniform topology, with rate function $\Ish$ where $\Ish(a) = \infty$ for $a \in \D^K / \D^K_{\boldsymbol{\mu}}$ \cite{Wisch01,Ganes04}. Finally, it is shown in 
\cite[Lemma 7.8]{Ganes04} that under the conditions of  \cite[Thm. 3]{Wisch01},  for all $t\in\N$ we have $\Ish_t(\muv\otimes\ones_{t}) = 0$ and also that $\Ish_t(\cdot)$ is convex. 
\end{note}

In this paper, we further assume the following continuity conditions on the rate functions: 
\begin{assumption} \label{Assumption-cont} 
We assume that $\Ish_t(\cdot)$ is continuous in the product topology on $\Re_+^{K\times t}$.
\end{assumption}

\begin{assumption} \label{Assumption2}
For every point $x$ in the effective domain of $\Ish$, i.e., $\{\av \in \D_{\boldsymbol{\mu}}^K: \Ish(\av) < +\infty\}$, we assume that for every sequence $\{\av^n\}$ converging to $\av$ in $\D^K_{\boldsymbol{\mu}}$ such that there exists $t\in \N$ so that $\av^n_s=\av_s$ for all $s > t$ (for all $n$), we have $\Ish(\av^n)\rightarrow \Ish(\av)$.  
\end{assumption}

\begin{note}\label{remcont}
Assumption~\ref{Assumption2} is a manifestation of a mixing condition that provides a certain independence of the long-term behaviour of the process with respect to any finite initial block/window.\end{note}

In Proposition~\ref{prop1} below we demonstrate that processes with \emph{i.i.d.} increments naturally satisfy Assumptions~\ref{Assumption1}-\ref{Assumption2}. Before proving this we define a coercive function as follows. 
\begin{defn}
A function $f:\R_+\mapsto\R_+\cup\{+\infty\}$ with domain $\mathrm{Dom}(f):=\{x: f(x) <+\infty\}$ is defined to be \emph{coercive} if for every $y\in\R_+$, there is compact set $\mathcal{C}\subsetneq \mathrm{Dom}(f)$ such that $f(x) > y$ for all $x\in \mathrm{Dom}(f)\setminus \mathcal{C}$.
\end{defn}


\begin{prop}\label{prop1}
For scale parameter $L$ assume that the arrival process has \emph{i.i.d.} increments while the arrival processes for different users are independent. Furthermore, assume that for every $k$, 
$\{A_1^{k,L}\}$ satisfies the conditions of the G\"artner-Ellis Theorem~\cite[Thm. 2.3.6]{Dembo98} and that the limiting rate function is either coercive or has domain $\Re_+$. 
Then Assumptions~\ref{Assumption1}-\ref{Assumption2} hold. 
\end{prop}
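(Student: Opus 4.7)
The plan is to verify each of Assumptions~\ref{Assumption1}, \ref{Assumption-cont}, \ref{Assumption2} in turn, exploiting the additive structure of the rate functions that emerges under i.i.d.\ increments and cross-user independence. The starting point is a finite-dimensional LDP for each window $(0,t]$: the G\"artner--Ellis hypothesis gives each sequence $\{A_1^{k,L}\}$ an LDP on $\R_+$ with convex, lower semicontinuous rate function $\Lambda_k^*$, and combining this with independence of increments across time and across users (tensorization of LDPs, e.g.\ \cite[Ex.~4.2.7]{Dembo98}) yields an LDP for $\Av^L|_{(0,t]}$ in $\R_+^{K\times t}$ with
\[
\Ish_t(\av|_{(0,t]}) \;=\; \sum_{s=1}^t\sum_{k=1}^K \Lambda_k^*(a_s^k).
\]
Since every summand is non-negative, $\Ish_t$ is non-decreasing in $t$; this supplies the $\sup=\lim$ identity in Assumption~\ref{Assumption1} once the sample-path LDP is in hand.

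For Assumption~\ref{Assumption-cont}, continuity of a finite sum over disjoint coordinates reduces to continuity of each $\Lambda_k^*$ on $\R_+$ as an $[0,+\infty]$-valued function. In the case $\mathrm{Dom}(\Lambda_k^*)=\R_+$, convexity gives continuity on $(0,\infty)$, and continuity at $0$ follows from the convex-combination bound $\Lambda_k^*(\epsilon)\le(1-\epsilon)\Lambda_k^*(0)+\epsilon\Lambda_k^*(1)$ paired with the LSC lower bound. In the coercive case, the paper's definition of coercivity forces $\Lambda_k^*$ to blow up at the inner boundary of its effective domain; together with the convention $\Lambda_k^*\equiv+\infty$ outside the domain this gives continuity of $\Lambda_k^*$ as an extended-real function.

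For Assumption~\ref{Assumption1} it remains to promote the finite-dimensional LDPs to a sample-path LDP in the scaled uniform topology, which I would do in two substeps. First, the family $\{\Ish_t\}$ is consistent under canonical projections, so the Dawson--G\"artner projective-limit theorem yields a sample-path LDP for $\{\Av^L\}$ on $\D^K$ endowed with the pointwise product topology, with rate function $\Ish(\av)=\sup_t\Ish_t(\av|_{(0,t]})$. Second, to upgrade to the scaled uniform topology, I would verify exponential tightness in that finer topology: produce, for every $\alpha>0$, a compact $K_\alpha\subset\D^K$ with $\limsup_L (1/L)\log P(\Av^L\notin K_\alpha)\le-\alpha$. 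For i.i.d.\ increments the needed tail estimate on $\sup_{t\ge T}|\av(0,t]/t-\muv|$ comes from a Chernoff bound combined with a Doob-type maximal inequality for the exponential martingale over blocks of geometrically increasing length --- these are precisely the ingredients behind the conditions of \cite[Thm.~3]{Wisch01}. Exponential tightness together with the product-topology LDP then gives the desired LDP in the scaled uniform topology, with effective domain automatically contained in $\D^K_\muv$.

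Finally, to verify Assumption~\ref{Assumption2}, fix $\av$ with $\Ish(\av)<+\infty$ and a sequence $\av^n\to\av$ in $\D^K_\muv$ with $\av^n_s=\av_s$ for all $s>t$. Scaled-uniform convergence controls cumulative sums at every fixed index and therefore forces pointwise convergence $\av^n_s\to\av_s$ of each increment with $s\le t$. Under i.i.d.\ increments, $\Ish(\av)=\sum_{s\ge1}\sum_k\Lambda_k^*(a_s^k)$ and the two sums agree past index $t$, so
\[
\Ish(\av^n)-\Ish(\av) \;=\; \sum_{s=1}^{t}\sum_{k=1}^K \bigl[\Lambda_k^*(a_s^{k,n})-\Lambda_k^*(a_s^k)\bigr] \;\longrightarrow\; 0
\]
by the continuity of $\Lambda_k^*$ from the second step. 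The main technical obstacle is the exponential-tightness upgrade in the scaled uniform topology; everything else reduces to manipulating the explicit additive form of $\Ish_t$ produced by the i.i.d.\ increments assumption.
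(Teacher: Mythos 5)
Your proposal is correct and follows essentially the same route as the paper: the additive form of $\Ish_t$ and $\Ish$ under \emph{i.i.d.} increments, continuity of $\Lambda^{*}$ from convexity plus either coercivity or full domain $\R_+$ for Assumption~\ref{Assumption-cont}, and the telescoping identity $\Ish(\av^n)-\Ish(\av)=\Ish_t(\av^n|_{(0,t]})-\Ish_t(\av|_{(0,t]})$ for Assumption~\ref{Assumption2}. The only difference is that where the paper obtains the sample-path LDP of Assumption~\ref{Assumption1} by directly citing \cite[Thm.~3]{Wisch01}, you sketch its internal mechanism (Dawson--G\"artner plus an exponential-tightness upgrade to the scaled uniform topology), which is the same argument unpacked rather than a distinct one.
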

\begin{proof}
The fact the above class of processes satisfy Assumption~\ref{Assumption1} is immediate from \cite[Thm. 3]{Wisch01}. Similarly the G\"artner-Ellis Theorem~\cite[Thm. 2.3.6]{Dembo98} yields convexity and from the coercivity of the rate function or with its domain being $\R_+$, Assumption~\ref{Assumption-cont} follows. When the arrival process has \emph{i.i.d.} increments, then it also follows that for 
$\yv \in \mathbb{R}_+^{k\times t}$ and  $\xv \in \D^{K}$, 
\begin{align*}
\Ish_t(\yv)=\sum_{k=1}^K \sum_{s=1}^{t} \Lambda_1^{*,k}(y^k_s),
\end{align*}
and
\begin{align*}
\Ish(\xv)=\sum_{k=1}^K \sum_{s=1}^{+\infty} \Lambda_1^{*,k}(x^k_s),
\end{align*}
where $\Lambda_1^{*,k}$ is the Fenchel-Legendre transform of $\Lambda_1^k(\theta):=\lim_{L\rightarrow\infty}\frac{1}{L} \log Ee^{\theta L A_1^{k,L}}$.
Note that that $\Ish(\xv)=+\infty$ if $\xv\in \D^K \setminus \D^K_{\muv}$. 

Now for every sequence $\{\xv^n\}$ converging to $\xv$ in $\D^K_{\boldsymbol{\mu}}$ with $\Ish(\xv)<+\infty$ such that there exists $t\in \N$ so that $\xv^n_s=\xv_s$ for all $s > t$ (for all $n$), we have $\Ish(\xv^n)<+\infty$ for all $n$ large enough and 
\begin{align*}
\Ish(\xv^n) - \Ish(\xv) =\Ish_t(\xv^n|_{(0,t]}) - \Ish_t(\xv|_{(0,t]}) \xrightarrow[n\rightarrow+\infty]{} 0.
\end{align*}
\end{proof}
\begin{note}
A more general characterization, especially of Assumption~\ref{Assumption2}, remains an important area of future work. However, 
to provide insight about Assumptions~\ref{Assumption-cont}-\ref{Assumption2}
and their relationship to Assumption~\ref{Assumption1} we provide the following simple 
example. Our example will be constructed as averages of $L$ stationary \emph{i.i.d.} random processes so it is sufficient to describe the underlying stochastic process: We let the random process be $X_k = \gamma + \gamma V$ when $k$ is odd and $X_k = \gamma - \gamma V$ when $k$ is even, where
$V$ is a $\mathrm{Uniform}[-1,1]$ random variable and $\gamma > 0$ is the long-term mean for all our sequences. Here it can be verified that $I(x)<+\infty$ if and only if $x_k=\gamma(1-\alpha)$ if $k$ is even and $x_k=\gamma(1+\alpha)$ for $k$ odd where $\alpha\in[-1,1]$. It can also be verified that $I(x)=0$ for the all-$\gamma$ sequence, i.e., $x_k\equiv \gamma$. However, 
it is clear that one can easily construct sequences as required in 
Assumption~\ref{Assumption2} with the rate function being infinite which, 
nevertheless, converge to the all-$\gamma$ sequence. In other words, 
Assumptions~\ref{Assumption-cont}-\ref{Assumption2} require more than 
the regularity conditions from \cite{Wisch01,Ganes04} that guarantee 
validity of Assumption~\ref{Assumption1}. 
\end{note}

Given the above assumptions on the arrival processes, we are now ready to 
provide an overview of the main results
of the paper.

\subsection{Main Results: An Overview}

Assuming that the sequence of the arrival processes $\{\Av^L\}$ satisfies a 
many-sources sample-path LDP with a ``continuous" rate function 
(Assumptions~\ref{Assumption1}-\ref{Assumption2}), 
LDPs for the finite and infinite-horizon workloads will be a direct consequence of 
Garcia's extended contraction principle once the required quasi-continuity and almost
compactness properties are shown. We will demonstrate that the quasi-continuity and almost compactness of the finite horizon workload mappings are inherited from the quasi-continuity of the schedulers
$H$ and $H^{\mathrm{wc}}$.

\begin{thm} \label{thm:FiniteHorizonLDP}
Under Assumptions~\ref{Assumption1}-\ref{Assumption-cont} and for all $t\in \N$,
 the sequence of the finite-horizon workloads under a max-weight 
scheduler $\{\Wv_{0,t}^L:= G_t(\Av^L|_{(0,t]})\}$
satisfies an LDP on $\R^K_+$ with the rate function $I_t$, where for $\bv \in \R^K_+$ \beq I_t(\bv) = \inf_{\xv \in \R^{K\times t}_+: \sideset{^\xv}{_t}\letG \ni \bv}  \Ish_t(\xv).\label{eq:defnIt} \eeq 
\end{thm}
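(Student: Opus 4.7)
The plan is to apply Garcia's extended contraction principle (Fact~\ref{thm:Garcia}) to the finite-dimensional map $G_t : \R^{K\times t}_+ \to \R^K_+$. By Assumption~\ref{Assumption1} the restricted arrival vector $\Av^L|_{(0,t]}$ inherits an LDP on $\R^{K\times t}_+$ with rate function $\Ish_t$; combined with Assumption~\ref{Assumption-cont} (continuity of $\Ish_t$), the remark following Fact~\ref{thm:Garcia} collapses the two hypotheses of Garcia's theorem to (i) almost compactness of $G_t$ and (ii) quasi-continuity of $G_t$.

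Almost compactness will follow from the elementary coordinate-wise bound $G_t(\xv) \le \sum_{s=1}^{t} \Av_s$, obtained by iterating the dynamics~\eqref{eq:qdynamic} and discarding the non-negative service term inside $[\cdot]^+$. This bound means $G_t$ sends bounded sets to bounded sets, so for any $\xv_n \to \xv$ in $\R^{K\times t}_+$ the images $\{G_t(\xv_n)\}\subset \R^K_+$ are contained in a compact set and Bolzano--Weierstrass delivers the required convergent subsequence.

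The main obstacle will be quasi-continuity of $G_t$. My plan is to introduce the one-step update $g(\wv):=[\wv-H(\wv)]^+$ and first verify that it is quasi-continuous on $\R^K_+$: by Lemma~\ref{lem:cont_H} the scheduling map $H$ is quasi-continuous, hence so is $\wv\mapsto(\wv,H(\wv))$, and composing with the jointly continuous map $(\wv,\rv)\mapsto [\wv-\rv]^+$ yields quasi-continuity of $g$ via Remark~\ref{Rem:CompositionQuasi}. Unrolling the dynamics, $G_t(\Av_t,\ldots,\Av_1)$ is the $t$-fold recursion $\Wv_{s-1}=g(\Wv_s)+\Av_s$ started from $\Wv_t=\mathbf{0}$. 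Because a composition of two quasi-continuous maps need not be quasi-continuous, I cannot chain Remark~\ref{Rem:CompositionQuasi} $t$ times; instead I will build the approximating sequence by hand. Given a target $\xv=(\Av_t,\ldots,\Av_1)$ with nominal intermediate workloads $\Wv_{t-1},\ldots,\Wv_1$, I will march forward in time and at each step invoke quasi-continuity of $g$ at $\Wv_s$ (Definition~\ref{defn:quasi-continuity_condition}) to select a perturbation $\Wv_s^n\to\Wv_s$ at which $g$ is continuous and along which $g(\Wv_s^n)\to g(\Wv_s)$. Setting $\Av_s^n := \Wv_{s-1}^n-g(\Wv_s^n)$ then produces arrivals converging to $\Av_s\ge\mathbf{0}$, and because each $\Wv_s^n$ is a continuity point of $g$, the perturbed sample path is a continuity point of $G_t$. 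The delicate issue I will need to handle is preserving $\Av_s^n\ge\mathbf{0}$ in coordinates where $\Av_s$ already vanishes; I plan to control this by shrinking the perturbation scale uniformly across the $t$ steps and, where necessary, aligning $\Wv_{s-1}^n$ with $g(\Wv_s^n)$ along saturated coordinates while still landing on a continuity point of $g$ (density of such points in any open neighborhood is immediate from quasi-continuity).

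Once (i) and (ii) are established, Fact~\ref{thm:Garcia} yields an LDP for $\{G_t(\Av^L|_{(0,t]})\}$ on $\R^K_+$ with good rate function $I_t(\bv)=\inf\{\Ish_t(\xv):\sideset{^\xv}{_t}\letG\ni\bv\}$, which is exactly~\eqref{eq:defnIt}.
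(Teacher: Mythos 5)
Your overall skeleton matches the paper's: invoke Garcia's extended contraction principle, observe that Assumption~\ref{Assumption-cont} reduces its two hypotheses to almost compactness plus quasi-continuity of $G_t$, and prove almost compactness from the bound $G_t(\av|_{(0,t]})\le \av(0,t]$ together with Bolzano--Weierstrass --- that last part is exactly Lemma~\ref{lem:almostcompact}. The gap is in the quasi-continuity step. Your repair of the non-negativity constraint does not work as stated: at step $s$ you need a continuity point $\Wv_{s-1}^n$ of $g$ satisfying three things simultaneously, namely $\Wv_{s-1}^n\to\Wv_{s-1}$, $g(\Wv_{s-1}^n)\to g(\Wv_{s-1})$, and $\Wv_{s-1}^n\ge g(\Wv_s^n)$ so that $\Av_s^n\ge\mathbf{0}$. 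Quasi-continuity of $g$ at $\Wv_{s-1}$ hands you one particular sequence with the first two properties and gives no control over its direction of approach; it may approach from precisely the side that violates the third constraint in a coordinate where $A_s^k=0$. Your fallback, density of continuity points, is insufficient because an arbitrary nearby continuity point $w$ can have $g(w)$ far from $g(\Wv_{s-1})$ --- that is exactly what the discontinuity of $H$ permits --- so ``aligning along saturated coordinates'' may destroy the convergence $g(\Wv_{s-1}^n)\to g(\Wv_{s-1})$. The paper confronts this same issue for the work-conserving map in Appendix~\ref{sec:app2} and needs a two-case argument in which the perturbation is chosen in directions dictated by the argmax structure of the scheduler (see Remark~\ref{remark7}); nothing in your sketch supplies that.

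There is also a structural mismatch. For the plain max-weight map $G_t$ of Theorem~\ref{thm:FiniteHorizonLDP}, the paper does not iterate a fixed selection $H$ at all. Lemma~\ref{lem:quasic_mw} defines the correspondence $\mathcal{G}_t$ of all workloads achievable using \emph{any} maximizer in $\arg\max_{\Rvec\in\RR}\langle\Rvec,\Wv\rangle$ at each step, proves by induction that it is outer semicontinuous and locally bounded (hence upper semicontinuous with compact values), and then extracts a quasi-continuous selection via the selection theorems of Matejdes and Cao--Moors/Cazacu--Lawson; the theorem is stated for that selection and its cluster sets $\sideset{^\xv}{_t}\letG$. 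The paper explicitly cautions that this selection need not satisfy the one-step recursion with $H$, i.e.\ the map you propose to analyse (the $t$-fold iteration of $g(\wv)=[\wv-H(\wv)]^+$) may fail to be quasi-continuous. So beyond the unproved estimates, your plan targets a possibly different --- and possibly non-quasi-continuous --- function than the one for which the theorem is actually established.
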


\begin{thm} \label{thm:FiniteHorizonLDP-NWC}
Under Assumptions~\ref{Assumption1}-\ref{Assumption-cont} and for all $t\in \N$,
 the sequence of the finite-horizon workloads under the work-conserving max-weight 
scheduler, 
$\{\Wv_{0,t}^L:= G^{\mathrm{wc}}_t(\Av^L|_{(0,t]})\}$, satisfies an LDP on $\R^K_+$ with the rate function $I_t$, where for $\bv \in \R^K_+$ \beq I_t(\bv) = \inf_{\xv \in \R^{K\times t}_+: \sideset{^\xv}{^{\mathrm{wc}}_t}\letG \ni \bv}  \Ish_t(\xv).\label{eq:defnItgen} \eeq 
\end{thm}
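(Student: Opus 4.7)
The plan is to apply Garcia's extended contraction principle (Fact~\ref{thm:Garcia}) to the sequence $\{\Av^L|_{(0,t]}\}$, which by Assumption~\ref{Assumption1} satisfies an LDP in $\R_+^{K\times t}$ (under the product topology) with good rate function $\Ish_t$, composed with the finite-horizon work-conserving workload map $G^{\mathrm{wc}}_t : \R_+^{K\times t} \to \R_+^K$. The asserted formula~(\ref{eq:defnItgen}) is precisely the rate function produced by Fact~\ref{thm:Garcia}, so the whole proof reduces to verifying its two hypotheses at every $\xv$ with $\Ish_t(\xv)<\infty$. Under Assumption~\ref{Assumption-cont} the rate function $\Ish_t$ is continuous on all of $\R_+^{K\times t}$, so by the remark following Fact~\ref{thm:Garcia} the second hypothesis collapses to the purely analytic statement that $G^{\mathrm{wc}}_t$ is quasi-continuous.

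I would dispose of almost compactness first. Iterating the recursion~(\ref{eq:qdynamic}) and dropping both the $[\cdot]^+$ and the nonnegative service gives the crude coordinatewise bound $G^{\mathrm{wc}}_t(\xv) \le \sum_{s=1}^{t}\xv_s$, so $G^{\mathrm{wc}}_t$ sends bounded sets to bounded sets. For any convergent sequence $\xv_n\to\xv$ in $\R_+^{K\times t}$ the images $\{G^{\mathrm{wc}}_t(\xv_n)\}$ are then bounded in $\R_+^K$ and admit a convergent subsequence by Bolzano--Weierstrass, so almost compactness holds everywhere.

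The core of the proof is quasi-continuity of $G^{\mathrm{wc}}_t$. I would write $G^{\mathrm{wc}}_t$ as the $t$-fold iteration of the one-step update $\Psi(\Wv,\av) := [\Wv - H^{\mathrm{wc}}(\Wv)]^+ + \av$ started from $\Wv=\mathbf{0}$. Because $H^{\mathrm{wc}}$ is quasi-continuous by Lemma~\ref{lem:cont_H} and the remaining arithmetic inside $\Psi$ is continuous, Remark~\ref{Rem:CompositionQuasi} both makes $\Psi$ itself quasi-continuous and, more usefully, makes $\Psi$ continuous at every $(\Wv,\av)$ at which $H^{\mathrm{wc}}$ is continuous at $\Wv$. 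Quasi-continuity of $G^{\mathrm{wc}}_t$ is then established by induction on $t$: given the target arrival path $(\av_1,\ldots,\av_t)$ producing the workload trajectory $\mathbf{0}=\Wv_t,\Wv_{t-1},\ldots,\Wv_0$, I construct approximating paths $(\av_1^n,\ldots,\av_t^n)\to(\av_1,\ldots,\av_t)$ along which every intermediate workload $\Wv_s^n$ for $0<s<t$ lies in the continuity set of $H^{\mathrm{wc}}$. The engine of the construction is that for any fixed current workload $\Wv$, the partial map $\av\mapsto \Psi(\Wv,\av)$ is a pure translation of $\av$. Hence, having steered the trajectory into the continuity set up through time $t-s$, I can invoke quasi-continuity of $H^{\mathrm{wc}}$ at the next target workload $\Wv_{t-s-1}$ to pick a nearby $\Wv_{t-s-1}^n$ in that continuity set and then undo the translation to obtain the corresponding $\av_{t-s}^n$. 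A small amount of care is needed near the nonnegativity boundary $\av^k=0$, which is handled by choosing the perturbed next-workload to dominate the translation offset $[\Wv_{t-s}^n-H^{\mathrm{wc}}(\Wv_{t-s}^n)]^+$, which itself converges to its unperturbed value because the iterates are chosen in the continuity set of $H^{\mathrm{wc}}$.

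The hard part is precisely this inductive construction: even though $H^{\mathrm{wc}}$ is quasi-continuous, feeding a quasi-continuous function into itself can destroy quasi-continuity (cf.\ the cautionary half of Remark~\ref{Rem:CompositionQuasi}), so no black-box closure lemma is available. What rescues the argument is the translation structure of $\Psi$ in its arrival argument, which lets me steer each intermediate workload off the discontinuity locus of $H^{\mathrm{wc}}$ using only arbitrarily small perturbations of the corresponding $\av_s$. Once quasi-continuity and almost compactness of $G^{\mathrm{wc}}_t$ are both in hand, Fact~\ref{thm:Garcia} yields the claimed LDP with rate function~(\ref{eq:defnItgen}). The argument is structurally identical to that for Theorem~\ref{thm:FiniteHorizonLDP}, with $H^{\mathrm{wc}}$ replacing $H$ throughout; Lemma~\ref{lem:cont_H} provides the requisite quasi-continuity of either selection.
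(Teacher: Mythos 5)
Your proposal follows essentially the same route as the paper: Garcia's extended contraction principle with Assumption~\ref{Assumption-cont} reducing its second hypothesis to quasi-continuity, almost compactness via the bound $G^{\mathrm{wc}}_t(\xv)\le \xv(0,t]$ and Bolzano--Weierstrass, and quasi-continuity of $G^{\mathrm{wc}}_t$ by induction on $t$, exploiting the translation (linear) dependence of the one-step update on the most recent arrival to steer every intermediate workload into the continuity set of $H^{\mathrm{wc}}$ along a single common approximating sequence. Your treatment of the nonnegativity boundary is sketchier than the paper's explicit two-case analysis (the paper first perturbs the zero components of $\av_1$ upward in a way compatible with the scheduling decision, then diagonalizes), but the underlying idea is the same.
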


The quasi-continuity and almost compactness of the stationary workload mapping, however, 
requires slightly more work as shown in Section~\ref{sect:LDP_workloads}.  
In fact, unlike the finite-horizon LDP results of Theorems~\ref{thm:FiniteHorizonLDP} and \ref{thm:FiniteHorizonLDP-NWC}, which hold for general rate regions 
and for both max-weight 
and its work conserving version, the infinite horizon 
LDP result of Theorem~\ref{thm:InfiniteHorizonLDP} is established only under the 
work conserving max-weight policy and only with a simplex rate region. For a vector $\xv\in\R^K_+$ 
define $\hat{\xv}:=\sum_{k=1}^K x^k/C^k$, then the simplex rate region is given by,
\beq 
\RR_s:= \left\{\rv \in \R^K_+: \hat{\rv} \le 1 \right\}.\label{simplex} 
\eeq

\begin{thm} \label{thm:InfiniteHorizonLDP}
Consider a work conserving max-weight scheduler with a simplex rate region $\RR_s$.
Let $\vect{\mu} \in \mathbb{R}^K$ be an admissible arrival rate vector strictly inside
this simplex rate region, i.e., 
$\vect{\mu} \in \mbox{int}(\RR_s)$ 
and $\{\Av^L \}$ be a sequence of 
arrival processes that satisfies  Assumptions~\ref{Assumption1}-\ref{Assumption2}
in $\D^K_{\vect{\mu}}$.  
The sequence of infinite-horizon workloads $\{\mcW^L := G^{\mathrm{wc}}(\Av^L)\}$ satisfies an LDP on $\R^K_+$ with good rate function $J$, where for $\bv \in \R^K_+$ \beq J(\bv) = \inf_{\av\in \D^K_{\vect{\mu}}: \sideset{^\av}{^{\mathrm{wc}}}\letG \ni \bv} \Ish(\av). \label{defn:J} \eeq
\end{thm}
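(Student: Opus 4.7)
The plan is to invoke Garcia's extended contraction principle (Fact~\ref{thm:Garcia}) with the map $G^{\mathrm{wc}}:\D^K_{\muv}\to\R^K_+$. Given that Assumptions~\ref{Assumption1}--\ref{Assumption2} already supply a sample-path LDP for $\{\Av^L\}$ in the scaled uniform topology with a rate function $\Ish$ that is suitably continuous under finite-prefix modifications, it will suffice to verify, at every $\av$ with $\Ish(\av)<\infty$, (i) almost compactness of $G^{\mathrm{wc}}$ at $\av$, and (ii) for each cluster value $\bv\in{}^{\av}\!G^{\mathrm{wc}}$, the existence of a recovery sequence $\av^n\to\av$ with $G^{\mathrm{wc}}$ continuous at each $\av^n$, $G^{\mathrm{wc}}(\av^n)\to\bv$, and $\Ish(\av^n)\to\Ish(\av)$.

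The first major step is to exploit the simplex structure to reduce infinite-horizon information to a finite prefix. For any work-conserving allocation on $\RR_s$, whenever $\Wv_t\neq 0$ one has $\hat{\Rvec}_t=1$ by the constraint $\hat{\Rvec}\le 1$ combined with non-idling, so the scalar $\hat{\Wv}_t=\sum_k W_t^k/C^k$ obeys the Lindley recursion $\hat{W}_{t-1}=[\hat{W}_t-1]^+ + \hat{A}_t$ with deterministic service rate $1$. Since $\hat{\muv}<1$ and $\av\in\D^K_{\muv}$, Loynes' formula gives $\hat{\mcW}(\av)=\sup_{t\ge 0}(\hat{\av}(0,t]-t)<\infty$ and this supremum is attained at a finite $T^*(\av)$ (because $\hat{\av}(0,t]-t\to-\infty$). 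Looking backward from time $0$ one can then locate a regeneration point $-T^{\star}(\av)\le -T^*(\av)$ at which the full vector workload first hits $\vect 0$, so that $G^{\mathrm{wc}}(\av)=G^{\mathrm{wc}}_{T^\star(\av)}(\av|_{(0,T^\star(\av)]})$. Almost compactness at $\av$ now follows from this: for any $\av^n\to\av$ in the scaled uniform topology, the scaled-norm closeness uniformly controls the Loynes estimates, so $\{G^{\mathrm{wc}}(\av^n)\}$ is bounded in $\R^K_+$ and thus has a convergent subsequence.

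The subtler step is the recovery-sequence condition. Given $\bv\in{}^{\av}\!G^{\mathrm{wc}}$, pick $\tilde{\av}^m\to\av$ with $G^{\mathrm{wc}}(\tilde{\av}^m)\to\bv$. Using the finite-horizon argument above, $G^{\mathrm{wc}}(\tilde{\av}^m)$ is determined by $\tilde{\av}^m|_{(0,T^\star_m]}$ for finite $T^\star_m$; I would then truncate each $\tilde{\av}^m$ at a time $t_m$ chosen large enough that (a) the regeneration has already occurred and (b) replacing the tail $s>t_m$ by the original tail $\av_s$ does not affect $G^{\mathrm{wc}}$. The resulting $\av^m$ agrees with $\av$ on $s>t_m$, so Assumption~\ref{Assumption2} applied to $\av^m\to\av$ yields $\Ish(\av^m)\to\Ish(\av)$ while $G^{\mathrm{wc}}(\av^m)\to\bv$ by construction. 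Finally, to ensure continuity of $G^{\mathrm{wc}}$ at each $\av^m$, I would make an additional infinitesimal perturbation of the finite prefix of $\av^m$ that breaks all ties in the successive max-weight selections (this is the same quasi-continuity mechanism behind Lemma~\ref{lem:cont_H} lifted through $G^{\mathrm{wc}}_{t_m}$), invoking Assumption~\ref{Assumption2} once more and a diagonal extraction. The main obstacle is this last construction: one needs a perturbation scheme that simultaneously (a) keeps the long-run rate equal to $\muv$ so we remain in $\D^K_{\muv}$, (b) preserves the finite-horizon workload up to arbitrary accuracy, and (c) lands in the continuity set of $G^{\mathrm{wc}}$, all while Assumption~\ref{Assumption2} controls $\Ish$.
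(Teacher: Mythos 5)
Your overall route is the paper's: Garcia's extended contraction principle applied to $G^{\mathrm{wc}}$, with the simplex structure used to collapse the infinite-horizon map onto a finite prefix via the Lindley/Loynes recursion for the normalized sum workload $\hat{\Wv}$. Two of your intermediate claims, however, need repair. First, the assertion that the vector workload ``first hits $\vect{0}$'' at some finite time is false in general: with strictly positive arrivals in every slot the workload never equals $\vect{0}$. What is true, and what Lemma~\ref{claimzero} actually establishes, is that the workload enters $\RR_s$ at some finite time $-s^*(\av)$; since the work-conserving scheduler serves any $\Wv\in\RR_s$ in full (via $\mathrm{Proj}_{\RR_s}$), the following slot's workload equals the arrivals alone, which coincides with the zero-initial-condition dynamics, whence $G^{\mathrm{wc}}(\av)=G^{\mathrm{wc}}_{s^*}(\av|_{(0,s^*]})$. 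Second, Assumption~\ref{Assumption2} requires the recovery sequence to agree with $\av$ beyond a \emph{single} horizon $t$ uniformly in the index, whereas your truncation times $t_m$ are chosen per approximating path and could grow; moreover, after splicing in $\av$'s tail you must still verify that the hybrid path has workload in $\RR_s$ at $-t_m$. The paper resolves both issues at once with the second half of Lemma~\ref{claimzero}: for any $\av^n\to\av$ the workload under $\av^n$ eventually lies in $\RR_s$ at the \emph{same} time $-s^*(\av)$, so one fixed truncation horizon serves the entire sequence.

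The step you yourself flag as ``the main obstacle'' --- producing nearby points at which $G^{\mathrm{wc}}$ is continuous --- is where the real content lies, and an ``infinitesimal tie-breaking perturbation'' sketched at this level does not close it: one must simultaneously control all $s^*$ successive scheduling decisions, keep the perturbed arrivals nonnegative and the long-run rate equal to $\muv$, and then transfer continuity of the finite-horizon map to continuity of the infinite-horizon map (which again uses the uniformity of $s^*$ over neighborhoods). The paper does not redo this work for $G^{\mathrm{wc}}$; it invokes the already-established quasi-continuity of $G^{\mathrm{wc}}_{s^*}$ (Lemma~\ref{lem:continuity_Gt}, a nontrivial two-case induction in Appendix~B) at the prefix $\av|_{(0,s^*]}$ and concatenates the resulting finite recovery sequences with the fixed tail $\av|_{(s^*,\infty]}$. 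If you take Lemma~\ref{lem:continuity_Gt} as given, your outline becomes essentially the paper's proof; as written, it leaves its hardest step open.
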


Note that all the rate functions have the appearance of being what one would naturally expect, i.e., among all arrival sequences that result in the required workload at the required epoch, find the one with the least cost to deduce the rate function. However, the discontinuity of the queueing map makes this a non-trivial assertion and also enlarges the set of allowed arrival sequences.

\section{Analysis: LDP's for Workloads} \label{sect:LDP_workloads}

In this section, we prove the main results of the paper: establishing LDP for the sequences of the finite-horizon and infinite-horizon workloads. We first delineate the proof for the LDPs for the sequence of the finite-horizon workloads. 

\subsection{LDP for Finite-Horizon Workloads}

In this section, for $t\in \N$, we establish an LDP for finite-horizon workloads 
$\{\Wv^L_{0,t} := G^{\mathrm{wc}}_t(\Av^L|_{(0,t]})\}$ and $\{\Wv^L_{0,t} := G_t(\Av^L|_{(0,t]})\}$. The approach is to first show that the mappings $G^\mathrm{wc}_t: \R^{K t}_+ \mapsto \R^K_+$ and 
$G_t: \R^{K t}_+ \mapsto \R^K_+$ are quasi-continuous and almost compact and 
use Garcia's extended contraction principle to obtain an LDP for the finite-horizon workloads from the LDP assumption for $\{\Av^L|_{(0,t]}\}$. 
From Fact~\ref{defn:quasi-continuity_condition}, the almost compactness and quasi-continuity of workload mappings are sufficient to establish an LDP for finite-horizon workload, since according to Assumption~\ref{Assumption-cont}, $\Ish_t(\cdot)$ is continuous (in the product topology).   

Using the quasi-continuity of the scheduling function we now prove the required quasi-continuity of the workload maps. First we consider the work-conserving max-weight scheduler. 
\begin{lem} \label{lem:continuity_Gt}
For $t\in \N$, $G^{\mathrm{wc}}_t$ is quasi-continuous with respect to the scaled uniform topology.
\end{lem}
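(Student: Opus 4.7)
My plan is induction on $t$, exploiting the backward recursion $\Wv_{s-1} = [\Wv_s - H^{\mathrm{wc}}(\Wv_s)]^+ + \Av_s$ with $\Wv_t = 0$. Let $\Phi(\wv) := [\wv - H^{\mathrm{wc}}(\wv)]^+$, so that $\Phi = F\circ(\mathrm{id},H^{\mathrm{wc}})$ with the continuous $F(\xv,\yv) := [\xv-\yv]^+$. Since $H^{\mathrm{wc}}$ is quasi-continuous by Lemma~\ref{lem:cont_H}, Remark~\ref{Rem:CompositionQuasi} yields that $\Phi$ is quasi-continuous. Because $G^{\mathrm{wc}}_t$ depends only on the first $t$ coordinates of the arrival sequence, establishing quasi-continuity on $\R_+^{Kt}$ suffices, as convergence in the scaled uniform topology restricted to the first $t$ coordinates agrees with the Euclidean one. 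The difficulty is that compositions of quasi-continuous maps need not be quasi-continuous, so I cannot simply cascade $\Phi$; instead I directly construct the approximating sequence.

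The base case $t=1$ is immediate: $0\in\prod_{k=1}^{K}[0,C_k)$, so by~\eqref{eq:Hwc} $H^{\mathrm{wc}}(0) = \mathrm{Proj}_{\RR}(0) = 0$, giving $G^{\mathrm{wc}}_1(\av_1) = \av_1$, the identity. For the inductive step at a point $(\av_1,\ldots,\av_t) \in \R_+^{Kt}$ with nominal workloads $\Wv_{t-1},\ldots,\Wv_0$ produced by the recursion, I build a sequence $(\av_1^n,\ldots,\av_t^n)\to(\av_1,\ldots,\av_t)$ in $\R_+^{Kt}$ such that each intermediate workload $\Wv_s^n$ (for $s=1,\ldots,t-1$) lies in the continuity set $D$ of $H^{\mathrm{wc}}$; at any such perturbed point the recursion composes maps continuous at all invoked arguments, so $G^{\mathrm{wc}}_t$ is continuous there. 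The perturbations are chosen in the order $\av_t^n,\av_{t-1}^n,\ldots,\av_2^n$, with $\av_1^n := \av_1$ since $\av_1$ does not enter any $H^{\mathrm{wc}}$ evaluation. For $s=t$, quasi-continuity of $H^{\mathrm{wc}}$ at $\Wv_{t-1}=\av_t$ produces $\av_t^n\to\av_t$ with $\av_t^n \in D$ and $H^{\mathrm{wc}}(\av_t^n)\to H^{\mathrm{wc}}(\av_t)$, hence $\Phi(\av_t^n)\to\Phi(\av_t)$. Inductively, for $s<t$ with $\Wv_s^n\in D$ converging to $\Wv_s$, I use quasi-continuity of $H^{\mathrm{wc}}$ at $\Wv_{s-1}$ to get continuity points $\xv^n\to\Wv_{s-1}$ with $H^{\mathrm{wc}}(\xv^n)\to H^{\mathrm{wc}}(\Wv_{s-1})$, and define $\av_s^n := \xv^n - \Phi(\Wv_s^n)$; then $\Wv_{s-1}^n = \xv^n \in D$ and $\av_s^n\to\Wv_{s-1}-\Phi(\Wv_s) = \av_s$. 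It follows that $\Wv_s^n\to\Wv_s$ for every $s$, so $G^{\mathrm{wc}}_t(\av_1^n,\ldots,\av_t^n)\to G^{\mathrm{wc}}_t(\av_1,\ldots,\av_t)$, completing the argument.

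The main obstacle I anticipate is ensuring $\av_s^n \in \R_+^K$ coordinate-wise at every stage. For any $k$ with $[\av_s]_k>0$, the convergence $\av_s^n\to\av_s$ automatically gives $[\av_s^n]_k\ge 0$ for large $n$; but when $[\av_s]_k=0$, the choice $\av_s^n = \xv^n-\Phi(\Wv_s^n)$ forces $[\xv^n]_k \ge [\Phi(\Wv_s^n)]_k$, i.e., $\Wv_{s-1}$ must be approached from a restricted upper half-space. This is where the structural content of Lemma~\ref{lem:cont_H} (Appendix~\ref{sec:app1}) is needed: the proof there realizes $H^{\mathrm{wc}}$ as a piecewise selection whose discontinuities are confined to a finite union of lower-dimensional affine cells, so $D$ is open and dense and its intersection with any open half-space near $\Wv_{s-1}$ remains dense. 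Combined with a refinement of quasi-continuity guaranteeing that along these restricted directions one can still arrange $H^{\mathrm{wc}}(\xv^n)\to H^{\mathrm{wc}}(\Wv_{s-1})$, coordinate-wise positivity can be preserved across all $t-1$ inductive steps.
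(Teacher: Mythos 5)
Your overall strategy---induction on $t$, using the additive dependence of $\Wv_{s-1}$ on $\av_s$ to steer each intermediate workload onto a continuity point of $H^{\mathrm{wc}}$---is essentially the paper's strategy (the paper organizes it slightly differently, adjusting only the newest arrival $\av_1$ at each level of the induction and invoking the induction hypothesis for the rest, but the mechanism is the same). However, there is a genuine gap at exactly the point you flag and then wave away: non-negativity of $\av_s^n = \xv^n - \Phi(\Wv_s^n)$ when some coordinate of $\av_s$ is zero. Your proposed fix is that the continuity set $D$ of $H^{\mathrm{wc}}$ is dense in the relevant half-space, plus an unnamed ``refinement of quasi-continuity'' guaranteeing $H^{\mathrm{wc}}(\xv^n)\to H^{\mathrm{wc}}(\Wv_{s-1})$ along the constrained directions. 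Density of $D$ does not give you this: at a discontinuity point of the max-weight selection the function $H^{\mathrm{wc}}$ restricted to $D$ has \emph{several} distinct cluster values (one for each extreme point of the maximizing face of $\RR$, reached from different cones of approach), and quasi-continuity only asserts that \emph{some} sequence of continuity points recovers the value $H^{\mathrm{wc}}(\Wv_{s-1})$. Whether such a sequence can be found inside the set $\{\xv: x^k \ge [\Phi(\Wv_s^n)]^k \text{ for } k \text{ with } a_s^k=0\}$ is precisely what must be proved, and it is the substantive content of the argument, not a corollary of Lemma~\ref{lem:cont_H}.

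The paper resolves this with a two-case analysis that your proposal is missing. When every component of the arrival being adjusted is positive, a margin argument (bounding each of the three error terms in the analogue of your $\xv^n - \Phi(\Wv_s^n)$ decomposition by $\epsilon/3$, with $\epsilon$ the smallest component) keeps the perturbed arrival non-negative. When some components are zero, one first perturbs the arrival along the specific directions $1/(mC_k)$ applied to the coordinates that are either zero or belong to $\arg\max_k W^k C_k$; this makes all components strictly positive \emph{while leaving the scheduling decision at the resulting workload unchanged}, so the target value of $H^{\mathrm{wc}}$ is preserved; one then applies the positive-component case to each perturbed point and extracts a diagonal sequence. (For a general rate region the admissible perturbation directions must moreover be chosen using the normal to the supporting hyperplane, as the paper notes in Remark~\ref{remark7}.) Without an argument of this kind, your construction can fail at any stage $s$ where $a_s^k=0$ for some $k$, so the proof is incomplete as written.
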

\begin{IEEEproof} See Appendix~\ref{sec:app2}. The induction-based proof uses the quasi-continuity of the scheduler $H$ and the linear dependence of the workload $\Wv_s$ at time $-s$ on $\Av_{s+1}$ for all $s\in (0,t-1]$. 
\end{IEEEproof}

Similarly, in order to establish an LDP for the workload process under 
the maximum weight scheduler, 
we have the quasi-continuiy of function $G_t$.
\begin{lem} \label{lem:quasic_mw}
For $t\in\N$, $G_t(\cdot)$ is quasi-continuous with respect to the scaled uniform topology.
\end{lem}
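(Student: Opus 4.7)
My plan is to model the proof after that of Lemma~\ref{lem:continuity_Gt}: both $G_t$ and $G^{\mathrm{wc}}_t$ are built from the same workload recursion $\Wv_{s-1} = [\Wv_s - \letH(\Wv_s)]^+ + \Av_s$ (respectively with $H^{\mathrm{wc}}$ in place of $H$), and the only structural ingredients used in the earlier argument are (i) quasi-continuity of the scheduling map and (ii) the additive (unit-slope) dependence of each workload on the most recent arrival. Since Lemma~\ref{lem:cont_H} delivers quasi-continuity of both $H$ and $H^{\mathrm{wc}}$, substituting $H$ for $H^{\mathrm{wc}}$ throughout the earlier proof should yield the claim.

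Concretely, I would induct on $s \in \{1,\ldots,t\}$, where at stage $s$ we consider the map $g_s \colon (\Av_t,\ldots,\Av_{t-s+1}) \mapsto \Wv_{t-s}$, so that $G_t = g_t$. The base case $s=1$ is immediate: from $\Wv_t = \mathbf{0}$ and $H(\mathbf{0}) \in \RR \subset \R_+^K$, the queue dynamic \reqn{eq:qdynamic} gives $\Wv_{t-1} = [\mathbf{0} - H(\mathbf{0})]^+ + \Av_t = \Av_t$, so $g_1$ is the identity, hence continuous. For the inductive step, fix $\av^\star = (\av_t^\star,\ldots,\av_{t-s+1}^\star)$ and set $\wv^\star = g_{s-1}(\av_t^\star,\ldots,\av_{t-s+2}^\star)$. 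To exhibit a witness sequence for quasi-continuity at $\av^\star$, I would first use quasi-continuity of $H$ at $\wv^\star$ to obtain $\tilde\wv^n \to \wv^\star$ with each $\tilde\wv^n$ a continuity point of $H$ and $H(\tilde\wv^n) \to H(\wv^\star)$; then invoke the inductive hypothesis to pick $(\bv_t^n,\ldots,\bv_{t-s+2}^n) \to (\av_t^\star,\ldots,\av_{t-s+2}^\star)$ consisting of continuity points of $g_{s-1}$ with $g_{s-1}(\bv^n) \to \wv^\star$.

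The critical maneuver is aligning these two witness sequences, which is where the linear dependence enters. Since $g_{s-1}$ depends on its last argument additively with unit slope (that is, $g_{s-1}(\ldots,\av_{t-s+2}+\delta) = g_{s-1}(\ldots,\av_{t-s+2}) + \delta$ coordinatewise), I can replace $\bv_{t-s+2}^n$ by $\bv_{t-s+2}^n + (\tilde\wv^n - g_{s-1}(\bv^n))$. This forces $g_{s-1}$ to take the value $\tilde\wv^n$ exactly, preserves convergence to $\av_{t-s+2}^\star$ (the corrective shift tends to $\mathbf{0}$), and preserves continuity of $g_{s-1}$ at the new point via translation. Setting $\av_{t-s+1}^n = \av_{t-s+1}^\star$ and combining, the composite $g_s = [g_{s-1} - H\circ g_{s-1}]^+ + \av_{t-s+1}$ is continuous at the modified sequence (because $g_{s-1}$ is continuous there and $H$ is continuous at its image $\tilde\wv^n$), and $g_s(\av^n) \to g_s(\av^\star)$ follows from $\tilde\wv^n \to \wv^\star$ and $H(\tilde\wv^n)\to H(\wv^\star)$.

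The main obstacle is conceptual rather than computational: a naive composition argument fails, because the composition of two quasi-continuous functions need not itself be quasi-continuous (Remark~\ref{Rem:CompositionQuasi}), so one cannot simply compose $H$ with $g_{s-1}$. The additive dependence of the workload on the arrival is exactly the structural feature that lets us absorb the required perturbation of the $H$-argument into a perturbation of the arrival, so that a single sequence serves as a continuity witness for both $g_{s-1}$ and $H \circ g_{s-1}$ simultaneously. A secondary technical point is keeping the shifted arrivals in $\R_+^K$ near boundary values; this is handled by choosing $\tilde\wv^n \geq \wv^\star$ coordinatewise (which is possible since the set of continuity points of $H$ is dense in any neighborhood of $\wv^\star$), so that the corrective shift is coordinatewise non-negative, or by passing to a tail of the sequence once the shift becomes dominated by any strictly positive coordinate of $\av_{t-s+2}^\star$.
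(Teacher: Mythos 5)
Your proposal takes a genuinely different route from the paper, and it contains a gap at exactly the point where the paper itself abandons this route. The paper does \emph{not} prove Lemma~\ref{lem:quasic_mw} by repeating the induction of Lemma~\ref{lem:continuity_Gt} with $H$ in place of $H^{\mathrm{wc}}$. Instead it works with the correspondence $\mathcal{G}_t$ of \emph{all} workload vectors reachable at time $0$ under \emph{any} tie-breaking of the argmax at each step, shows by induction that $\mathcal{G}_{t-1}-\mathcal{H}(\mathcal{G}_{t-1})$ is locally bounded and outer semicontinuous (hence \emph{usc} with compact values), and then invokes the selection theorems of \cite{CaoMoors2005,CazacuLawson2007} to extract a quasi-continuous selection, which is what $G_t$ is \emph{defined} to be. The proof explicitly cautions that this selection need not be obtained by iterating the fixed quasi-continuous selection $H$ from Lemma~\ref{lem:cont_H}. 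Your induction, by contrast, attempts to prove the stronger claim that the iterated-$H$ map itself is quasi-continuous; the paper's warning indicates the authors do not believe this stronger claim can be established (or even that it holds), so you need either to close the gap below or to switch to the correspondence/selection argument.

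The gap is in your treatment of nonnegativity of the shifted arrival, which is precisely the boundary case that occupies \textbf{Case~2} of the proof of Lemma~\ref{lem:continuity_Gt}. First, quasi-continuity of $H$ at $\wv^\star$ only guarantees \emph{some} sequence of continuity points $\tilde\wv^n\rightarrow\wv^\star$ with $H(\tilde\wv^n)\rightarrow H(\wv^\star)$; density of the continuity set in every neighborhood does not let you additionally impose $\tilde\wv^n\geq\wv^\star$ coordinatewise while retaining convergence of the \emph{values} to $H(\wv^\star)$, since the continuity points whose $H$-values are near $H(\wv^\star)$ lie on a particular side of the discontinuity cone determined by the geometry of $\RR$ and the selection. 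Second, even granting such a choice, the corrective shift is $(\tilde\wv^n-\wv^\star)-\bigl(g_{s-1}(\bv^n)-\wv^\star\bigr)$, and the second term, coming from the induction-hypothesis witness sequence, has no sign control; so the shifted arrival can still become negative in a coordinate where $\av^\star_{t-s+2}$ vanishes, and your fallback of ``passing to a tail'' only works for strictly positive coordinates (the paper's Case~1). The paper's repair for $H^{\mathrm{wc}}$ is a carefully designed preliminary perturbation of the zero and argmax coordinates that provably preserves the scheduling decision, and even there Remark~\ref{remark7} concedes the construction must be redone rate-region by rate-region. For the non-work-conserving $H$ the situation is strictly worse: on $\Pi_{k}[0,C_k)$, where $H^{\mathrm{wc}}$ is the continuous projection, $H$ remains the discontinuous argmax selection (at $\Wv=\mathbf{0}$ the argmax is all of $\RR$), so the perturbed workload need not land at a continuity point of $H$ and the Case~2 machinery does not transfer. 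This is the missing idea that the correspondence-plus-selection-theorem argument is designed to circumvent.
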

\begin{IEEEproof}
See Appendix~\ref{sec:app2}. In this induction-based proof we establish certain analytical properties of the workload map for all possible rate-regions that then allows us to obtain a quasi-continuous selection.
\end{IEEEproof}

Having established quasi-continuity, the next result demonstrates the almost compactness of the workload maps.
\begin{lem}\label{lem:almostcompact}
For $t\in \N$, both $G_t$ and $G^{\mathrm{wc}}_t$ are almost compact on $\R^{K}_+$ with respect to the scaled uniform topology.
\end{lem}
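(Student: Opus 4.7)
The plan is to exploit the finite-dimensional nature of the domain $\R^{Kt}_+$ on which $G_t$ and $G^{\mathrm{wc}}_t$ act. On $\R^{Kt}_+$ the scaled uniform norm reduces to a weighted sup of finitely many coordinates, so it is topologically equivalent to the standard Euclidean norm. Consequently, convergence $\av^n \to \av$ in the scaled uniform topology is equivalent to coordinate-wise convergence and in particular implies $\sup_n \|\av^n\|_\infty < \infty$.

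Next I would iterate the queueing dynamics from the initial condition $\Wv_t = \mathbf{0}$. For both the max-weight and work-conserving max-weight schedulers, one has $\Rvec_s \in \RR \subset \R_+^K$ and
\[
W^k_{s-1} = [W^k_s - R^k_s]^+ + A^k_s \le W^k_s + A^k_s,
\]
since dropping the positive part only increases the right-hand side. Unrolling this bound from $s=t$ down to $s=1$ with $W^k_t=0$ yields $W^k_{0,t} \le \sum_{s=1}^t A^k_s$ coordinate-wise. Therefore $\|G_t(\av)\|_\infty \le t \|\av\|_\infty$, and the identical estimate holds for $G^{\mathrm{wc}}_t$. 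The scheduler-specific details are irrelevant for this bound; only non-negativity of rates matters.

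Applying this estimate to the bounded sequence $\{\av^n\}$ shows that the image sequence $\{G_t(\av^n)\}$ (resp.\ $\{G^{\mathrm{wc}}_t(\av^n)\}$) lies in a bounded subset of $\R^K_+$. Since closed bounded subsets of $\R^K_+$ are compact, Bolzano--Weierstrass produces a subsequence converging to some $\yv \in \R^K_+$. This verifies almost compactness at the arbitrary point $\av \in \R^{Kt}_+$, for both maps simultaneously.

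The argument is essentially a boundedness-plus-compactness observation; there is no genuine obstacle. The only point worth being careful about is verifying that the finite-horizon map $G_t$ (and $G^{\mathrm{wc}}_t$) is indeed well-defined even at those discontinuity points of the scheduler $H$ where the $\arg\max$ in \eqref{eq:maxwt} is not a singleton: this is precisely the content of Lemma~\ref{lem:cont_H}, which furnishes a single-valued (quasi-continuous) selection so that $G_t$ is a bona fide function whose image obeys the above bound.
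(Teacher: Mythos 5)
Your proposal is correct and follows essentially the same route as the paper: the key bound $\Wv_{0,t}\le \av(0,t]$ (workload dominated by cumulative arrivals, valid for any non-negative rate allocation), boundedness of the image of a convergent sequence, and Bolzano--Weierstrass. Your additional remark that $G_t$ must first be fixed as a single-valued quasi-continuous selection (Lemma~\ref{lem:cont_H}) is a fair point of care, but it does not change the argument.
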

\begin{IEEEproof}
Since the workload at any time cannot exceed the amount work brought in from the last time the 
system was empty irrespective of the scheduler used, we automatically get the following bounds
\begin{align*}
G_t(\av|_{(0,t]})  \leq \av(0,t], \quad
G_t^{\mathrm{wc}}(\av|_{(0,t]})  \leq \av(0,t].
\end{align*}
This implies the
almost compactness of $G_t$ and $G_t^{\mathrm{wc}}$. Since
every sequence $\{\av^n\}$ converging to $\av$ (in $\D_{\muv}^K$ in the scaled uniform norm topology) converges point-wise too, it follows that $G_t(\av^n|_{(0,t]})$ and $G_t^{\mathrm{wc}}(\av^n|_{(0,t]})$ will lie in a bounded subset of $\R_+^K$ so we can use the Bolzano-Weierstrass theorem.
\end{IEEEproof}
Now, as already discussed, the proof of Theorems~\ref{thm:FiniteHorizonLDP} 
and~\ref{thm:FiniteHorizonLDP-NWC} is complete. 

Next, we discuss the LDP for the infinite-horizon workloads for the work-conserving max-weight scheduler with a simplex rate-region.

\subsection{LDP for Infinite-Horizon Workloads}
In this section, we establish an LDP of the sequence of the infinite-horizon workloads $\{\mcW^L = G^{\mathrm{wc}}(\Av^L)\}$  where $\Av^L \in \D_\muv^K$,
$\muv \in \text{int}(\RR_s)$, and $\{\Av^L\}$ satisfies Assumptions~\ref{Assumption1}-\ref{Assumption2}. 
Similar to the last section, we first show that 
the mapping $G^{\mathrm{wc}}
$ is quasi-continuous and almost compact on $\D^K_{\boldsymbol{\mu}}$ 
and then use Garcia's extended contraction principle to establish the desired LDP.

The following lemmas establish the necessary steps to apply Garcia's contraction 
principle to 
the stationary workload map. The first of these lemmas relates the 
infinite horizon workload mapping to that of a finite horizon. 

\begin{lem} \label{claimzero} 
Consider an arrival process $\av \in \D^K_\muv$. There exists a $s^*=s^*(\av) < \infty$  such that 
the workloads at time $-s^*$ under $\av$ falls within 
the rate region $\RR_s$, i.e.,
$G^{\mathrm{wc}}(\av|_{(s^*,\infty)}) \in \RR_s$. Furthermore, 
for any sequence of arrival processes $\{\av^n \in \D^K_\muv\}$ converging to
$\av$ (in scaled uniform topology), the workloads at time $-s^*$ under $\av^n$, when
$n$ is large enough, also fall within 
the rate region $\RR_s$, i.e., 
$\exists n_0$ such that $G^{\mathrm{wc},\RR_s}(\av^n|_{(s^*,\infty)}) \in \RR_s$ for $n> n_0$. 
\end{lem}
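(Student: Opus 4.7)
The proof would proceed in two steps: establishing the existence of $s^*$ for the fixed $\av$, and then transferring this property to the perturbed sequence $\{\av^n\}$ via a combination of the finite-horizon quasi-continuity (Lemma~\ref{lem:continuity_Gt}), the monotone Loynes approximation $\Wv_{s^*, T}(\cdot) \nearrow \Wv_{s^*}(\cdot)$ as $T \to \infty$, and a uniform stability bound.

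For the existence of $s^*$, I would begin by invoking pathwise stability of the work-conserving max-weight scheduler on the simplex for arrivals with strictly feasible long-term rate $\muv \in \text{int}(\RR_s)$, equivalently $\hat{\muv} := \sum_k \mu^k/C^k < 1$. This stability, $\sup_s \hat{\Wv}_s(\av) < \infty$ (where $\Wv_s(\av) := G^{\mathrm{wc}}(\av|_{(s,\infty)})$), follows from a standard quadratic Lyapunov drift argument applied to $V(\Wv) := \sum_k (W^k)^2/(2 C^k)$, exploiting the max-weight optimality $\langle \Wv_s, H^{\mathrm{wc}}(\Wv_s) \rangle \geq \langle \Wv_s, \muv' \rangle$ for some $\muv' \in \RR_s$ strictly dominating $\muv$ coordinate-wise. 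Given this uniform bound, $\Wv_s(\av) \in \RR_s$ must hold infinitely often: if instead $\hat{\Wv}_s(\av) > 1$ for all $s \geq s_0$, the associated drift combined with $\hat{\av}(0, s_0]/s_0 \to \hat{\muv} < 1$ would contradict the boundedness of $\hat{\Wv}_s$. To upgrade to strict interior $\Wv_{s^*}(\av) \in \text{int}(\RR_s)$, with margin $\delta > 0$, I would use the identity $\Wv_s(\av) = \Av_{s+1}(\av)$ whenever $\Wv_{s+1}(\av) \in \RR_s$ (since $H^{\mathrm{wc}}(\Wv) = \Wv$ on $\RR_s$), combined with the fact that $\hat{\Av}_s(\av) < 1$ must hold on a set of $s$ of positive asymptotic density (otherwise $\hat{\muv} \geq 1$); intersecting these two index sets produces the desired $s^*$.

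For the robustness step, convergence in the scaled uniform norm $\|\av^n - \av\|_u = \sup_t |(\av^n - \av)(0, t]|/t \to 0$ is stronger than pointwise convergence, so $\av^n_t \to \av_t$ for each fixed $t$ and hence $\av^n|_{(s^*, T]} \to \av|_{(s^*, T]}$ componentwise for any fixed $T > s^*$. The quasi-continuity of $G^{\mathrm{wc}}_{T-s^*}$ (Lemma~\ref{lem:continuity_Gt}), invoked at a continuity point arranged by a suitable choice of $T$, then gives $\Wv_{s^*, T}(\av^n) := G^{\mathrm{wc}}_{T-s^*}(\av^n|_{(s^*, T]}) \to \Wv_{s^*, T}(\av)$ as $n \to \infty$. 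Combined with the monotone Loynes convergence $\Wv_{s^*, T}(\av^n) \nearrow \Wv_{s^*}(\av^n)$ and a uniform-in-$T$ stability bound $\sup_{T, n \geq n_1} \hat{\Wv}_{s^*, T}(\av^n) < \infty$ (obtained by re-applying the Lyapunov argument to each $\av^n$, all of which share the long-term rate $\muv$), a diagonal argument would yield $\Wv_{s^*}(\av^n) \to \Wv_{s^*}(\av)$. The margin $\delta$ from the previous step then forces $\Wv_{s^*}(\av^n) \in \RR_s$ for all $n \geq n_0$.

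The hard part will be the robustness step. The scaled uniform topology permits cumulative perturbations $\av^n(0, t] - \av(0, t]$ that scale linearly in $t$, so one cannot naively truncate the history at a fixed $T$ and invoke only finite-horizon continuity. The resolution requires tightly coupling the Loynes monotonicity with the uniform stability bound across $\{\av^n\}$: while the inner limit $\lim_n \Wv_{s^*, T}(\av^n)$ is controlled by quasi-continuity for each $T$, the outer limit $\sup_T$ must be passed through this convergence without losing the strict-interior guarantee, which is where the shared long-term rate $\muv$ and the resulting uniform Lyapunov control become essential.
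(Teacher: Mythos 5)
Your robustness step has two genuine gaps. First, the ``monotone Loynes approximation $\Wv_{s^*,T}(\cdot)\nearrow\Wv_{s^*}(\cdot)$'' does not hold for the \emph{vector} workload under max-weight: the scheduling decision at each slot depends on the current workload vector, so extending the horizon can decrease some coordinates while increasing others. Only the normalized sum $\hat{\Wv}$, which obeys the scalar Lindley recursion $\hat{\Wv}_{t-1}=[\hat{\Wv}_{t}-1]^{+}+\hat{\av}_{t}$, is monotone in $T$. Second, and more seriously, quasi-continuity of $G^{\mathrm{wc}}_{T-s^*}$ cannot be ``invoked at a continuity point arranged by a suitable choice of $T$'' to conclude convergence along the \emph{given} sequence $\{\av^n\}$: quasi-continuity only guarantees that \emph{some} approximating sequence with convergent images exists, whereas the lemma must hold for \emph{every} sequence converging to $\av$. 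Indeed, the full vector convergence $\Wv_{s^*}(\av^n)\rightarrow\Wv_{s^*}(\av)$ that you aim for is false in general precisely because the max-weight map is discontinuous; fortunately the lemma does not require it.

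The missing idea, which is the heart of the paper's proof, is to work entirely with the scalar quantity $\hat{\Wv}$. For the simplex region, $\Wv_s\in\RR_s$ if and only if $\hat{\Wv}_s\le 1$, and $\hat{\Wv}_0=\hat{G}(\av)=\sup_{t\in\N}\,\hat{\av}(0,t]-(t-1)$ is exactly the workload of a single work-conserving queue. Since $\av\in\D^K_{\muv}$ with $\hat{\muv}<1$, this supremum is attained on a finite window $(0,t_0]$, and taking $s^*$ to be the smallest optimizer yields $\hat{G}(\av|_{(s^*,\infty]})\le 1$; this replaces your Lyapunov argument, which in any case is stated as an expected-drift inequality and would need to be reworked pathwise for a single deterministic $\av$. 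For the perturbed sequence one then uses that the scalar map $\hat{G}$ \emph{is} genuinely continuous on $\D_{\hat{\muv}}$ in the scaled uniform topology when $\hat{\muv}<1$ (\cite[Lemma 13]{Wisch01}) --- the linear-in-$t$ cumulative perturbations you worry about are absorbed by the $-(t-1)$ drift term --- together with continuity of the shift, giving $\hat{G}(\av^n|_{(s^*,\infty]})\rightarrow\hat{G}(\av|_{(s^*,\infty]})$ and hence $\Wv_{s^*}(\av^n)\in\RR_s$ for all large $n$. You correctly identify uniform control of the tail as the crux, but without the reduction to this continuous scalar map the argument does not close.
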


\begin{IEEEproof}
See Appendix~\ref{sec:app3}. The main idea is to use the fact that a suitably normalized sum (over all queues) workload process behaves like the workload of a single server queue with only one flow and a work-conserving service discipline. This allows us to use the continuity of the workload mapping
of a single server queue and the stability of the queue to arrive at the assertion of the lemma.
\end{IEEEproof}

Now we are ready to verify the requirements of Garcia's extended contraction principle. 

\begin{lem} \label{lem:quasi-continuous_G}
Let 
$\av \in \D^K_\muv$ be an arrival process with rate $\muv\in\mathrm{int}(\RR_s)$, and
  $G^{\mathrm{wc}}(\av)$ be the corresponding infinite-horizon 
workload. For any $\mcW  \in \sideset{^\av}{^{\mathrm{wc}}}\letG $ there exists a
sequence of arrivals $\{\av^n \in \D^K_\muv\}$ such that $\av^n$ converges
 to $\av$ in scaled uniform topology, $G^{\mathrm{wc}}(\av^n) \rightarrow \mcW$, 
 $G^{\mathrm{wc}}$ is continous at $\av^n$, and  $\Ish(\av^n) \rightarrow \Ish(\av)$. 
\end{lem}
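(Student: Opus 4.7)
The plan is to construct $\av^n$ in two stages: first, splice the given witness sequence $\bv^n$ for $\mcW \in \sideset{^\av}{^{\mathrm{wc}}}\letG$ with $\av$ at a sufficiently remote past epoch so that the workload computation decouples to a finite horizon; then, perturb the finitely many spliced coordinates onto a continuity point of the corresponding finite-horizon map furnished by quasi-continuity.

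Fix $\bv^n \to \av$ in scaled uniform norm with $G^{\mathrm{wc}}(\bv^n) \to \mcW$, and apply Lemma~\ref{claimzero} at $\av$ to produce $s^* < \infty$ with $G^{\mathrm{wc}}(\av|_{(s^*,\infty)}) \in \RR_s$ and $G^{\mathrm{wc}}(\bv^n|_{(s^*,\infty)}) \in \RR_s$ for all $n$ large. Define the splice $\hat{\av}^n$ by $\hat{\av}^n_s = \bv^n_s$ for $s \leq s^*$ and $\hat{\av}^n_s = \av_s$ for $s > s^*$. The key structural observation is that, on the simplex $\RR_s$, the work-conserving max-weight scheduler satisfies $H^{\mathrm{wc}}(\Wv) = \Wv$ whenever $\Wv \in \RR_s$ (both branches of~\eqref{eq:Hwc} give this), so any such workload clears in a single step: at time $-s^*+1$ the systems driven by $\bv^n$ and by $\hat{\av}^n$ both carry exactly $\bv^n_{s^*}$ and evolve identically thereafter. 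This yields the ``localization'' identity $G^{\mathrm{wc}}(\hat{\av}^n) = G^{\mathrm{wc}}_{s^*}(\hat{\av}^n|_{(0,s^*]}) = G^{\mathrm{wc}}(\bv^n) \to \mcW$. By quasi-continuity of $G^{\mathrm{wc}}_{s^*}$ (Lemma~\ref{lem:continuity_Gt}), for each $n$ I select $\tilde{\yv}^n \in \R_+^{Ks^*}$ within $1/n$ of $\hat{\av}^n|_{(0,s^*]}$ at which $G^{\mathrm{wc}}_{s^*}$ is continuous and with $G^{\mathrm{wc}}_{s^*}(\tilde{\yv}^n)$ within $1/n$ of $G^{\mathrm{wc}}(\hat{\av}^n)$, and set $\av^n_s = \tilde{\yv}^n_s$ for $s \leq s^*$ and $\av^n_s = \av_s$ for $s > s^*$. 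Three of the four required properties then follow immediately: $\av^n \to \av$ in scaled uniform norm (the perturbation is confined to $s^*$ coordinates, each converging to $\av_s$); the same clearance identity gives $G^{\mathrm{wc}}(\av^n) = G^{\mathrm{wc}}_{s^*}(\tilde{\yv}^n) \to \mcW$; and Assumption~\ref{Assumption2} with window $t = s^*$ delivers $\Ish(\av^n) \to \Ish(\av)$.

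The main obstacle is the remaining requirement, continuity of $G^{\mathrm{wc}}$ at each $\av^n$. For any $\mathbf{z}^k \to \av^n$ in scaled uniform norm, I would invoke the continuity principle underlying the proof of Lemma~\ref{claimzero}---that the normalized single-server workload map is continuous in the scaled-uniform topology, coupled with the (strict) inclusion of $G^{\mathrm{wc}}(\av^n|_{(s^*,\infty)}) = G^{\mathrm{wc}}(\av|_{(s^*,\infty)})$ inside $\RR_s$---to conclude $G^{\mathrm{wc}}(\mathbf{z}^k|_{(s^*,\infty)}) \in \RR_s$ for all $k$ large. The clearance identity then yields $G^{\mathrm{wc}}(\mathbf{z}^k) = G^{\mathrm{wc}}_{s^*}(\mathbf{z}^k|_{(0,s^*]})$; since scaled uniform convergence implies coordinate-wise convergence, $\mathbf{z}^k|_{(0,s^*]} \to \tilde{\yv}^n$ in $\R_+^{Ks^*}$, and continuity of $G^{\mathrm{wc}}_{s^*}$ at $\tilde{\yv}^n$ gives $G^{\mathrm{wc}}(\mathbf{z}^k) \to G^{\mathrm{wc}}(\av^n)$. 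The subtle point here is that one must exploit the proof of Lemma~\ref{claimzero}, not merely its statement, in order to use the same $s^*$ as a uniform bound along sequences converging to $\av^n$ rather than just to $\av$.
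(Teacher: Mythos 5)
Your proposal is correct and follows essentially the same route as the paper's proof: localize via Lemma~\ref{claimzero} to a finite horizon $s^*$ using the one-step clearance identity on $\RR_s$, perturb the finite block to a continuity point of $G^{\mathrm{wc}}_{s^*}$ supplied by Lemma~\ref{lem:continuity_Gt}, splice with the tail $\av|_{(s^*,\infty]}$, and invoke Assumption~\ref{Assumption2} for $\Ish(\av^n)\rightarrow\Ish(\av)$. The one substantive refinement is that you begin from a witness sequence $\bv^n$ for an arbitrary cluster point $\mcW\in\sideset{^\av}{^{\mathrm{wc}}}\letG$ and carry it through the splice, which is what Garcia's second condition literally demands; the paper's appendix argument only writes out the case $\mcW=G^{\mathrm{wc}}(\av)$, i.e., quasi-continuity at $\av$. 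Your closing observation---that continuity of $G^{\mathrm{wc}}$ at $\av^n$ requires re-running the argument of Lemma~\ref{claimzero} with the same $s^*$ for sequences converging to $\av^n$ (legitimate because $\av^n$ and $\av$ share the same tail beyond $s^*$)---is exactly the point the paper's proof passes over with the phrase ``by appealing to Lemma~\ref{claimzero},'' so no gap there relative to the paper.
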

\begin{IEEEproof}  
See Appendix~\ref{sec:app3}. The idea is to relate the infinite horizon workload, using Lemma~\ref{claimzero}, to a finite-horizon workload mapping whose quasi-continuity was established earlier.  
\ignore{{\color{magenta} We seem to be repeating too much material here. I think being brief is fine so we could drop the remainder that is in cyan. \color{cyan}
In particular, note that in lieu of~(\ref{eq:qdynamic}), Lemma~\ref{claimzero} implies that 
\(
G^{\mathrm{wc},\RR_s}(\av) = G_{s^*}^{\mathrm{wc}}(\av|_{(0,s^*]})
\) for  $n> n_0$. However,  the quasi-continuity of the finite-horizon workload mapping, 
$G_{s^*}$, and continuity of $\Ish_{s^*}$ allow for the  construction of finite horizon sequence 
$\av^n|_{(0,s^*]}$ so as to ensure the convergence of  $G_{s^*}(\av^n|_{(0,s^*]})$ and 
$\Ish_{s^*}(\av^n|_{(0,s^*]})$. Now, for all $n$, consider the sequence of arrivals $\av^n$ 
which consist of concatenation of  $\av|_{(s^*,\infty]}$ and $\av^n|_{(0,s^*]}$.
It is clear that this sequence converges to $\av$ in scaled uniform topology, hence 
\(
G^{\mathrm{wc},\RR_s}(\av^n) = G_{s^*}^{\mathrm{wc},\RR_s}(\av^n|_{(0,s^*]}) \rightarrow 
G^{\mathrm{wc},\RR_s}(\av) \). In addition, 
Assumptions~\ref{Assumption1}-\ref{Assumption2} ensure that
$\Ish(\av^n) \rightarrow \Ish(\av)$.
}}
\end{IEEEproof}
\begin{lem} \label{lem:almost-compact_G}
The mapping $G^{\mathrm{wc},\RR_s}$ is almost compact on $\D^K_{\vect{\mu}}$ with respect to the scaled uniform topology. 
\end{lem}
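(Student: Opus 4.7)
The plan is to reduce infinite-horizon almost compactness to a finite-horizon boundedness argument, leveraging Lemma~\ref{claimzero} as the key ingredient. I fix an arbitrary $\av \in \D^K_\muv$ and any sequence $\{\av^n\} \subset \D^K_\muv$ with $\av^n \to \av$ in the scaled uniform topology; the goal is to extract a subsequence of $\{G^{\mathrm{wc},\RR_s}(\av^n)\}$ that converges in $\R^K_+$.

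First I would invoke Lemma~\ref{claimzero} to obtain a finite $s^* = s^*(\av)$ and an index $n_0$ such that the stationary workload at time $-s^*$ under $\av$ and under each $\av^n$ with $n > n_0$ lies in the simplex rate region $\RR_s$. The work-conserving property of $H^{\mathrm{wc}}$ (as discussed in the footnote following equation~\eqref{eq:Hwc}) then forces any workload $\Wv \in \RR_s$ to be drained completely in one step: since $H^{\mathrm{wc}}(\Wv) = \Wv$ whenever $\Wv \in \RR_s$, one has $[\Wv - H^{\mathrm{wc}}(\Wv)]^+ = \mathbf{0}$. Consequently the trajectory from time $-s^*+1$ onward coincides with that of a finite-horizon system initialized at zero workload at time $-s^*$, yielding
\[
G^{\mathrm{wc},\RR_s}(\av^n) = G^{\mathrm{wc}}_{s^*}\bigl(\av^n|_{(0,s^*]}\bigr), \quad n > n_0,
\]
and the analogous identity for $\av$.

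Second, I would apply the same elementary bound used in the proof of Lemma~\ref{lem:almostcompact}: since a system started empty at time $-s^*$ has workload at time $0$ coordinate-wise no larger than cumulative arrivals in $(0,s^*]$, we get
\[
G^{\mathrm{wc},\RR_s}(\av^n) \leq \av^n(0,s^*] \quad \text{for } n > n_0.
\]
Because $\av^n \to \av$ in the scaled uniform topology and $s^*$ is finite, $\av^n(0,s^*] \to \av(0,s^*]$ in $\R^K_+$. Hence $\{G^{\mathrm{wc},\RR_s}(\av^n)\}_{n > n_0}$ lies in a bounded subset of $\R^K_+$, and the Bolzano--Weierstrass theorem delivers a convergent subsequence, establishing almost compactness at $\av$.

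The only genuine obstacle is packaged into Lemma~\ref{claimzero}: the existence of a \emph{deterministic} finite truncation horizon $s^*$ that works uniformly along the approximating sequence $\{\av^n\}$. Once that reduction to a finite window is granted, the remaining argument is essentially a direct transplant of the finite-horizon almost compactness proof in Lemma~\ref{lem:almostcompact}, combining the trivial arrivals-bound with Bolzano--Weierstrass in $\R^K_+$.
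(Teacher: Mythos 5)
Your proof is correct and follows essentially the same route as the paper: invoke Lemma~\ref{claimzero} to reduce the infinite-horizon map to the finite-horizon map $G^{\mathrm{wc}}_{s^*}$ on the window $(0,s^*]$ (using the non-idling property that a workload in $\RR_s$ is fully drained), and then conclude via the bound $G^{\mathrm{wc}}_{s^*}(\av^n|_{(0,s^*]})\leq \av^n(0,s^*]$ and Bolzano--Weierstrass exactly as in Lemma~\ref{lem:almostcompact}. Your write-up is in fact more explicit than the paper's two-line proof, which simply cites the reduction from Lemma~\ref{lem:quasi-continuous_G} and Lemma~\ref{lem:almostcompact}.
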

\begin{IEEEproof} 
See Appendix. The proof is along the same lines as that of Lemma~\ref{lem:quasi-continuous_G}.
\end{IEEEproof}
Again, the above lemmas and Garcia's extended contraction principle immediately yield the LDP for the sequence of the infinite-horizon workload in Theorem~\ref{thm:InfiniteHorizonLDP}. 

Let us now consider the problem of calculating the rate function. Eqn. \reqn{defn:J} suggests that the rate function $J$, where $ J(\bv) = \inf_{\av\in \D^K_\vect{\mu}: \sideset{^\av}{^{\mathrm{wc}}}\letG \ni \bv} \Ish(\av),$ could be interpreted as the minimum-cost solution among all paths $\av\in \D^K_\vect{\mu}$ such that $\bv \in \sideset{^\av}{^{\mathrm{wc}}}\letG$, where the cost of the path $\av$ is $\Ish(\av)$ and convex. Hence, the problem of finding the rate functions is a deterministic optimal control problem like those in \cite{Shakk08,Subra08}. 
However, 
the expressions for the rate functions $I_t$ and $J$ in \reqn{eq:defnIt} and \reqn{defn:J} are of little use in their current forms, as their computation is far from straight forward. In the next section, we simplify the rate functions when the arrival processes are limited to having \emph{i.i.d.} increments. 

\section{I.I.D. Increments: Simplified Rate Functions} \label{sect:RateFunction}

In this section, we give a calculation of the finite-horizon and infinite-horizon rate functions in the case when the arrivals have \emph{i.i.d.} increments. In this case, the cost of a sample path $\av\in \D^K$, which is $\Ish(\av)$, is additive and the total cost of any arrival sample path is the sum of the cost over all timeslots and queues. This property helps us to simplify the calculation of the rate functions. 

Consider the underlying arrival process $\Av$ to be a process with \emph{i.i.d.} increments, e.g., a compound Poisson arrival process with exponential packet length (see \cite{KittiHighSNR07}). 
For these \emph{i.i.d.} increment arrival processes, as mentioned earlier, we have
\begin{align*}
\Ish_t(\av|_{(0,t]})   =  \sum_{k=1}^K \sum_{i=1}^{t} \Lambda_1^*(x_i^k), \text{ and }
\Ish(\av)  = \sum_{k=1}^K \sum_{i=1}^{+\infty} \Lambda_1^*(x_i^k).
\end{align*}
Next, we simplify these rate functions.

\subsection{Infinite-Horizon Rate Function}
The following lemma expresses the infinite-horizon rate function $J$ as the infimum of the finite-horizon rate functions $I_t$ over all time $t$. 

\begin{lem} \label{lem:simplified_J}
For \emph{i.i.d.} increment arrival processes with $\boldsymbol{\mu} \in \mbox{int}(\RR_s) $, the infinite-horizon rate function $J$ is simplified as \beq J(\bv) = \inf_{t\ge 1} I_t(\bv).\eeq
\end{lem}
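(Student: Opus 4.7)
I will prove the identity by showing both inequalities $J(\bv) \le \inf_{t \ge 1} I_t(\bv)$ and $J(\bv) \ge \inf_{t \ge 1} I_t(\bv)$. The i.i.d.\ increment structure supplies two crucial simplifications I intend to exploit throughout: the rate function is additive, $\Ish(\av)=\sum_{k,i}\Lambda_1^{*,k}(a_i^k)$ with non-negative summands; and the per-slot cost vanishes at the mean, $\Lambda_1^{*,k}(\mu^k)=0$, so padding an arrival path with the mean rate $\muv$ costs nothing.

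\textbf{Upper bound.} Fix $t\ge 1$ and $\epsilon>0$. Pick $\xv\in\R^{K\times t}_+$ with $\bv\in\sideset{^\xv}{^{\mathrm{wc}}_t}\letG$ and $\Ish_t(\xv)\le I_t(\bv)+\epsilon$, together with a witnessing sequence $\xv^n\to\xv$ in product topology satisfying $G^{\mathrm{wc}}_t(\xv^n)\to\bv$. Extend to infinite paths $\av^n,\av\in\D^K_{\muv}$ by setting $\av_s=\av_s^n=\muv$ for $s>t$; a short estimate shows $\av^n\to\av$ in scaled-uniform norm, and both sit in $\D^K_{\muv}$. I then track the queue through the padded tail: since $\muv\in\mathrm{int}(\RR_s)\subset\prod_k[0,C_k)$, the work-conserving scheduler gives $H^{\mathrm{wc}}(\muv)=\mathrm{Proj}_{\RR_s}(\muv)=\muv$, so from workload $\mathbf{0}$ at $-T$ the workload stays pinned at $\muv$ throughout the padded window and the workload at time $-t$ is cleared before receiving the first ``real'' arrival $\xv^n_t$. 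Hence $G^{\mathrm{wc}}(\av^n)=G^{\mathrm{wc}}_t(\xv^n)\to\bv$, which gives $\bv\in\sideset{^\av}{^{\mathrm{wc}}}\letG$. Because $\Lambda_1^{*,k}(\mu^k)=0$, the padded tail contributes nothing to the cost, so $\Ish(\av)=\Ish_t(\xv)\le I_t(\bv)+\epsilon$ and $J(\bv)\le I_t(\bv)+\epsilon$. Letting $\epsilon\downarrow 0$ and infimizing over $t$ closes this direction.

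\textbf{Lower bound.} Take any $\av\in\D^K_{\muv}$ with $\bv\in\sideset{^\av}{^{\mathrm{wc}}}\letG$ and $\Ish(\av)<\infty$, and pick a witnessing sequence $\av^n\to\av$ in scaled-uniform topology with $G^{\mathrm{wc}}(\av^n)\to\bv$. Invoke Lemma~\ref{claimzero} to extract $s^*=s^*(\av)$ and $n_0$ such that $G^{\mathrm{wc}}(\av^n|_{(s^*,\infty)})\in\RR_s$ for $n>n_0$. Any $\Wv\in\RR_s$ is cleared in one step by the work-conserving max-weight scheduler (strictly interior points because $H^{\mathrm{wc}}=\mathrm{Proj}_{\RR_s}$ fixes them; and the only boundary points of $\RR_s$ with some coordinate equal to $C_k$ are the vertices $C_k\e_k$, which $H$ itself serves fully). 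Therefore $G^{\mathrm{wc}}(\av^n)=G^{\mathrm{wc}}_{s^*}(\av^n|_{(0,s^*]})$ for $n>n_0$. Scaled-uniform convergence implies coordinate-wise convergence on the first $s^*$ slots, so $\av^n|_{(0,s^*]}\to\av|_{(0,s^*]}$ in the product topology and $\bv$ is a cluster point of $G^{\mathrm{wc}}_{s^*}$ at $\av|_{(0,s^*]}$. This yields
\[
I_{s^*}(\bv)\le \Ish_{s^*}(\av|_{(0,s^*]})=\sum_{k=1}^K\sum_{i=1}^{s^*}\Lambda_1^{*,k}(a_i^k)\le \Ish(\av),
\]
by non-negativity of the per-slot cost. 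Infimizing over $\av$ gives $\inf_t I_t(\bv)\le J(\bv)$.

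\textbf{Main obstacle.} The delicate direction is the lower bound: I need to trade the infinite-horizon map $G^{\mathrm{wc}}$ for a finite-horizon one $G^{\mathrm{wc}}_{s^*}$ without losing the cluster-point relation $\bv\in\sideset{^\av}{^{\mathrm{wc}}}\letG$. The essential input is Lemma~\ref{claimzero}, which rides on stability ($\muv\in\mathrm{int}(\RR_s)$) and on the simplex/work-conserving geometry that clears any $\RR_s$-workload in a single step; every other ingredient (additivity of $\Ish$, vanishing of $\Lambda_1^{*,k}$ at the mean, and the reduction of scaled-uniform convergence to coordinate-wise convergence on a fixed window) is structural.
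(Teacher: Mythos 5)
Your proposal is correct and follows essentially the same route as the paper's proof: the lower bound via Lemma~\ref{claimzero} to replace $G^{\mathrm{wc}}$ by $G^{\mathrm{wc}}_{s^*}$ (using that any workload in $\RR_s$ is cleared in one step by the work-conserving scheduler), and the upper bound by padding a near-optimal finite-horizon path with the mean $\muv$, which costs nothing since $\Lambda_1^{*,k}(\mu^k)=0$. Your write-up merely makes explicit the two inequalities and the cluster-point bookkeeping that the paper compresses into a single chain of equalities.
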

\begin{IEEEproof}
The cost of a sample path over time is the sum of the cost of arrivals in all timeslots. As in the proof of Lemma~\ref{lem:quasi-continuous_G}, for $\av \in \D^K_\muv$ where $\boldsymbol{\mu} \in \mbox{int}(\RR_s)$, we can find $t := s^*(\av)$ such that $\Wv_{t}(\av) \in \RR_s$. Hence, for $\av$ such that $ \bv\in\sideset{^\av}{^{\mathrm{wc}}}\letG$, one can reduce the cost of the path by constructing a new sample-path $\tilde{\av}$ by setting $\tilde{\av}_v = \muv$ for all $v > t$ and $\tilde{\av}_v=\av_v$ for all $v\leq t$ while still satisfying $\bv \in\sideset{^{\tilde{\av}}}{^{\mathrm{wc}}}\letG $. This is because $\Ish(\tilde{\av}) = \Ish_{t}(\av|_{(0,t]})\leq \Ish(\av)$. On the other hand, since $\Wv_{t}(\av) \in \RR_s$, we can write $\bv \in \sideset{^{\av|_{(0,t]}}}{^{\mathrm{wc}}_t}\letG$. All of these imply that   
\begin{align*} 
J(\bv) = \inf_{\av\in \D^K_\muv: \sideset{^\av}{^{\mathrm{wc}}}\letG \ni \bv} \Ish(\av)  = \inf_{t\ge 1} \ \inf_{\xv \in \R^{Kt}_+: \sideset{^\xv}{^{\mathrm{wc}}_t}\letG \ni \bv} \Ish_t(\xv) = \inf_{t\ge 1} I_t(\bv),
\end{align*} 
 by the definition of $I_t(\bv)$ in \reqn{eq:defnIt}.
\end{IEEEproof}
With this simplification available, we now look at the finite-horizon rate function $I_t$ in more details.

\subsection{Finite-Horizon Rate Function}

In this subsection, we provide a further simplified expression of the finite-horizon rate function $I_t$. 

\begin{lem} \label{lem:RateFunction}
For $t\in \N$, the finite-horizon rate function $I_t$ is simplified as  
\beq \label{eq:It1} 
I_t(\bv) = \min\left(\Ish_1(\bv),\min_{u\in (1,t]} \ \inf_{\xv \in \A(u,\bv)} \Ish_u(\xv) \right)
\eeq
for $\bv \in \R^K_+$, where for $u>1$
\begin{align} 
\A(u,\bv) := \{\xv \in \R^{K\times u}_+: \bv \in \sideset{^\xv}{^{\mathrm{wc}}_u}\letG, G_{u-v}^{\mathrm{wc}}(\xv|_{(v,u]})\not\in \RR_s, \forall v\in [1,u-1]\}.\label{eq:Aub}
\end{align}
\end{lem}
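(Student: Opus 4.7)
The plan is to prove both inequalities via a busy-period decomposition of the arrival path, exploiting a clean regenerative property of the work-conserving max-weight scheduler on the simplex $\RR_s$. The central observation is this: whenever the workload $\Wv_u$ at time $-u$ lies in $\RR_s$, then $H^{\mathrm{wc}}(\Wv_u) = \Wv_u$ and the post-service workload is $\mathbf{0}$. Indeed, either $\Wv_u \in \RR_s \cap \Pi_{k=1}^K[0,C_k)$ and $\text{Proj}_{\RR_s}(\Wv_u) = \Wv_u$; or some coordinate of $\Wv_u$ reaches $C_k$, in which case membership in $\RR_s$ forces $\Wv_u = C_k \e_k$ and $H(\Wv_u) = \Wv_u$ is the unique max-weight vertex. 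Consequently, for any arrivals $\yv$ with $\Wv_u(\yv) \in \RR_s$, we have $G_t^{\mathrm{wc}}(\yv) = G_u^{\mathrm{wc}}(\yv|_{(0,u]})$ and $\Wv_v(\yv) = G_{u-v}^{\mathrm{wc}}(\yv|_{(v,u]})$ for $0 \le v \le u$.

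For the upper bound $(\le)$, extend any candidate $\tilde{\xv} \in \A(u,\bv)$ to a $t$-slot path at no extra cost. Construct $\xv \in \R_+^{K \times t}$ by prepending $t-u$ copies of $\muv$ to $\tilde{\xv}$. Since $\muv \in \mathrm{int}(\RR_s) \subseteq \RR_s \cap \Pi_{k=1}^K[0,C_k)$, feeding $\muv$ into an initially empty queue keeps the workload equal to $\muv \in \RR_s$ throughout the prefix, so regeneration at time $-u$ yields $G_t^{\mathrm{wc}}(\xv) = G_u^{\mathrm{wc}}(\tilde{\xv})$. Applying the same padding to an approximating sequence $\tilde{\xv}^n \to \tilde{\xv}$ with $G_u^{\mathrm{wc}}(\tilde{\xv}^n) \to \bv$ gives $\xv^n \to \xv$ with $G_t^{\mathrm{wc}}(\xv^n) \to \bv$, hence $\bv \in \sideset{^\xv}{^{\mathrm{wc}}_t}\letG$. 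By i.i.d.-increment additivity of $\Ish_t$ and $\Lambda_1^{*,k}(\mu^k) = 0$, $\Ish_t(\xv) = \Ish_u(\tilde{\xv})$, so $I_t(\bv) \le \Ish_u(\tilde{\xv})$. The direct construction $\xv = (\muv, \ldots, \muv, \bv)$ handles the $u = 1$ term, giving $I_t(\bv) \le \Ish_1(\bv)$. Infimizing over $\A(u,\bv)$ and $u$ completes this direction.

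For the lower bound $(\ge)$, since $\Ish_t$ is a good rate function and the feasibility set $\{\xv : \bv \in \sideset{^\xv}{^{\mathrm{wc}}_t}\letG\}$ is closed, the infimum defining $I_t(\bv)$ is attained at some $\xv^*$. Let $u^* := \min\{u \in \{1,\ldots,t\} : \Wv_u(\xv^*) \in \RR_s\}$; this is finite since $\Wv_t(\xv^*) = \mathbf{0}$. Regeneration at time $-u^*$ together with the minimality of $u^*$ enforces the conditions defining $\A(u^*,\bv)$ on the tail $\xv^*|_{(0,u^*]}$, up to a subtlety from the cluster-point formulation of $\sideset{^\cdot}{^{\mathrm{wc}}_u}\letG$. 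If $u^* = 1$, this forces $\xv^*_{-1} = \bv$, and additivity with the nonnegativity of $\Lambda_1^*$ on the remaining slots yields $\Ish_t(\xv^*) \ge \Ish_1(\bv)$. If $u^* > 1$, additivity gives $\Ish_t(\xv^*) \ge \Ish_{u^*}(\xv^*|_{(0,u^*]})$, and the continuity of $\Ish_{u^*}$ (Assumption~\ref{Assumption-cont}) together with a small-perturbation argument yields $\Ish_t(\xv^*) \ge \inf_{\A(u^*,\bv)} \Ish_{u^*}$. The main technical obstacle is exactly this last reconciliation step: the open condition $\notin \RR_s$ and the cluster-point formulation of $\sideset{^\cdot}{^{\mathrm{wc}}_u}\letG$ in the definition of $\A(u,\bv)$ must be carefully matched when passing to the limit along an approximating sequence, and Assumption~\ref{Assumption-cont} on the continuity of $\Ish_u$ is what makes the approximation valid without moving the infimum.
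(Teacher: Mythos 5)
Your proposal is correct and follows essentially the same route as the paper's proof: decompose at the last time $-u$ the workload lies in $\RR_s$ (using the regeneration property of $H^{\mathrm{wc}}$ on the simplex), truncate or pad with $\muv$ at zero cost via the i.i.d.-increment additivity of $\Ish$, and identify the resulting tail with an element of $\A(u,\bv)$. The cluster-point reconciliation you flag as the main technical obstacle is likewise left implicit in the paper's own argument, so your write-up is, if anything, slightly more explicit about where the care is needed.
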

\begin{IEEEproof} This follows the idea from the proof of Lemma~\ref{lem:simplified_J}. Let $t \in \N$.
For $\av\in \D^K_\muv$ such that $\bv \in \sideset{^\av}{^{\mathrm{wc}}_t}\letG$, we let 
\begin{align*}
u =  \min\bigg\{t, \min\Big\{s\in [1,t-1]:  \Wv_s = G_{t-s}^{\mathrm{wc}}(\av|_{(s,t]}) \in \RR_s\Big\}\bigg\}.
\end{align*}
In other words, $-u$ is the last time the workload vector is inside the capacity region $\RR_s$ before time $0$.  With this definition of $u$, for all $u>1$ we have $\Wv_{v} \not\in \RR_s$ for all $v\in [1,u-1]$. By definition of $I_t$, we already know that the workload vector starts initially inside $\RR_s$ at time $-t$. Therefore, we can find another path $\tilde{\av} \in \R^{Kt}_+$ with a reduced cost while keeping the workloads at time $-u+1$ to $0$ (i.e., $\Wv_{u-1}$ to $\Wv_0$) intact by setting $\tilde{\av}_v = \muv, \forall v\in (u,t]$ and $\tilde{\av}_v = \av_v$ otherwise. It is easy to see that we have $\Ish_t(\av|_{(0,t]}) \ge \Ish_u(\av|_{(0,u]}) = \Ish_{t}(\tilde{\av}|_{(0,t]})$ and yet $\bv \in \sideset{^{\tilde{\av}|_{(0,u]}}}{^{\mathrm{wc}}_u}\letG = \sideset{^{\tilde{\av}}}{^{\mathrm{wc}}_t}\letG$. The same logic also applies to the case when $u=1$. However, in this case the only way that we can achieve a workload vector $\bv$ at $0$ is if exactly that amount of work arrives, i.e., if $\av_1=\bv$.

Since by definition $\Wv_v = G_{u-v}^{\mathrm{wc}}(\av|_{(v,u]})$ for $v\in [1,u-1]$ (when $u>1$), we have 
\begin{align*} 
I_t(\bv) = \inf_{\xv \in \R^{Kt}_+: \sideset{^\xv}{^{\mathrm{wc}}_t}\letG \ni \bv}  \Ish_t(\xv|_{(0,t]}) & = \min\Biggl(\Ish_1(\bv),
\min_{u\in (1,t]} \ \inf_{\substack{\xv \in \R^{Ku}_+: \bv \in  \sideset{^\xv}{^{\mathrm{wc}}_u}\letG , \\G^{\mathrm{wc}}_{u-v}(\xv|_{(v,u]}) \not\in \RR_s}} \Ish_u(\xv)\Biggr)\\
& = \min\left(\Ish_1(\bv),\min_{u\in (1,t]} \ \inf_{\xv \in \A(u,\bv)} \Ish_u(\xv|_{(0,u]})\right),
\end{align*}
where $\A(u,\bv)$ is defined as in \reqn{eq:Aub}.
\end{IEEEproof}

\begin{note} The above lemma reduces the set of feasible sample paths to the set $\A(u,\bv)$ for $u\in (0,t]$. 
It is interesting to note the property of the sample-paths in this set. For any $\xv \in \A(u,\bv)$, we have $\hat{\Wv}_0(\xv) = \hat{\xv}(0,u] - (u-1) = \hat{\bv}$, recalling that the $\hat{\cdot}$ notation is the normalized sum of the elements of the vectors. There is no wastage of service capacity over the $u-1$ timeslots because $\forall v\in [1,u-1]$, $\Wv_v = G^{\mathrm{wc}}_{u-v}(\xv|_{(v,u]})\not\in \RR_s$ and hence $\hat{\Wv}_v > 1$.  That is, any sample path $\xv \in \A(u,\bv)$ has its normalized sum of the arrivals over time $(0,u]$ and queues equal to $\hat{\xv}(0,u] =  \hat{\bv} + (u-1)$. 
\end{note}

In addition, an immediate implication of Lemma \ref{lem:RateFunction} is that we can rewrite $J$ in  \reqn{defn:J} as 
\begin{align} \label{eq:revisedJ} 
J(\bv) = \inf_{t\ge 1} \ I_t(\bv)=\inf\Big(I_1(\bv),\inf_{t\ge 2} I_t(\bv)\Big) & = \inf\left(\Ish_1(\bv),\inf_{t\ge 2}  \min_{u\in (1,t]} \ \inf_{\xv \in \A(u,\bv)} \Ish_u(\xv|_{(0,u]})\right)\nn\\ 
& = \inf\left(\Ish_1(\bv),\inf_{t\ge 2}  \inf_{\xv \in \A(t,\bv)} \Ish_t(\xv|_{(0,t]}\right),
\end{align} 
where we also used the fact that $I_1(\bv)=\Ish_1(\bv)$.
If we denote $t^*$ as the optimizer of the last equation, then $t^*$ is called the \emph{critical timescale} (see \cite{Wisch01}). 
It can then be interpreted that $t^*$ is the most likely length of time it would take to ``fill" the buffers to a given level $\bv$ from being ``empty'' (more precisely, anywhere within $\RR_s$). 

\begin{note}\label{rem:rate_calc}
The induction-based proof of Lemma~\ref{lem:continuity_Gt} reveals another important property of the sample-paths, namely, that at every stage it suffices to consider the quasi-continuous selection $H^{\mathrm{wc}}(\cdot)$ of the scheduling function. This then implies that we need to consider all valid arrivals sequences that respect the constraints of the set $\A(\cdot,\bv)$ such that using any of the allowed (based on the workload vector at each time) scheduling actions given by $H^{\mathrm{wc}}(\cdot)$ results in the workload vector $\bv$ at the required epoch.
\end{note}

Note that for fixed $u\in \N$, $\inf_{\xv \in \A(u,\bv)} \Ish_u(\xv|_{(0,u]})$ is a optimization problem, with a convex cost function $\Ish_u(\cdot)$ and a set $\A(u,\bv)$ of allowable solutions, in general, a $K(u-1)$-dimensional set. This problem is difficult to solve analytically. Since the cost function $\Ish_u(\xv|_{(0,u]}) = \sum_{i=1}^u \sum_{k=1}^K\Lambda_1^{*,k}(\xv_i)$ is additive, a possible numerical method is the numerical backwards induction of dynamic programming. However, the method suffers from the curse of dimensionality and hence is not practical for large $u$ and $\bv$.  
Hence, we turn our attention to finding some simplified bounds of the rate functions. This can be done by employing the additivity and convexity of the rate function $\Ish_t$. Next we present some bounds for the case when $K=2$. 

\subsection{Properties of the Minimum-Cost Sample Paths}
Here, we see that the convexity of the cost function $\Lambda_1^{*,k}$ for all $k\in\K$, induces two properties for the optimal paths. 

\begin{property} \label{Property1}
\emph{Constant-speed linear path is the cheapest}. Among all arrival sample paths $\xv \in \D^K_\muv$  with the only constraint that $\xv(0,t] = \bv$, i.e., the total amount of arrivals at the end of time $t\in\N$ equals $\bv$, the cheapest or minimum-cost path is the constant-speed linear path, where the arrival in each timeslot is equal to $\bv/t$. 
\end{property}
\begin{IEEEproof}
This is because the path cost function is additive, i.e., $\Lambda^*_t(\xv) = \sum_{k=1}^K\sum_{i=1}^t  \Lambda_1^{*,k}(x_i^k)$, and the per-timeslot cost function $\Lambda_1^{*,k}$ is convex.  
Applying Jensen's inequality \cite{Rocka70} gives 
\begin{align*}
\Lambda^*_t(\xv) & = \sum_{k=1}^K\sum_{i=1}^t  \Lambda_1^{*,k}(x_i^k) \ge \sum_{k=1}^K t \Lambda_1^{*,k}\left(\frac{1}{t} \sum_{i=1}^t x_i^k\right) =  \sum_{k=1}^K t \Lambda_1^{*,k}(b^k/t),
\end{align*}
with equality when $x_i^k = b^k/t$ for all $i$ and $k$. See an illustration in Figure~\ref{fig:Prop1}. 
\end{IEEEproof}

From now onwards, without loss of generality, we assume that the arrivals, workloads and service vectors are scaled by $1/\mathbf{C}$ where $\mathbf{C}=(C^1,\dotsc,C^K)$ is the vector of maximum service rates. In this normalized setting, the maximum service rate of all the users is $1$. Assume also that $\Lambda_1^{*,k}$ for $k\in\K$ is suitably modified for this scaling, and, for ease of exposition, that with this scaling the processes are statistically identical, i.e., $\Lambda_1^{*,k}\equiv\Lambda_1^{*}$ for all $k\in\K$. We can then write down the following property. 
\begin{property} \label{Property2}
\emph{Constant-speed linear path closest to the equal line is the cheapest.} 
For constant-speed linear paths $\av \in \R^{Kt}_+$ with the sum $\hat{\av}(0,t] = d$, 
the cost of the path is a Schur-convex function~\cite{MarshallOlkin1979}.
\end{property}
\begin{IEEEproof}
Since the arrival paths are constant-speed linear path, without loss of generality we can consider arrival paths in a single timeslot. Consider path $\xv\in\R_+^K$, then the cost of the path is $\sum_{k=1}^K \Lambda_1^*(x^k)$ where $\Lambda_1^*(\cdot)$ is a convex function. Therefore from the results in \cite{MarshallOlkin1979} it follows that $\sum_{k=1}^K \Lambda_1^*(x^k)$ is a Schur-convex function. In order words, if $\xv$ is majorized by $\yv\in\R_+^K$ (denoted by $\xv \prec\yv$), i.e., $\sum_{k=1}^j x^{[i]} \leq \sum_{k=1}^j y^{[i]}$ for $j=1,\dotsc,K-1$ and $\sum_{k=1}^K x^k=\sum_{k=1}^K y^k$ where $x^{[i]}$ is the $i^{\mathrm{th}}$ largest component of $\xv$, then $\sum_{k=1}^K \Lambda_1^*(x^k)\leq\sum_{k=1}^K \Lambda_1^*(y^k)$. 

This is easily appreciated when $K=2$. Let $\xv=(x,d-x) \in \R^2_+$ and $\yv=(y,d-y)\in \R^2_+$, where $y>x>d/2$, then we have $\Lambda_1^{*}(x)+\Lambda_1^{*}(d-x) \le \Lambda_1^{*}(y)+\Lambda_1^{*}(d-y)$, and $\xv$ is cheaper than $\yv$. We illustrate this in Figure~\ref{fig:Prop2}.
\end{IEEEproof}

\begin{figure}[tbp]
\centering
    \subfigure[Property 1]{
        \includegraphics[width=.75\linewidth]{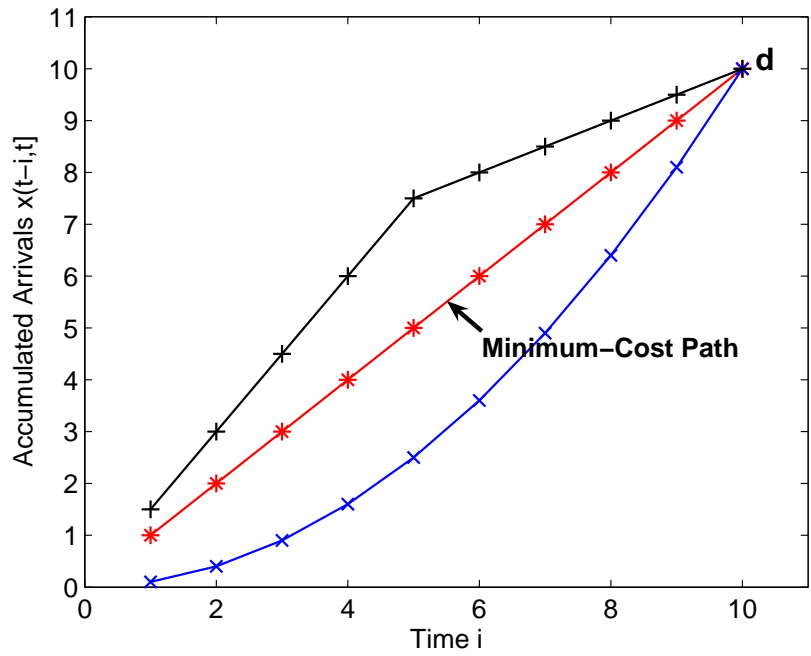} \label{fig:Prop1}}
        \hfill
    \subfigure[Property 2]{
        \includegraphics[width=.75\linewidth]{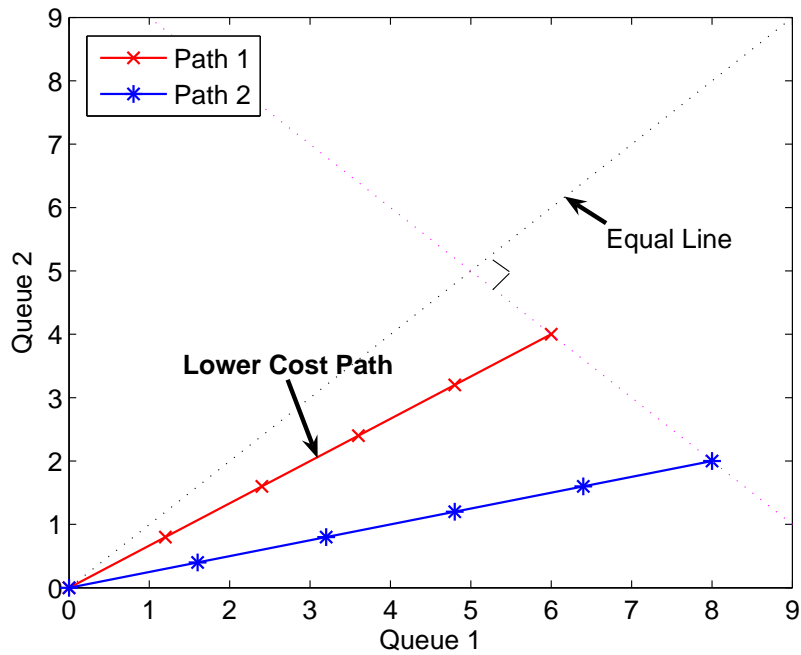} \label{fig:Prop2}}
    \hfill
\caption[Two properties of the minimum-cost sample paths]{Two properties of the minimum-cost sample paths: (a) Property 1: the minimum-cost path is the constant-speed linear path. 
(b) Property 2: 
 Path 1 which is closer to the Equal Line has a lower cost than Path 2.} 
\end{figure}

These properties are also used in \cite{Berts98,Ying06,Shakk08} for large-deviations analysis of scheduling disciplines. Next, we use these properties to calculate $I_2$ and bounds on $I_t$ for $t \in \N$ for just the work-conserving scheduler operating on the simplex rate-region. 

\subsection{Example: Calculation of $I_2$}
Here we look at an example for calculation of the finite-horizon rate function $I_t$ to illustrate that the calculation continues to be rather involved. 
For simplicity, consider the case when $t=2$ and $K=2$. 
From \reqn{eq:It1}, $I_2(\bv)$ for $\bv\in \R^2_+$ can be written as 
\begin{align} 
 I_2(\bv)  =  &  \min\Big\{\sum_{i=1}^2\Lambda_1^{*}(b^i), \inf_{\xv|_{(0,2]} \in \A(2,\bv)} \sum_{i=1}^2\sum_{k=1}^2\Lambda_1^{*}(x_i^k)\Big\},\label{eq:I2a} 
\end{align}
where 
$$ 
\A(2,\bv) = \left\{\av|_{(0,2]} \in \R^4_+: \av_2 \not\in \RR_s, \bv \in \sideset{^{\av|_{(0,2]}}}{^{\mathrm{wc}}_2}\letG\right\}.
$$ 
The workload at time zero is $\Wv_0 = G_2^{\mathrm{wc}}(\av|_{(0,2]}) = \av_1 + [\av_2-H^{\mathrm{wc}}(\av_2)]^+$, which is equal to $\av(0,2] -H^{\mathrm{wc}}(\av_2)$ since $\av_2 \not\in \RR_s$. On the other hand, we require $\Wv_0 =\bv$. Hence, using the scheduler $H^{\mathrm{wc}}$ given in \reqn{eq:Hwc}, 
we can express $\A(2,\bv)$ as $\A(2,\bv) = \A_{(1)} \cup  \A_{(2)} \cup  \A_{(3)},$ where $\A_{(j)} \subseteq \R^4_+, j=1,2,3$, are defined as 
\begin{align*}
\A_{(1)} &:= \{(\av_1,\av_2) \in \R^4_+: a_2^1\ge a_2^2, a_2^1\ge 1,  \av(0,2]  = \bv + (1,0)\},\\
\A_{(2)} &:= \{(\av_1,\av_2)\in \R^4_+: a_2^1 \le a_2^2, a_2^2\ge 1,  \av(0,2]  = \bv + (0,1)\},\\
\A_{(3)} &:= \{(\av_1,\av_2)\in \R^4_+: a_2^1 \le 1, a_2^2 \le 1, a_2^1 + a_2^2 \ge 1,  \av(0,2] = \bv + \text{Proj}_{\RR_s}(\av_2)\}.
\end{align*} 
Note that in the definition of $\A_{(1)}$ and $\A_{(2)}$ we have used the property highlighted in Remark~\ref{rem:rate_calc}. For the two user case the scheduling function $H^{\mathrm{wc}}(\cdot)$ is only discontinuous at $\xv\in\R^2_+$ such that $x^1=x^2\geq 1$. Here one can either choose the service vector $(1,0)$ or $(0,1)$ to obtain a quasi-continuous selection. Thus, both of these options have to be considered.

Using these newly defined sets the second term in the RHS of \reqn{eq:I2a} can be rewritten as 
\begin{align*}
\inf_{\xv \in \A(2,\bv)} \sum_{i=1}^2\sum_{k=1}^2\Lambda_1^{*}(x_i^k)  =\min_{j\in [1,3]} \inf_{\xv \in \A_{(j)}}\sum_{i=1}^2\sum_{k=1}^2\Lambda_1^{*}(x_i^k).
\end{align*}
Trajectories of some examples of the (accumulated) arrival sample paths are illustrated in Figure~\ref{fig:I2a} and their corresponding workload trajectories in Figure~\ref{fig:I2b}. In particular, Figure~\ref{fig:I2a} shows example trajectories of the accumulated arrival sample paths $\av|_{(0,2]}\in \A_{(j)}, j=1,2,3$, in the calculation of $I_2(\bv)$, where $\bv = (4,2)$. Also, for particular values of $\av|_{(0,2]} \in \A_{(1)}$, namely, $\av_{2} = (2.5,1)$ and $\av(0,2]= (5,2)$ Figure~\ref{fig:I2b} shows the workload paths $\Wv^{(1)}$. The same figure also displays examples corresponding to arrival paths in $\A_{(j)}, j=2,3$. For the specific example from $\A_{(1)}$ the figure shows that $\Wv^{(1)}_1 = \av_{2} = (2.5,1)$ and $\Wv^{(1)}_0 = \av(0,2] - (1,0) = (4,2)$.

This example underlines the difficulty in finding the rate function even for small timescales. We expect that the number of constrained sets like $\A_{(j)}$ will grow exponentially with time duration $t$. However, the example gives us some insight on how to find some simple upper and lower bounds to $I_t$ for any $t\in \N$. 

\begin{figure}[tbp]
\centering
    \subfigure[Trajectories of Arrival Paths]{
        \includegraphics[width=.75\linewidth]{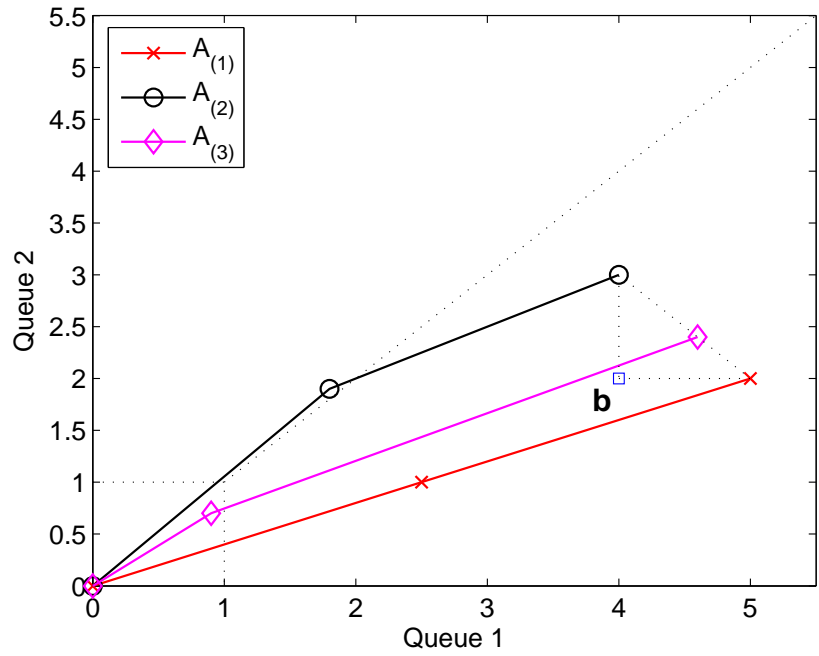} \label{fig:I2a}}
        \hfill
    \subfigure[Trajectories of Workload Paths]{
        \includegraphics[width=.75\linewidth]{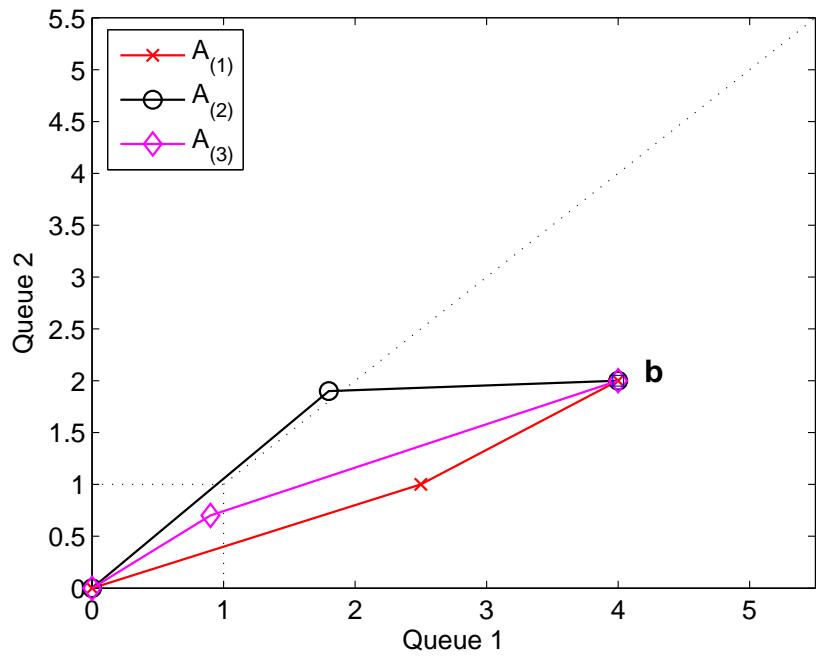} \label{fig:I2b}}
    \hfill
\caption[Example of accumulated arrival and workload paths for $I_2$]{Example of accumulated arrival and workload paths for calculation of $I_2(\bv)$.} 
\label{fig:I2} 
\end{figure}

\subsection{Bounds on $I_t$}
In this subsection, we find simple expressions that give lower or upper bounds to $\inf_{\xv \in \A(u,\bv)} \Ish_u(\xv)$, which in turn give the bounds to $I_t$ and $J$. We focus on $K=2$ but similar results can be obtained for general $K$.

\begin{lem} \label{lem:MSLDPItBounds}
For $K=2$, $\bv \in \R^2_+$, $I_t(\bv)$ can be bounded as 
\beq I_t(\bv) \ge \min_{u\in (0,t]} u\sum_{k=1}^K\Lambda_1^*\left(\frac{1}{u} \Big(\text{Proj}_{\XXX(u,\bv)}(\vect{0})\Big)^k\right) \label{eq:ItLB}\eeq
and when $\bv \not\in [0,1)^2$, \beq I_t(\bv) \le \min_{u\in (0,t]} u\sum_{k=1}^K\Lambda_1^*\left(\frac{1}{u}(b^k+(u-1)H(\bv)^k)\right), \label{eq:ItUB}\eeq
where the convex set $\XXX(u,\bv) \subseteq \R^2_+$ is defined as
\begin{align} 
\XXX(u,\bv) :=  \{\bv+\mathbf{v}:  \quad v^1+v^2=(u-1), v^1,v^2\ge 0\}.
\end{align}
\end{lem}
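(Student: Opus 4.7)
The plan is to analyse the characterization
$I_t(\bv)=\min\{\Ish_1(\bv),\ \min_{u\in(1,t]}\inf_{\xv\in\A(u,\bv)}\Ish_u(\xv)\}$
from Lemma~\ref{lem:RateFunction} by bounding the inner infimum separately for each $u$. The case $u=1$ matches both bounds automatically because $\XXX(1,\bv)=\{\bv\}$ forces $\text{Proj}_{\XXX(1,\bv)}(\vect{0})=\bv$ and the factor $(u-1)H(\bv)$ vanishes, so each bound's $u=1$ summand collapses to $\Ish_1(\bv)=\sum_k\Lambda_1^*(b^k)$.

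For the lower bound with $u\ge 2$, I would first exploit that every $\xv\in\A(u,\bv)$ has workload trajectory strictly outside $\RR_s$ at times $-1,\dots,-(u-1)$. On the simplex rate region this forces the work-conserving max-weight scheduler to consume exactly one unit of normalized capacity in each of those $u-1$ slots (and zero in the first slot, where $\Wv_u=\vect{0}$). Combined with the conservation identity $\Wv_0=\xv(0,u]-\sum_{t=1}^u\Rvec_t$ and coordinatewise non-negativity of the service allocations, this yields $\xv(0,u]\in\XXX(u,\bv)$, as already recorded in the Remark after Lemma~\ref{lem:RateFunction}. Relaxing the constraint set to just this sum condition only decreases the infimum, so
\begin{align*}
\inf_{\xv\in\A(u,\bv)}\Ish_u(\xv)\ \ge\ \inf_{\vect{y}\in\XXX(u,\bv)}\ \inf_{\substack{\xv\in\R_+^{2u}\\\xv(0,u]=\vect{y}}}\Ish_u(\xv).
\end{align*}
Property~\ref{Property1} (Jensen's inequality on the additive, per-slot convex $\Lambda_1^*$) reduces the inner infimum to the constant-speed path $\xv_v\equiv\vect{y}/u$ with cost $u\sum_k\Lambda_1^*(y^k/u)$, and Property~\ref{Property2} (Schur-convexity of $\vect{y}\mapsto\sum_k\Lambda_1^*(y^k/u)$ on the hyperplane $\{\vect{y}:\hat{\vect{y}}=\hat{\bv}+u-1\}$ that contains $\XXX(u,\bv)$) then pinpoints the minimizer on this line segment as its most-majorized element, i.e.\ the $\vect{y}\in\XXX(u,\bv)$ whose coordinates are closest to equal. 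Because $\XXX(u,\bv)$ is a segment of a hyperplane orthogonal to $(1,1)$, this most-majorized point coincides with the Euclidean projection $\text{Proj}_{\XXX(u,\bv)}(\vect{0})$, yielding the claimed lower bound after taking the minimum over $u\in[1,t]$.

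For the upper bound under $\bv\not\in[0,1)^2$, I would, for each $u\in(1,t]$, exhibit the single witness path $\xv^\star_v\equiv\mathbf{c}:=\tfrac{1}{u}\bigl(\bv+(u-1)H(\bv)\bigr)$ and verify that $\xv^\star\in\A(u,\bv)$; the cost $\Ish_u(\xv^\star)=u\sum_k\Lambda_1^*(c^k)$ then caps the $u$-th term of the outer minimum. Feasibility is checked by a forward induction on the trajectory starting from $\Wv_u=\vect{0}$: assuming inductively that the scheduler selects $H(\bv)$ at every step, the constant-arrival recursion gives $\Wv_{u-k}=k\mathbf{c}-(k-1)H(\bv)$, so in particular $\Wv_0=\bv$ at $k=u$. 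Taking WLOG $b^1\ge b^2$ so that $H(\bv)=(1,0)$, the identities
$W^1_{u-k}-W^2_{u-k}=[k(b^1-b^2)+(u-k)]/u\ge 0$
and
$W^1_{u-k}=1+(u-k)(b^1-1)/u$
show that $H$ keeps picking $(1,0)$ and that $W^1_{u-k}\ge 1$ precisely when $b^1\ge 1$, which is the content of $\bv\not\in[0,1)^2$ under the WLOG ordering. This places $\Wv_{u-k}\not\in[0,1)^2$, so $H^{\mathrm{wc}}$ coincides with $H$ along the orbit and the induction closes; the companion identity $\hat{\Wv}_{u-k}=1+k(\hat{\bv}-1)/u$ keeps the trajectory strictly outside $\RR_s$ whenever $\hat{\bv}>1$.

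The main obstacle I anticipate is the borderline case $\hat{\bv}=1$ (e.g.\ $\bv=(1,0)$), where the witness trajectory sits on $\partial\RR_s$ rather than strictly outside and so narrowly violates the membership condition of $\A(u,\bv)$. I would handle this by a standard perturbation argument: apply the already-established upper bound at $\bv_\epsilon:=\bv+\epsilon H(\bv)$ (which satisfies $\hat{\bv_\epsilon}=1+\epsilon>1$ while $H(\bv_\epsilon)=H(\bv)$), and pass to the limit $\epsilon\downarrow 0$ using continuity of $\Lambda_1^*$ on its effective domain. A minor secondary subtlety is that $H(\bv)$ is set-valued on the diagonal $b^1=b^2$; by Lemma~\ref{lem:cont_H} a quasi-continuous selection exists, and by coordinate-symmetry either choice $(1,0)$ or $(0,1)$ satisfies the same inductive invariant.
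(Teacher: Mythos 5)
Your proposal is correct and follows essentially the same route as the paper: relax $\A(u,\bv)$ to the destination constraint $\xv(0,u]\in\XXX(u,\bv)$ and invoke Properties~\ref{Property1} and~\ref{Property2} for the lower bound, then exhibit the constant-speed witness path $\tfrac{1}{u}(\bv+(u-1)H(\bv))$ and verify by induction that the scheduler keeps selecting $H(\bv)$ for the upper bound. You are in fact slightly more careful than the paper in flagging the borderline case $\hat{\bv}=1$ (where the witness trajectory sits on $\partial\RR_s$ and so is not in $\A(u,\bv)$); aside from a harmless index slip ($W^1_{u-k}=1+k(b^1-1)/u$, not $1+(u-k)(b^1-1)/u$), the argument is sound.
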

\begin{IEEEproof} See Appendix~\ref{sec:app4}. \end{IEEEproof}

Next we look at the tightness of the above bounds for an example of compound Poisson source process with exponential packet size. 
We expect the tightness to depend on the traffic load. 

\subsection{Numerical Examples} 

Here we compare the finite-horizon rate functions $I_2$ for three schedulers, namely, max-weight, GPS with equal weights, and a priority scheduler that gives higher priority to queue $1$. We also examine the tightness of the bounds given in Lemma~\ref{lem:MSLDPItBounds}. Both of these are for an average of \emph{i.i.d.} compound Poisson source processes with exponential packet size where the packet arrivals follow Poisson distribution of rate $\lambda$ and the average packet size is $1/\mu$ (see \cite{KittiHighSNR07}). 
The function $\Lambda^*$ for this process is given by $$ \Lambda^*(x) = \mu (\sqrt{x}-\sqrt{\lambda})^2,$$ for $x\in \R_+$. Note that this has domain $\R_+$ and therefore satisfies Proposition~\ref{prop1}. We once again make the simplifying assumption that the processes for the different users are statistically identical and that the rate-region is the unit simplex.

First we present results for the comparison of the different scheduling policies. Here we set $\lambda = 0.1$ or $0.3$ with the average packet size of $1/\mu = 100$. Fig.~\ref{fig:I2mwl1} and \ref{fig:I2mwl3} show the finite-horizon (two-timestep) rate function $I_2(\bv)$ for $\lambda = 0.1$ and $0.3$ respectively. However, these calculations are best appreciated when we compare them to the rate functions of other well-known scheduling policies. Fig.~\ref{fig:I2mwgps} compares the max-weight scheduler with a GPS scheduler with equal weights both at $\lambda=0.3$. One can see that the rate function for the max-W scheduler is greater than the rate-function of the GPS scheduler for some range of $\bv$, i.e., where $b^1$ is much greater that $ b^2$ and {\it vice versa}. This means that for this range of $\bv$ and in the many-sources asymptotic sense, the work-conserving MW scheduler performs better than the work-conserving GPS scheduler (when we only consider the two time-step workload). The reason is that with $b^1$ much greater than $b^2$, MW can serve queue~$1$ at full capacity $(1)$, where GPS has to serve both queues equally at $1/2$. Hence, the workload under MW is less likely to reach $(b^1,b^2)$ at the end of the two time-slots when the system starts from being empty. Similarly \ref{fig:I2mwp} compares the rate-functions for the max-weight scheduler and a priority scheduler where user $1$ has higher priority. Since the max-weight policy does not discriminate between the two users, we find that it is less likely for $b^2$ to large for the max-weight in comparison to the priority policy with the reverse being true for $b^1$.

\begin{figure}[tbp]
\centering
    \subfigure[$\lambda=0.1$]{
        \includegraphics[width=.7\linewidth]{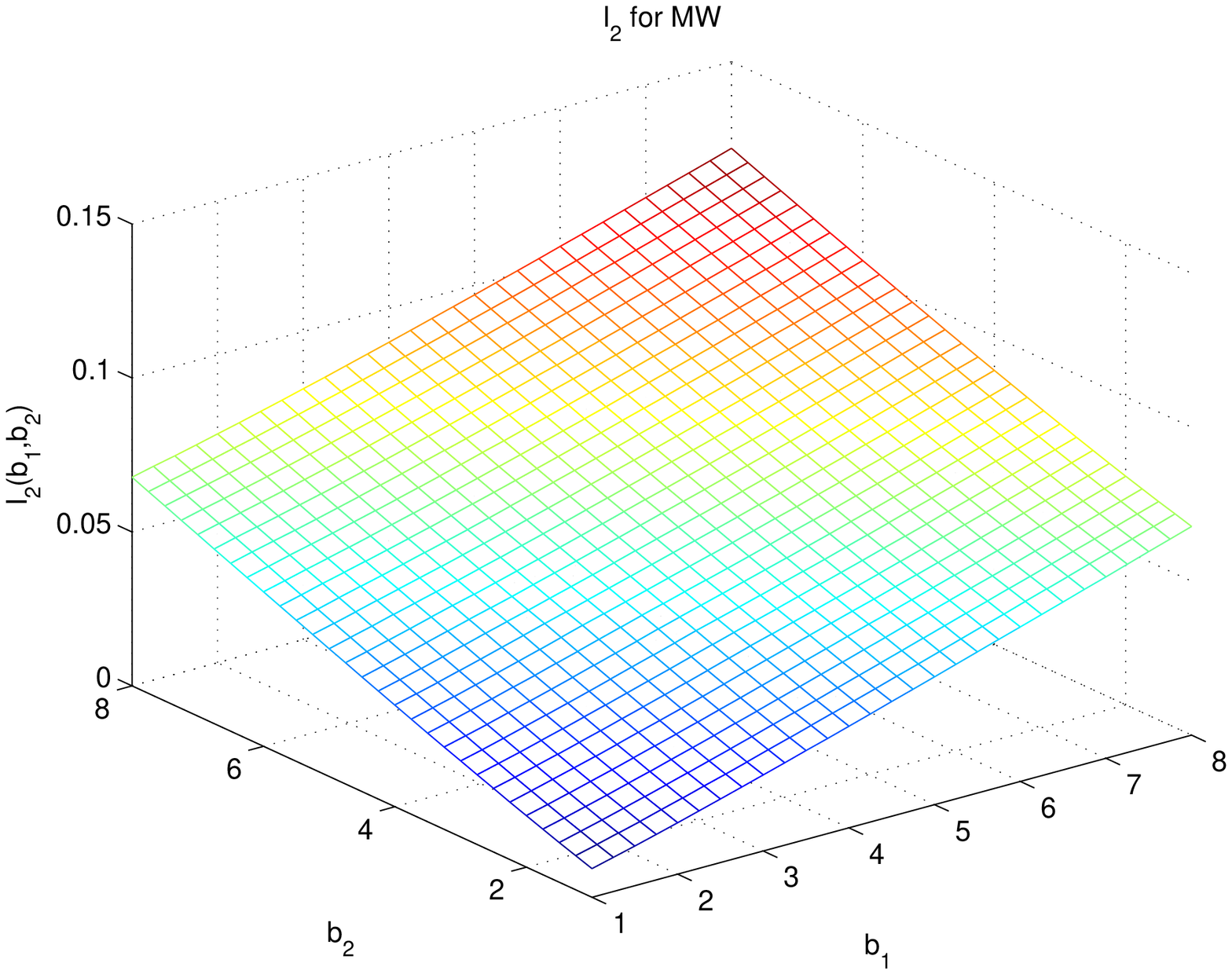} \label{fig:I2mwl1}}
        \hfill
    \subfigure[$\lambda=0.3$]{
        \includegraphics[width=.7\linewidth]{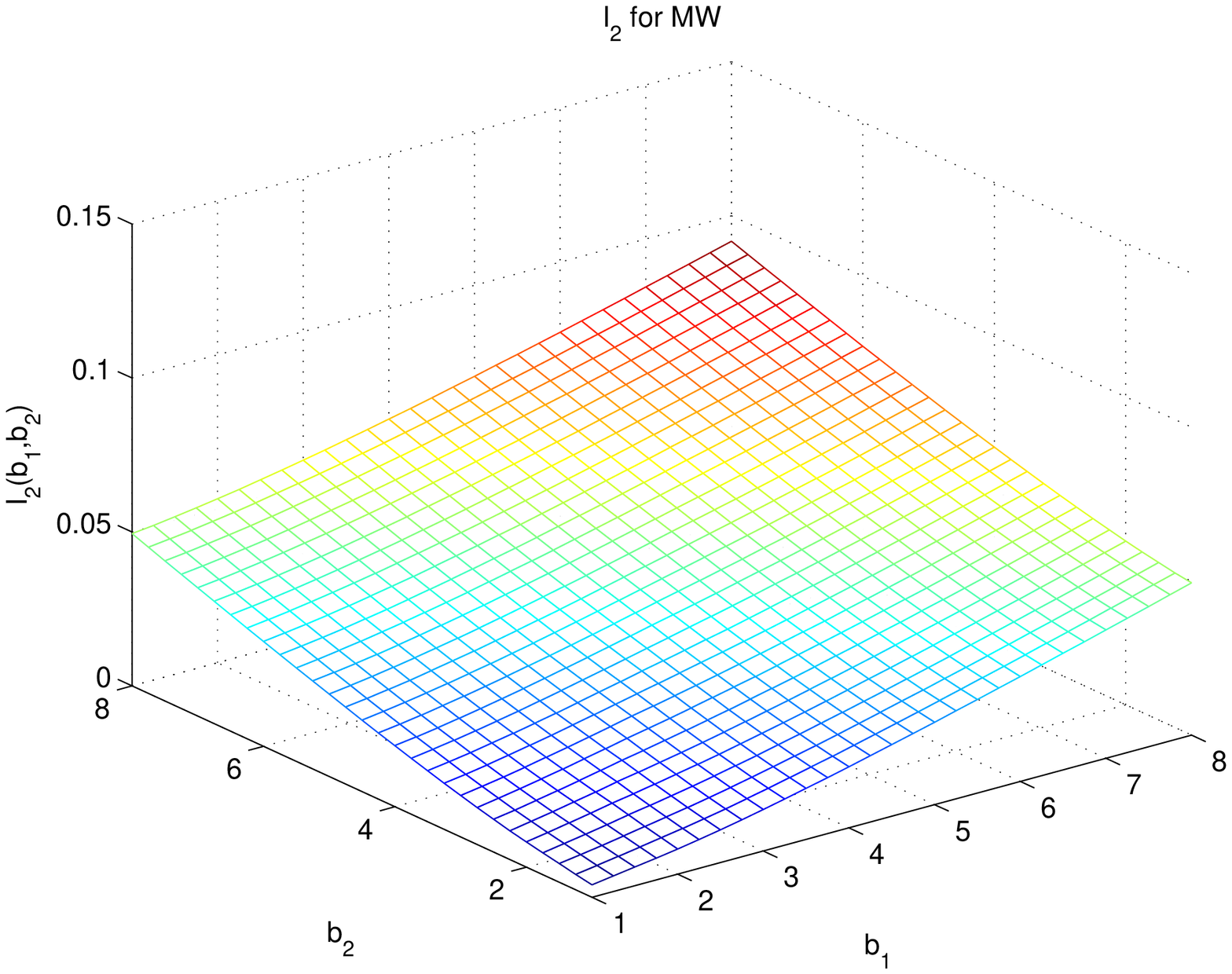} \label{fig:I2mwl3}}
    \hfill
\caption{Finite-horizon rate function $I_2(\bv)$ for the max-weight scheduler.} 
\label{fig:I2mw} \vspace{-0.1in}
\end{figure}

\begin{figure}[tbp]
\centering
    \subfigure[Max-weight versus GPS]{
        \includegraphics[width=.7\linewidth]{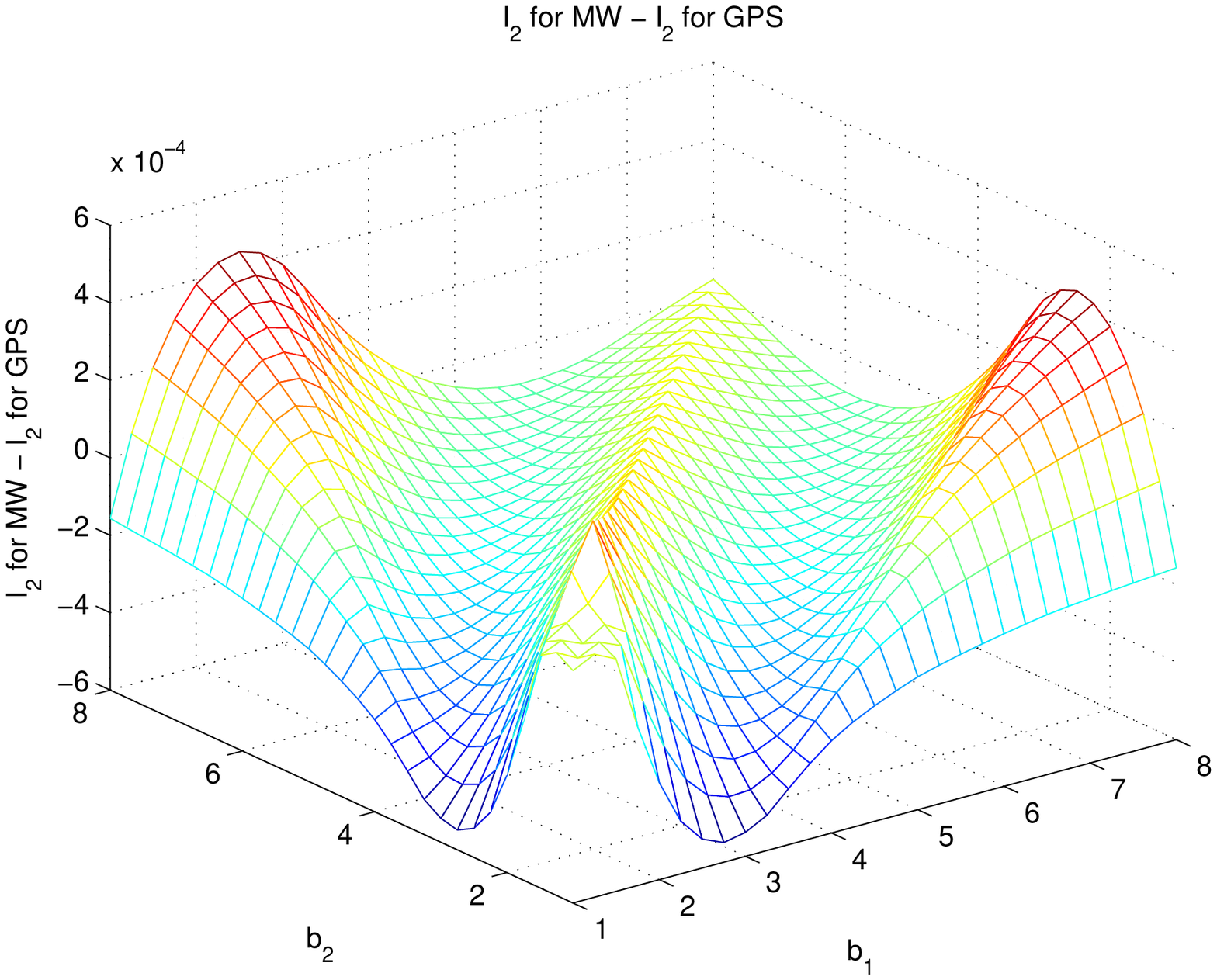} \label{fig:I2mwgps}}
        \hfill
    \subfigure[Max-weight versus Priority for user $1$]{
        \includegraphics[width=.7\linewidth]{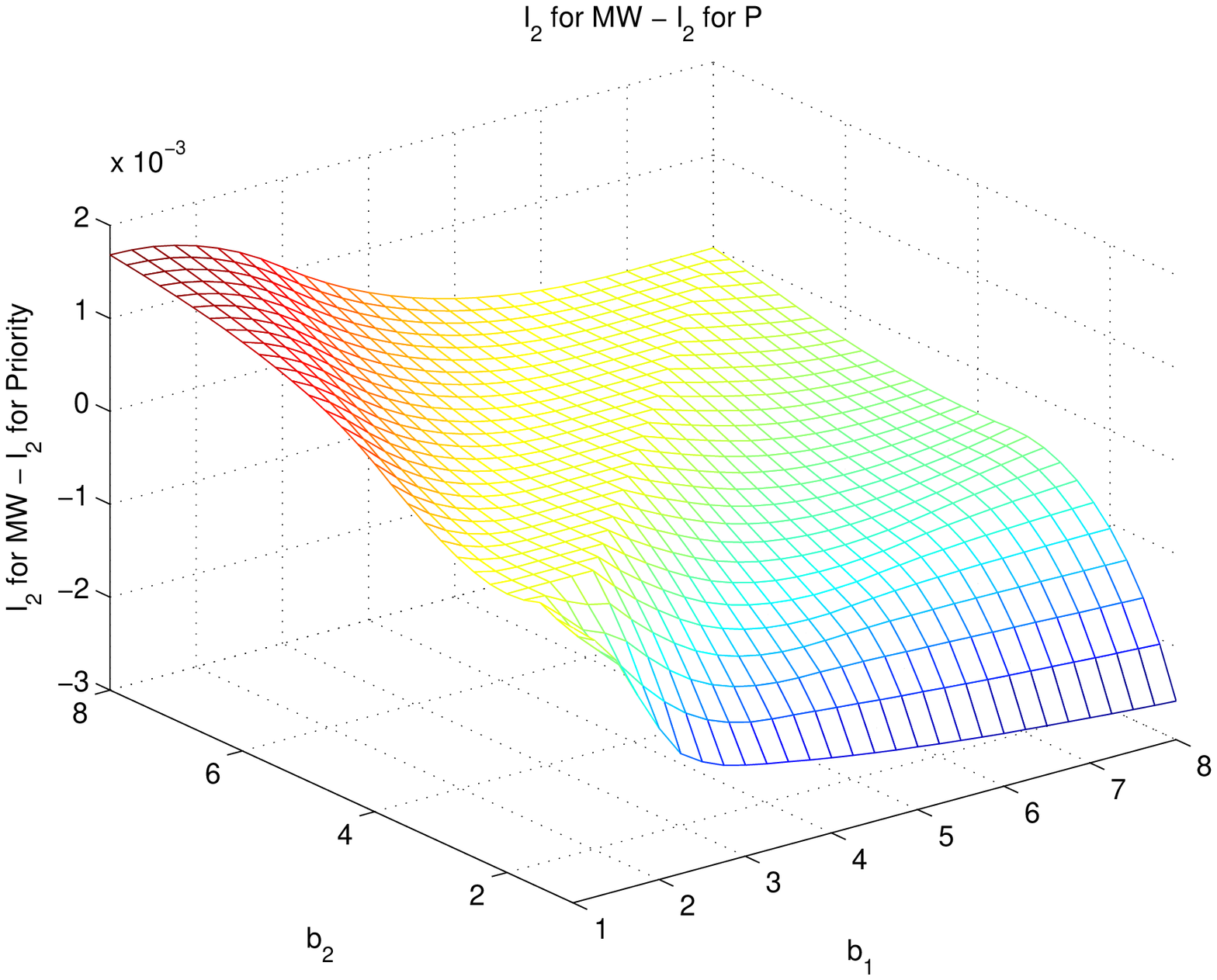} \label{fig:I2mwp}}
    \hfill
\caption{Comparison of the rate-functions of the max-weight scheduler, the GPS scheduler with equal weights and the priority scheduler where user $1$ has higher priority.} 
\label{fig:I2compare} \vspace{-0.1in}
\end{figure}

Next we compare our bounds for the rate function of the max-weight policy with the exact rate function in the scenarios where we can calculate it by by brute force. Figure \ref{fig:Bound} shows the upper and lower bounds and the actual values of $I_t,$ for $t=10$, at $\mu = 0.01$, and various values of $\lambda = 0.1,0.2,0.3$ and when $\bv = (b^1,b^2=1)$ for various values of $b^1$. Figure \ref{fig:BoundT} shows the corresponding minimizing $t^*$ for the bounds and the actual expression of $I_t$. We note that for all $\bv$ in this example, $J(\bv)$ is actually equal to $I_t(\bv)$ for $t=10$ since all optimizing $t^*$ is less than 10 (see \reqn{eq:revisedJ}). This example shows that, in the range of $\bv$ in consideration, both bounds are tight and almost coincide when the traffic load is small, i.e., $\lambda=0.1$. 
However, when the traffic load is higher, the lowerbound becomes loose while the upperbound is still considerably tight. 

It is interesting to note the optimal timescale $t^*$ which the queues most likely to take to reach the level $\bv$. Figure \ref{fig:BoundT} shows that, for example, it is most likely to take only two timeslots for CPE process with $\lambda = 0.2$ to reach the buffer level $\bv = (3,1)$, while the most likely timescale is four timeslots when the traffic load is higher ($\lambda = 0.3$). Figure \ref{fig:BoundTrajec2} and Figure \ref{fig:BoundTrajec} show the optimal trajectories of the accumulated arrival process and the workload process for $\lambda = 0.2$ and $0.3$, respectively.



\begin{figure}[tbp]
\centering
    \subfigure[Rate functions]{
        \includegraphics[width=.75\linewidth]{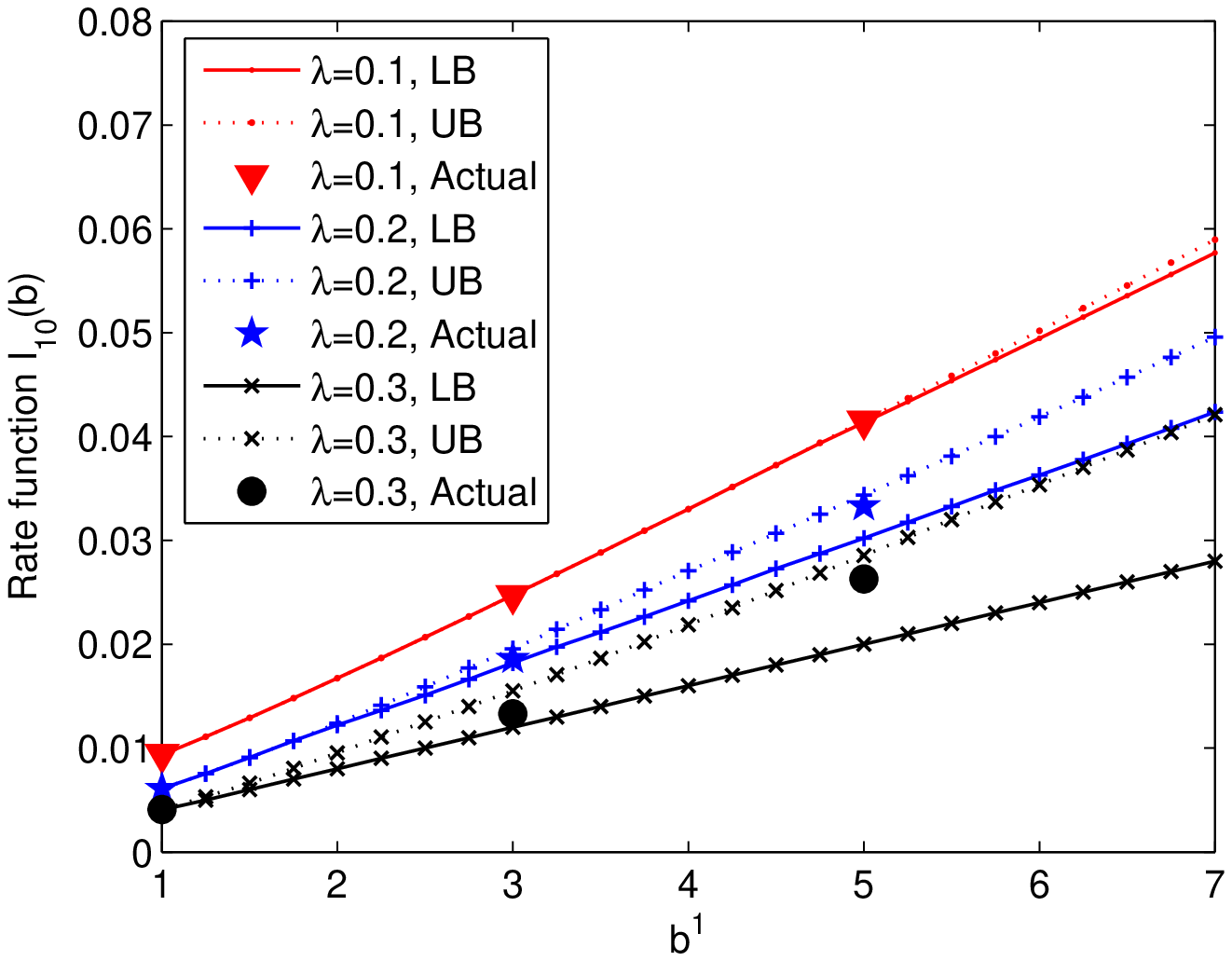} \label{fig:Bound}}
        \hfill
    \subfigure[Optimal Timescales]{
        \includegraphics[width=.75\linewidth]{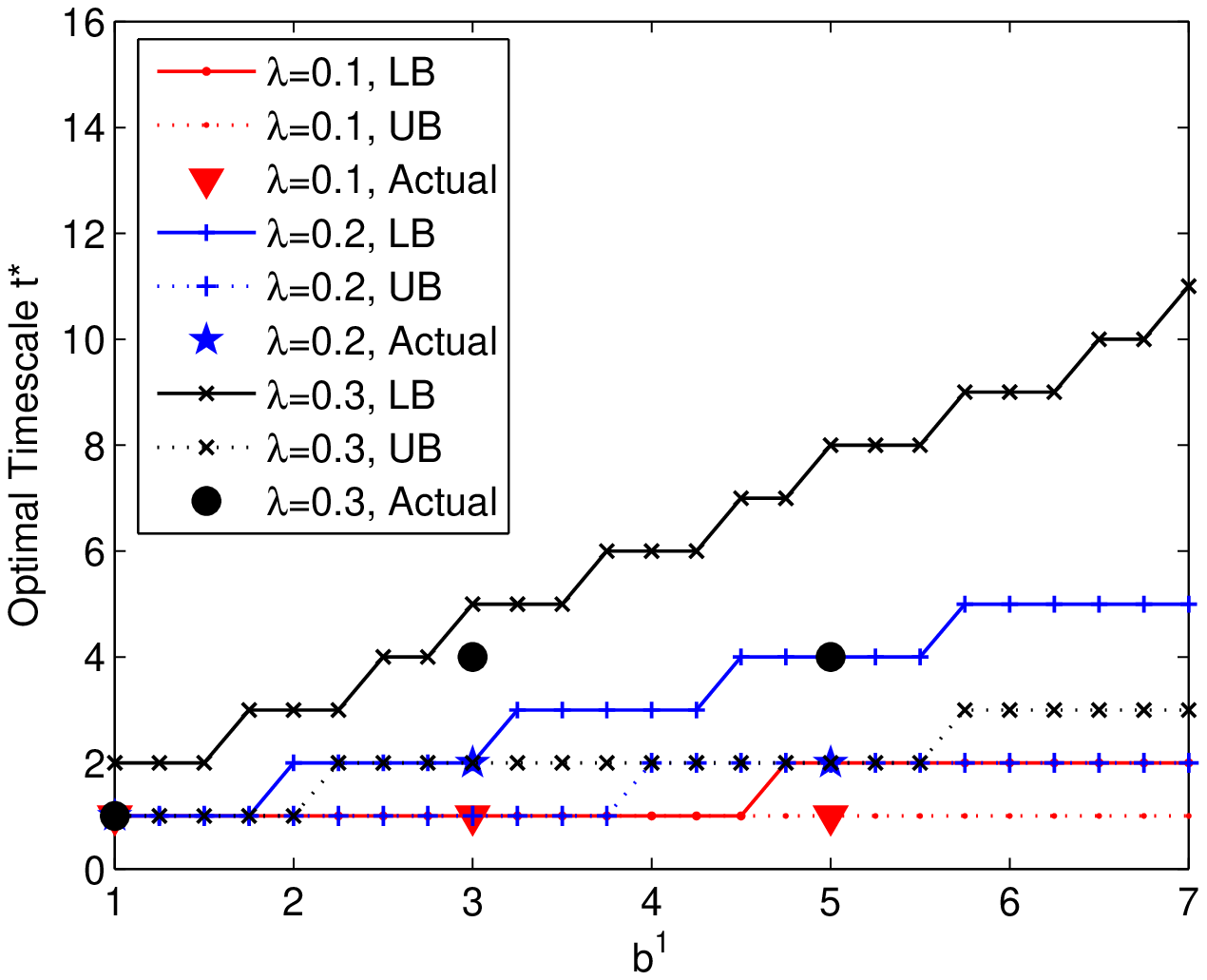} \label{fig:BoundT}}
    \hfill
\caption{Example of the rate function $I_{10}(\bv)$ and its upper and lower bounds, and their corresponding optimizing $t^*$ and optimal trajectories, when $\bv=(b^1,b^2=1)$.} 
\label{fig:BoundCompare} \vspace{-0.1in}
\end{figure}

\begin{figure}[tbp]
\addtocounter{subfigure}{2}  
\centering
    \subfigure[Optimal Trajectory for $\bv=(3,1)$ and $\lambda=0.2$]{
        \includegraphics[width=.75\linewidth]{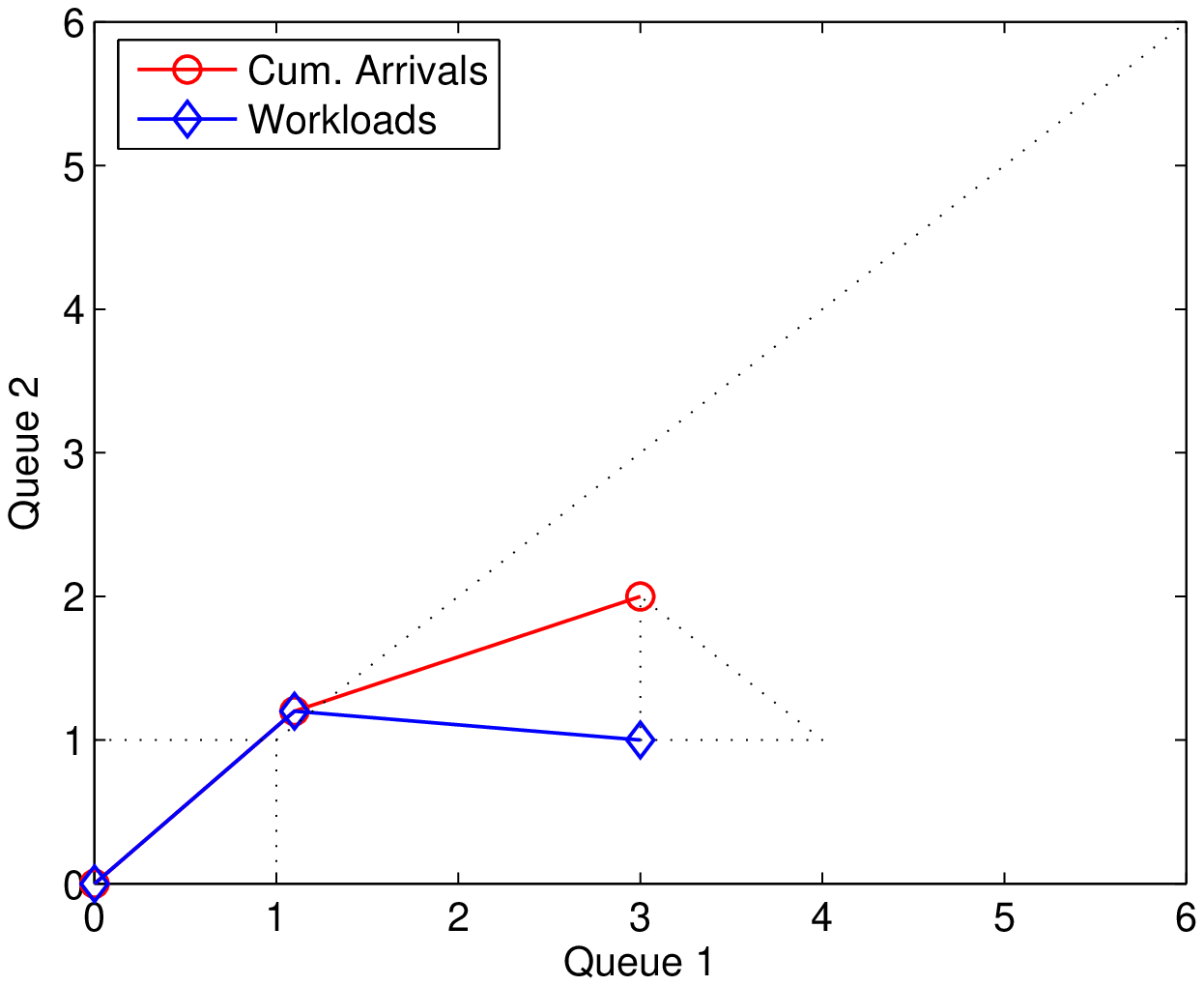} \label{fig:BoundTrajec2}}
    \hfill
    \subfigure[Optimal Trajectory for $\bv=(3,1)$ and $\lambda=0.3$]{
        \includegraphics[width=.75\linewidth]{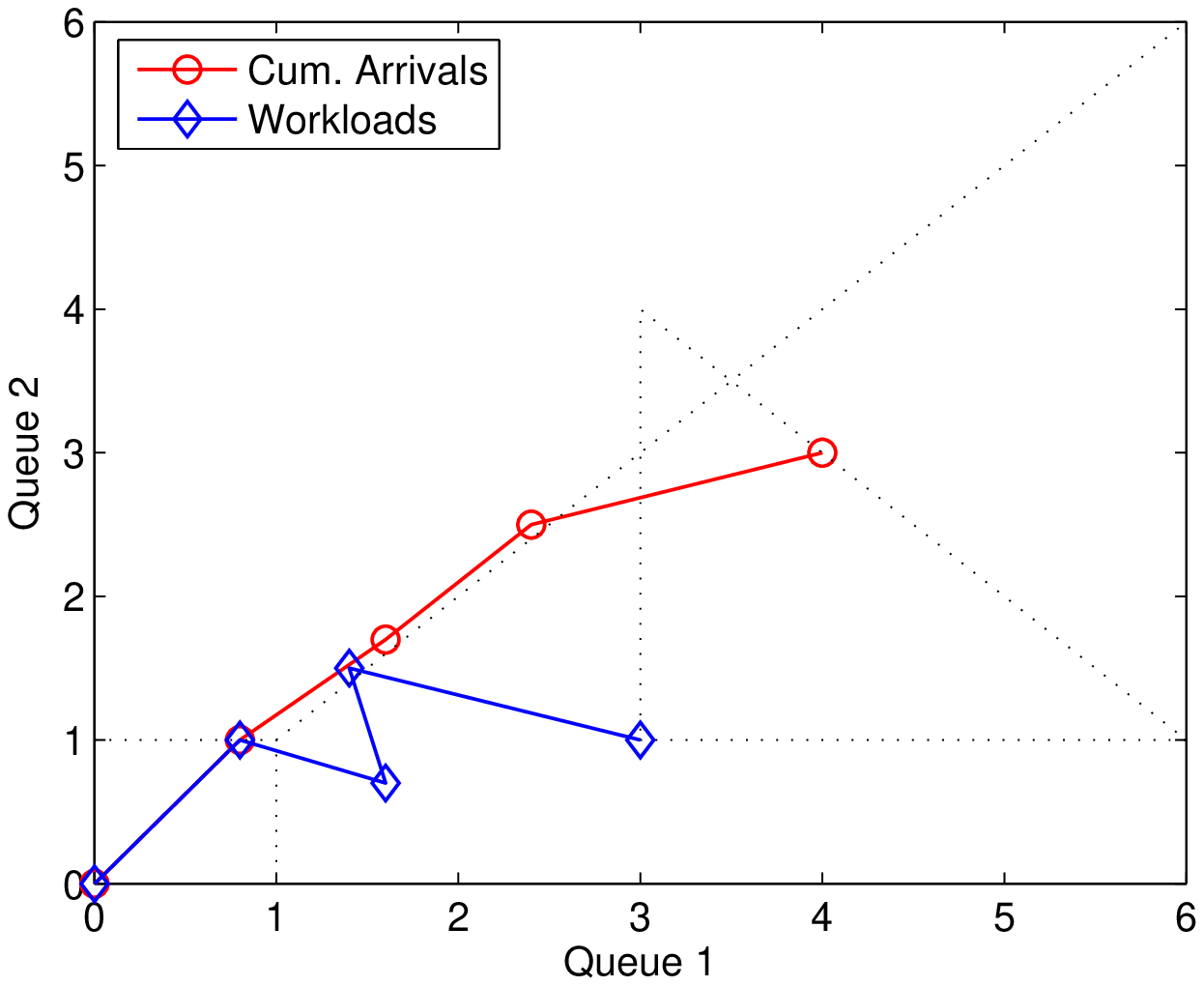} \label{fig:BoundTrajec}}
    \hfill
    \caption{The optimal accumulated arrival and workload trajectories when $\bv=(3,1)$ at $\lambda=0.2$ and $\lambda=0.3$,  respectively.} 
\end{figure} 

Note that despite a potential exponential growth in computation of $I_t$, such 
computations commonly reduce to simpler cases.  For instance, consider the calculation of $I_{10}$ in  Figs.~\ref{fig:BoundTrajec2} and~\ref{fig:BoundTrajec}, in which we sequentially calculate $I_1$, $I_2$, $I_3$. However, this sequential computation stops as soon as one reaches the optimal timescale $t^* < t$. For the values of the vector $b$ in Figs.~\ref{fig:BoundTrajec2} and~\ref{fig:BoundTrajec}, for instance, $t^*$ was at most 4, making the calculations of $I_6$-$I_{10}$ unnecessary.

\section{Concluding Remarks} \label{sect:Conclusion}

In this paper, we have established a many-sources LDP for the stationary (infinite-horizon) workload for multi-queue single-server system with simplex rate region and under maximum-weight scheduling, when the arrival processes assumed to satisfy certain many-sources sample path LDP. To extend the LDP of the arrival processes to the LDP of the workloads, we employed Garcia's extended contraction principle, which applies to quasi-continuous mappings. Along the way, we also establish an LDP for the finite-horizon workload
in a very general setting of arbitrary compact, convex, and coordinate convex
rate region under max-weight scheduling. We gave the associated rate functions and the expression of the infinite-horizon rate function in term of the finite-horizon ones, when the arrival processes have \emph{i.i.d.} increments. 

Next, we catalogue some interesting areas of future research. 
The extension of our LDP result for 
the infinite-horizon workload in  the 
case of an arbitrary compact, convex, and coordinate convex rate region remains 
open by and large. The main difficulty in establishing an LDP for the infinite-horizon workload is in showing the quasi-continuity of the infinite-horizon workload mapping. In the case of simplex rate
region, the infinite-horizon workload mapping was shown to be reducible to a finite-horizon mapping whose quasi-continuity was established via induction. 
Another question that has only been partially explored
in the current paper is the nature of Assumptions~\ref{Assumption-cont}-\ref{Assumption2}. 
In particular, beyond Proposition~1, 
the relationship between stochastic mixing properties of the arrival process
and the analytical properties of $\Ish_t$ and $\Ish$ remains an important area of
future research.



\appendices

\section{Proof of Lemma~\ref{lem:cont_H}}\label{sec:app1}

Lemma~\ref{lem:cont_H} states that it is possible to construct quasi-continuous functions 
$H$ and $H^\mathrm{wc}$ such that 
\beq
H(\Wv_t) \in  \arg\max_{\Rvec \in \RR} <\Rvec , \Wv_t>,  \nonumber
\eeq
and 
\beq 
H^{\mathrm{wc}}(\Wv_t) = \left\{ \begin{array}{ll}
 \text{Proj}_{\RR}(\Wv_t) &  \Wv_t \in \Pi_{k=1}^{K} [0,C_k) \\
H(\Wv_t) &  \mbox{otherwise} \end{array}. \right.  \nonumber
\eeq
We do this using selection theorems (see
\cite{matejdes1987} and \cite{matejdes1991}) for correspondences associated with 
max-weight scheduling:
\begin{eqnarray}\label{eq:maxmono}
{\mathcal{H}(\Wv_t}) & : = & \arg\max_{\Rvec \in \RR} <\Rvec , \Wv_t>.
\end{eqnarray}

First we define the following analytical properties of correspondences.
\begin{defn}[Local-boundedness]\label{defn:lb} A correspondence $F:\XX\rightrightarrows\Y$ is locally-bounded (\emph{lb}) \cite[Defn. 5.15, pp. 157--158]{RockafellarWets1998} at point $x\in\XX$ if there exists a neighbourhood $V$ of $x$ such that $F(V):=\cup_{a\in V}F(a)$ is bounded in $\Y$. Correspondence $F$ is deemed \emph{lb} if it is \emph{lb} for every point $x\in\XX$.
\end{defn}
\begin{defn}[Outer semicontinuity]\label{defn:osc} For a correspondence $F:\XX\rightrightarrows\Y$ define the outer-limit at $x\in\XX$ to be 
\begin{align*}
\limsup_{a\rightarrow x} F(a):&=\cup_{x^n\rightarrow x} \limsup_{n\rightarrow \infty} F(x^n) = \{ y | \exists x^n \rightarrow x,\exists y^n\rightarrow y \text{ with } y^n \in F(x^n)\}.
\end{align*}
Then $F$ is outer semicontinuous (\emph{osc}) \cite[Defn. 5.4, pg. 152]{RockafellarWets1998} at $x\in\XX$ if $\limsup_{a\rightarrow x} F(a) \subset F(x)$. Correspondence $F$ is deemed \emph{osc} if it is \emph{osc} for every point $x\in\XX$.
\end{defn}
\begin{defn}[Upper semicontinuity]\label{defn:usc} A correspondence $F:\XX\rightrightarrows\Y$ is upper semicontinuous (also known as upper hemicontinuous, \cite[Thm. 5.19, pg. 160]{RockafellarWets1998} and \cite{AubinCellina1984}) at point $x\in\XX$ if for any open neighbourhood $V$ of $F(x)$, there exists a neighbourhood $U$ of $x$ such that $F(a)\subseteq V$ for all $a\in U$. Alternatively, $F$ is said to be upper semicontinuous at $x\in\XX$ if whenever we have sequences $\{x_m\}\in \XX$ and $\{ y_m \}\in\Y$ such that $y_m\in F(x_m)$ for all $m$, $x_m \rightarrow x$ and $y_m\rightarrow y$, then $y\in F(x)$. Correspondence $F$ is deemed \emph{usc} if it is \emph{usc} for every point $x\in\XX$.
\end{defn}

Now we can prove Lemma~\ref{lem:cont_H}.
\begin{IEEEproof}[Proof of Lemma~\ref{lem:cont_H}] 
The scheduling function ${\mathcal{H}}(\cdot): \R_+^K \mapsto \mathcal{P}(\RR)$ is a maximal monotone correspondence~\cite[Thm. 12.17, pp. 542--543]{RockafellarWets1998} that picks closed and convex subsets of $\RR$ for every $\xv\in\R_+^K$; thus, compact subsets of $\R_+^K$. It is, therefore, both \emph{lb} and \emph{usc} from \cite[Ex. 12.8b, pg. 536]{RockafellarWets1998}. Now it follows that we get a quasi-continuous selection by \cite[Thm. 2]{matejdes1987} and \cite[Thms. 2.4 \& 2.5]{matejdes1991} since $H(\Wv_t(\cdot))$ is \emph{usc} and has compact values. 

To prove that $H^{\mathrm{wc}}$ is a quasi-continuous selection we first use the same steps as above but with a restricted domain, i.e., for ${\mathcal{H}}(\cdot):\R_+^K\setminus \prod_{k\in\K}[0,C_k) \mapsto \mathcal{P}(\RR)$ since $\R_+^K\setminus \prod_{k\in\K}[0,C_k)$ is a Baire space~\cite{Munkr00} so that results from \cite{matejdes1987,matejdes1991} still apply. From the definition of the work-conserving max-weight scheduler, if $\xv\in \prod_{k\in\K}[0,C_k)$, then we get continuity from within this set from the properties of $\mathrm{Proj}_{\RR}(\cdot)$. Thus, we can satisfy the definition of quasi-continuity from Defn.~\ref{defn:quasi-continuity_condition}.
\end{IEEEproof}
We refer the reader to \cite[Thm. 2.2]{CaoMoors2005} and \cite[Thm. 3.4]{CazacuLawson2007} for an exposition and for generalizations of the result from \cite{matejdes1987,matejdes1991}. 

\section{Proof of Lemmas~\ref{lem:continuity_Gt}-\ref{lem:quasic_mw} }\label{sec:app2}

Next we prove Lemmas~\ref{lem:continuity_Gt} 
which uses the following fact: 

\begin{fact} \label{fact:sum_semi} Assume 
$\XX,\Y$ are metric spaces and $F_1$ and $F_2$ are functions from $\XX$ onto $\Y$. If $F_1$ is quasi-continuous at $x\in \XX$ 
and $F_2$ is continuous at $x$, then $F_1+F_2$ is quasi-continuous at $x$. 
\end{fact}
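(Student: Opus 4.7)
The plan is to unpack Definition~\ref{defn:quasi-continuity_condition} directly and verify it for $F_1+F_2$ by reusing the approximating sequence that quasi-continuity of $F_1$ already supplies. Concretely, applying quasi-continuity of $F_1$ at $x$ yields a sequence $\{x_n\} \subset \XX$ with $x_n \to x$, $F_1(x_n) \to F_1(x)$, and $F_1$ continuous at each $x_n$. I would then claim that this same sequence certifies quasi-continuity of $F_1 + F_2$ at $x$, and verify the three clauses of Definition~\ref{defn:quasi-continuity_condition} for the sum one by one.

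Convergence of the values along the sequence is immediate: continuity of $F_2$ at $x$ combined with $x_n \to x$ gives $F_2(x_n) \to F_2(x)$; adding this to $F_1(x_n) \to F_1(x)$, and invoking continuity of addition on the metric (vector) space $\Y$ in which sums are formed, yields $(F_1+F_2)(x_n) \to (F_1+F_2)(x)$. This handles the first two clauses.

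For continuity of $F_1+F_2$ at each $x_n$, the plan is to combine: (i) continuity of $F_1$ at $x_n$, given directly by the choice of the sequence, with (ii) continuity of $F_2$ at $x_n$, which is supplied by the standing hypothesis of the Fact (in every intended application in the paper, $F_2$ is globally continuous, e.g.\ a projection or the linear map $\av \mapsto \av_{s+1}$ as used in the induction of Lemma~\ref{lem:continuity_Gt}). A standard $\varepsilon/2$ argument, together with continuity of addition on $\Y$, then shows that $F_1+F_2$ is continuous at $x_n$, finishing the verification.

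The only substantive point — and essentially the entire content of the Fact — is that one is \emph{forced} to reuse the sequence produced by the quasi-continuity of $F_1$: because quasi-continuity does not guarantee a dense set of continuity points for $F_1$, one cannot independently select a sequence along which $F_2$ behaves well and then hope it works for $F_1$. Once one commits to the $F_1$-sequence, everything else is routine metric-space bookkeeping, so I do not anticipate any genuine obstacle.
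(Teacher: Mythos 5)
The paper states Fact~\ref{fact:sum_semi} without any proof, so there is no argument of the authors' to compare against; your approach --- reuse the approximating sequence $\{x_n\}$ furnished by the quasi-continuity of $F_1$ and check the three clauses of Definition~\ref{defn:quasi-continuity_condition} for the sum --- is the natural and essentially the only one. The convergence clause is handled correctly: continuity of $F_2$ at $x$ gives $F_2(x_n)\to F_2(x)$, and continuity of addition on $\Y$ (which must tacitly be a normed or topological vector space for ``$F_1+F_2$'' to make sense) gives $(F_1+F_2)(x_n)\to(F_1+F_2)(x)$. Your closing remark that one is \emph{forced} to use the $F_1$-sequence, because quasi-continuity gives no dense set of continuity points, is also exactly right.

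The one genuine issue is in step (ii): you assert that continuity of $F_2$ at each $x_n$ ``is supplied by the standing hypothesis of the Fact.'' It is not --- the hypothesis only gives continuity of $F_2$ at the single point $x$, and the $x_n$ need not equal $x$. In fact the Fact as literally stated is false: take $\XX=\Y=\R$, let $F_1$ be the step function ($0$ for $t<0$, $1$ for $t\ge 0$), which is quasi-continuous at $0$, and let $F_2(t)=t\cdot\mathbf{1}_{\Q}(t)$, which is continuous at $0$ and nowhere else. Then $F_1+F_2$ has no points of continuity at all, so it cannot be quasi-continuous at $0$. Your parenthetical escape hatch --- that in every application in the paper ($\mathrm{Proj}$, coordinate extraction, the shift in Lemma~\ref{lem:continuity_Gt}) $F_2$ is globally continuous --- is the correct repair, but it should be promoted from an aside to the hypothesis itself: the Fact needs $F_2$ continuous on a neighbourhood of $x$ (or everywhere), not merely at $x$. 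With that strengthened hypothesis your $\varepsilon/2$ argument goes through and the proof is complete.
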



Below we state and prove Lemma \ref{lem:continuity_Gt} for a simplex rate region $\RR_s$ to 
avoid unnecessary notational complexities; the result readily extends to a general 
rate region as discussed in Remark~\ref{remark7}.

\emph{Lemma \ref{lem:continuity_Gt}:}
For $t\in \N$, $G_t^{\mathrm{wc}}$ for $\RR_s$ is quasi-continuous on $\R^{K t}_+$ with respect to the scaled uniform norm topology.
\begin{IEEEproof} 
Using our queueing equation we first observe the following recursive relation between $G_t^{\mathrm{wc}}$ and $G_{t-1}^{\mathrm{wc}}$ for any $t \in \{2,3,\ldots\}$ and $\xv \in \D^K_\muv$: 
\begin{align} \label{eq:relationGt}
G_{t}^{\mathrm{wc}}(\xv|_{(0,t]})  = \xv_1+  \bigl[G_{t-1}^{\mathrm{wc}}(\xv|_{(1,t]}) 
- H^{\mathrm{wc}}( G_{t-1}^{\mathrm{wc}}(\xv|_{(1,t]}))\bigr]^+  , 
\end{align}
where we used the fact that $\Wv_0^{\mathrm{wc}}(\xv|_{(0,t]}) = G_t^{\mathrm{wc}}(\xv|_{(0,t]})$, and $\Wv_1^{\mathrm{wc}}(\xv|_{(1,t]}) = G_{t-1}^{\mathrm{wc}}(\xv|_{(1,t]})$ when the initial backlog at time $-t$ is $\mathbf{0}$. 

Equation \reqn{eq:relationGt} says that $G_t^{\mathrm{wc}}(\xv|_{(0,t]})$ 
depends linearly on $\xv_1$. This implies the following simple but consequential observations:

\begin{obs} \label{Observation_1}
If $G_t^{\mathrm{wc}}$ is quasi-continuous at $\xv|_{(0,t]}$, then it is quasi-continuous at $\tilde \xv|_{(0,t]} := (\tilde{\xv}_1,\xv_2,\dotsc,\xv_t)$ for any $\tilde{\xv}_1 \in \R^K_+$, and if $G_t^{\mathrm{wc}}$ is continuous at $\xv|_{(0,t]}$, then it is also continuous at $\tilde \xv|_{(0,t]}$.
\end{obs}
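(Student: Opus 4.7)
The plan hinges on the decomposition forced by \reqn{eq:relationGt}. Define
\begin{equation*}
\Phi(\xv_2,\ldots,\xv_t) := \bigl[G_{t-1}^{\mathrm{wc}}(\xv_2,\ldots,\xv_t) - H^{\mathrm{wc}}(G_{t-1}^{\mathrm{wc}}(\xv_2,\ldots,\xv_t))\bigr]^+,
\end{equation*}
so that \reqn{eq:relationGt} reads $G_t^{\mathrm{wc}}(\xv|_{(0,t]}) = \xv_1 + \Phi(\xv_2,\ldots,\xv_t)$. In this form the dependence on $\xv_1$ is transparent: it enters only through the continuous (indeed linear) map $\xv|_{(0,t]}\mapsto \xv_1$. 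The observation is then morally a consequence of Fact~\ref{fact:sum_semi}, together with the fact that $\Phi$, as a function of $\xv|_{(0,t]}$, is insensitive to $\xv_1$.

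For the continuity half, I would first note that because $\xv|_{(0,t]}\mapsto \xv_1$ is continuous everywhere, $G_t^{\mathrm{wc}}$ is continuous at a point $\yv|_{(0,t]}$ if and only if $\Phi$ is continuous at $(\yv_2,\ldots,\yv_t)$. Hence continuity of $G_t^{\mathrm{wc}}$ at $\xv|_{(0,t]}$ immediately implies continuity of $G_t^{\mathrm{wc}}$ at $(\tilde{\xv}_1,\xv_2,\ldots,\xv_t)$ for every $\tilde{\xv}_1\in\R^K_+$, since the tail $(\xv_2,\ldots,\xv_t)$ is untouched.

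For the quasi-continuity half, let $\{\xv^n|_{(0,t]}\}$ be a sequence converging to $\xv|_{(0,t]}$ witnessing quasi-continuity of $G_t^{\mathrm{wc}}$ at $\xv|_{(0,t]}$, so that $G_t^{\mathrm{wc}}(\xv^n|_{(0,t]})\to G_t^{\mathrm{wc}}(\xv|_{(0,t]})$ and $G_t^{\mathrm{wc}}$ is continuous at each $\xv^n|_{(0,t]}$. Build a test sequence for $\tilde{\xv}|_{(0,t]} = (\tilde{\xv}_1,\xv_2,\ldots,\xv_t)$ by shifting only the first coordinate, namely
\begin{equation*}
\tilde{\xv}^n := (\tilde{\xv}_1 + \xv^n_1 - \xv_1,\ \xv^n_2,\ldots,\xv^n_t).
\end{equation*}
Then $\tilde{\xv}^n\to\tilde{\xv}|_{(0,t]}$, and from the decomposition $G_t^{\mathrm{wc}}(\tilde{\xv}^n)=G_t^{\mathrm{wc}}(\xv^n|_{(0,t]})+(\tilde{\xv}_1-\xv_1)\to G_t^{\mathrm{wc}}(\tilde{\xv}|_{(0,t]})$. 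By the equivalence noted above, continuity of $G_t^{\mathrm{wc}}$ at $\xv^n|_{(0,t]}$ is continuity of $\Phi$ at $(\xv^n_2,\ldots,\xv^n_t)$, which in turn yields continuity of $G_t^{\mathrm{wc}}$ at $\tilde{\xv}^n$ (its tail is the same). All three requirements of Definition~\ref{defn:quasi-continuity_condition} are met.

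There is no real obstacle: the only moving part is to make sure one witnesses continuity at each $\tilde{\xv}^n$, and this is handled by reducing continuity of $G_t^{\mathrm{wc}}$ to continuity of the tail functional $\Phi$. The content of the observation is essentially that additive dependence on $\xv_1$ lets us translate continuity and quasi-continuity freely along the $\xv_1$-axis.
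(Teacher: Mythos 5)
Your proof is correct and is exactly the argument the paper intends: Observation~\ref{Observation_1} is stated there without a separate proof, as an immediate consequence of the additive (linear) dependence of $G_t^{\mathrm{wc}}$ on $\xv_1$ in \reqn{eq:relationGt}, which is precisely your decomposition $G_t^{\mathrm{wc}}(\xv|_{(0,t]})=\xv_1+\Phi(\xv_2,\dotsc,\xv_t)$. The only blemish is that the first coordinate $\tilde{\xv}_1+\xv^n_1-\xv_1$ of your test sequence can leave $\R^K_+$ when $\tilde{\xv}_1$ has a zero component; it is simpler and safe to take $\tilde{\xv}^n:=(\tilde{\xv}_1,\xv^n_2,\dotsc,\xv^n_t)$, for which $G_t^{\mathrm{wc}}(\tilde{\xv}^n)=\tilde{\xv}_1+\Phi(\xv^n_2,\dotsc,\xv^n_t)\rightarrow G_t^{\mathrm{wc}}(\tilde{\xv}|_{(0,t]})$ and continuity at $\tilde{\xv}^n$ follows from your equivalence between continuity of $G_t^{\mathrm{wc}}$ and continuity of $\Phi$ at the tail.
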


\begin{obs}\label{Observation_2}
If $G_t^{\mathrm{wc},\RR_s}(\xv^n|_{(0,t]}) \rightarrow G_t^{\mathrm{wc}}(\xv|_{(0,t]})$ for a sequence $\{\xv^n|_{(0,t]}\}$ such that $\xv^n|_{(0,t]} \rightarrow \xv|_{(0,t]}$, then for any sequence 
$\{\tilde{\xv}^n|_{(0,t]}= (\tilde{\xv}^n_1,\xv^n_2,\dotsc,\xv^n_t)\}$ where
$\tilde{\xv}^n_1 \rightarrow \xv_1$, we also have
$G_t^{\mathrm{wc}}(\tilde{\xv}^n|_{(0,t]}) \rightarrow G_t^{\mathrm{wc}}(\xv|_{(0,t]})$.
\end{obs}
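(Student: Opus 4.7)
The plan is to exploit the recursive relation~\reqn{eq:relationGt}, which expresses $G_t^{\mathrm{wc}}(\xv|_{(0,t]})$ as a sum of two terms: the first arrival $\xv_1$, and a ``residue'' term $[G_{t-1}^{\mathrm{wc}}(\xv|_{(1,t]}) - H^{\mathrm{wc}}(G_{t-1}^{\mathrm{wc}}(\xv|_{(1,t]}))]^+$ that carries over from time $-1$ into time $0$. The crucial structural observation is that this residue depends only on the coordinates $\xv_2,\dotsc,\xv_t$ and is entirely insensitive to $\xv_1$.

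First I would apply this decomposition to the given convergent sequence $\{\xv^n|_{(0,t]}\}$ in order to isolate the behaviour of the residue. Since $\xv^n|_{(0,t]} \to \xv|_{(0,t]}$ in the product topology, we have $\xv^n_1 \to \xv_1$, and by hypothesis $G_t^{\mathrm{wc}}(\xv^n|_{(0,t]}) \to G_t^{\mathrm{wc}}(\xv|_{(0,t]})$. Subtracting $\xv^n_1$ from both sides of~\reqn{eq:relationGt} evaluated along the sequence, and using uniqueness of limits in $\R^K$, yields
\[
\bigl[G_{t-1}^{\mathrm{wc}}(\xv^n|_{(1,t]}) - H^{\mathrm{wc}}(G_{t-1}^{\mathrm{wc}}(\xv^n|_{(1,t]}))\bigr]^+ \to \bigl[G_{t-1}^{\mathrm{wc}}(\xv|_{(1,t]}) - H^{\mathrm{wc}}(G_{t-1}^{\mathrm{wc}}(\xv|_{(1,t]}))\bigr]^+.
\]

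Next I would write out the same decomposition for $\tilde{\xv}^n|_{(0,t]} = (\tilde{\xv}^n_1,\xv^n_2,\dotsc,\xv^n_t)$. Because this sequence differs from $\xv^n|_{(0,t]}$ only in the first coordinate, its tail $\tilde{\xv}^n|_{(1,t]}$ coincides identically with $\xv^n|_{(1,t]}$, so the residue term evaluated along $\tilde{\xv}^n|_{(0,t]}$ is literally the same object as the residue evaluated along $\xv^n|_{(0,t]}$. Hence
\[
G_t^{\mathrm{wc}}(\tilde{\xv}^n|_{(0,t]}) = \tilde{\xv}^n_1 + \bigl[G_{t-1}^{\mathrm{wc}}(\xv^n|_{(1,t]}) - H^{\mathrm{wc}}(G_{t-1}^{\mathrm{wc}}(\xv^n|_{(1,t]}))\bigr]^+,
\]
and combining $\tilde{\xv}^n_1 \to \xv_1$ with the limit extracted in the previous step gives $G_t^{\mathrm{wc}}(\tilde{\xv}^n|_{(0,t]}) \to G_t^{\mathrm{wc}}(\xv|_{(0,t]})$, as required.

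I do not anticipate a genuine obstacle here: the full content of the statement is that the residue term in~\reqn{eq:relationGt} is blind to the value of the first coordinate, so replacing $\xv^n_1$ by any other sequence $\tilde{\xv}^n_1$ converging to the same limit $\xv_1$ leaves the limit of $G_t^{\mathrm{wc}}$ unchanged. This decoupling of the first arrival from the recursive workload dynamics is precisely what makes Observation~\ref{Observation_2} a useful companion to Observation~\ref{Observation_1} in the subsequent quasi-continuity argument, since it grants complete freedom to perturb $\xv_1$ when constructing the approximating sequences required by Definition~\ref{defn:quasi-continuity_condition}.
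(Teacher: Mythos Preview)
Your proposal is correct and follows exactly the approach the paper intends: the paper simply remarks that \reqn{eq:relationGt} shows $G_t^{\mathrm{wc}}(\xv|_{(0,t]})$ depends linearly on $\xv_1$, and your argument is the explicit unpacking of that remark---isolating the residue term, noting it is unchanged when only the first coordinate is replaced, and recombining with the new first-coordinate sequence.
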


Using the recursive relation in \reqn{eq:relationGt}, we prove this lemma by induction on $t \in \N$. For every $t\in\N$ we assume that the system is empty at $-t$. Therefore, the arrival sample-path prior to $-t$ has no influence on $\Wv_{0,t}^{\mathrm{wc}}$. Thus, we will only specify the values of the constructed sequences up until time $t$; the extension to sample-paths in $\D^K_\muv$ while ensuring that the system is empty at $-t$ is trivial. For $t=1$, $G_1^{\mathrm{wc}}(\av_1) = \av_1$, hence $G_1^{\mathrm{wc}}$ is continuous on $\R^K_+$. Assuming that $G_t^{\mathrm{wc}}$ is quasi-continuous on $\R^{Kt}_+$, we want to show that $G_{t+1}^{\mathrm{wc}}$ is quasi-continuous on $\R^{K(t+1)}_+$. Using the fact that the $[\cdot]^+$ function is continuous, Remark~\ref{Rem:CompositionQuasi}, and  Fact~\ref{fact:sum_semi}, it suffices to show that the function $F_t :=G_t^{\mathrm{wc}}-H^{\mathrm{wc}}\circ G_t^{\mathrm{wc}}$, is quasi-continuous on $\R^{Kt}_+$ to show that $G_{t+1}^{\mathrm{wc}}$ is quasi-continuous. 
In particular, for any arrival sample path $\av  \in \D^K_\muv$, we need to show that $F_t$ is quasi-continuous at $\av|_{(0,t]}$. It suffices to show that it is possible to select a sequence $\hav^n \rightarrow \av$ (in $\D^K_\muv$) for which 
\begin{align}
G_t^{\mathrm{wc}}(\hav^n|_{(0,t]}) & \rightarrow G_t^{\mathrm{wc}}(\av|_{(0,t]}), \label{g} \\
H^{\mathrm{wc}} \circ G_t^{\mathrm{wc}}(\hav^n|_{(0,t]}) & \rightarrow H^{\mathrm{wc}} \circ G_t^{\mathrm{wc}}(\av|_{(0,t]}), \label{hg}
\end{align}
such that both $G_t^{\mathrm{wc}}(\cdot)$ and $H^{\mathrm{wc}}\circ G_t^{\mathrm{wc}}(\cdot)$ are continuous at every $\hav^n|_{(0,t]}$. Note that in contrast to Fact~\ref{fact:sum_semi} we are adding two quasi-continuous and showing that the sum is quasi-continuous; the key to our proof  is to ensure that we use the same sequence for both functions.

We show this by first noting that the induction hypothesis, i.e., quasi-continuity of $G_t^{\mathrm{wc}}$, and the definition of quasi-continuity ensure that there exists a sequence $\{\av^n\}$ such that $\av^n \rightarrow \av$, in the scaled uniform norm topology, such that $G_t^{\mathrm{wc}}(\av^n|_{(0,t]}) \rightarrow G_t^{\mathrm{wc}}(\av|_{(0,t]})$, and $G_t^{\mathrm{wc}}(\cdot)$ is continuous at $\av^n|_{(0,t]}$ for all $n$. We will construct the desired sequence $\{\hav^n\}$ by modifying  $\hat{a}_1^n$ appropriately. 
We proceed by considering the following two cases, depending on the value of $\av_1$.

\textbf{Case 1:} $\av_1>\mathbf{0}$, i.e., every component of the $\av_1\in\R_+^K$ is positive. Let $\epsilon>0$ be the smallest component of $\av_1$, i.e., $\epsilon=\min_{k\in\K} a^k_1$. 
Since $H(\cdot)$ is quasi-continuous, it is possible to choose a 
sequence of (workload) vectors $\{\wv^n\}$ such that
$\wv^n \rightarrow G_t^{\mathrm{wc}}(\av|_{(0,t]}),$ 
and $H$ is continuous at $\wv^n$ for all $n$. 
Now, 
we define 
\begin{align}\label{eq:MQLDP_an}
\begin{split}
\tav^n_1 & := \wv^n - [F_{t-1}(\av^n|_{(1,t]})]^+  = \wv^n - G_t^{\mathrm{wc}}(\av^n|_{(0,t]}) +\av^n_1 \\
& = \left(\wv^n-G_t^{\mathrm{wc}}(\av|_{(0,t]})\right)  +\left(G_t^{\mathrm{wc}}(\av|_{(0,t]})-G_t^{\mathrm{wc}}(\av^n|_{(0,t]})\right) +\left(\av^n_1-\av_1\right)+\av_1. 
\end{split}
\end{align}
It is clear from the last relationship that $\tav_1^n \rightarrow \av_1$. 
We still need to ensure that $\tav^n_1\geq 0$ since negative quantities are involved in the definition. 
We do this by using the facts that  every component of $\av_1\in\R^K$ is greater than or equal to $\epsilon>0$, and that  $\wv^n\rightarrow G_t^{\mathrm{wc}}(\av|_{(0,t]})$, $G_t^{\mathrm{wc}}(\av^n|_{(0,t]})\rightarrow G_t(\av|_{(0,t]})$, and $\av^n_1\rightarrow\av_1$. These facts imply that there exists an $n_{\epsilon}$ such that for all $n> n_{\epsilon}$ we have $\norm{\wv^n-G_t^{\mathrm{wc}}(\av|_{(0,t]})}< \epsilon/3$, $\norm{G_t^{\mathrm{wc},\RR_s}(\av|_{(0,t]})-G_t^{\mathrm{wc}}(\av^n|_{(0,t]})}< \epsilon/3$ and $\norm{\av^n_1-\av_1} < \epsilon/3$ (with the square norm) which then together with \reqn{eq:MQLDP_an} imply that, for the sequence $\tav^{m+n_{\epsilon}}_1$, we always have non-negativity of all components. Hence, we construct a new sequence $\{\hav^{n}_{(0,t]}\}$ where $\hav^{n}_1 = \tav^{n+n_{\epsilon}}_1$ and  $\hav^{n}_{(1,t]} = \av^{n+n_{\epsilon}}_{(1,t]}$.

This new sequence $\hav^{n}_{(0,t]}$ is the sequence we are after because using the induction hypothesis together with Observations~\ref{Observation_1}~and~\ref{Observation_2}, we have that
$G_t^{\mathrm{wc}}(\hav^n|_{(0,t]}) \rightarrow G_t^{\mathrm{wc}}(\av|_{(0,t]}),$ and $G_t^{\mathrm{wc}}$ is continuous at $\hav^n|_{(0,t]}$ for all $n$. Furthermore, by 
construction
\begin{align}
G_t^{\mathrm{wc}}(\hav^n|_{(0,t]}) &= \hav^n_1 + [F_{t-1}(\hav^n|_{(1,t]})]^+  =  \tav^{n+n_{\epsilon}}_1 + [F_{t-1}(\av^{n+n_{\epsilon}}|_{(1,t]})]^+=  \wv^{n+n_{\epsilon}}.
\end{align}
Hence, we have shown that there exists a sequence $\hav^n_{(0,t]}$ satisfying
\reqn{g} and \reqn{hg}. In addition, the continuity of $H^{\mathrm{wc}}\circ G_t^{\mathrm{wc}}$ at $\hav^n|_{(0,t]}$ 
for all $n$ is a direct consequence of continuity of $G_t^{\mathrm{wc}}$ at 
$\hav^n|_{(0,t]}$ and continuity of $H^{\mathrm{wc}}$ at $\wv^{n+n_{\epsilon}}$, which is equal to $G_t^{\mathrm{wc}}(\hav^n|_{(0,t]})$, for all $n$. 

\textbf{Case 2:} $\av_1 \geq \mathbf{0}$. Let $\K_1:=\{k\in \K: a_1^k=0\}$ and let $\K_2:=\arg\max_{k\in \K}  \Wv_0^k(\av|_{(0,t]}) C_k$. Without loss of generality, by permuting the user labels we can assume that the first $\hat{K}:=|\K_1\cup\K_2|$ components of $\av_1$ are either in $\K_1$ or in $\K_2$; thus, the rest of the $K-\hat{K}$ components are both positive and not part of the scheduling decision made at time $0$ with arrival sequence $\av_{(0,t]}$. Now consider the sequence $\av^m_1:=[1/m C]_{\hat{K}} + \av_1$ where $[1/m C]_{\hat{K}}$ is short-hand for a vector with $1/(m C_k)$ in the first $\hat{K}$ components and $0$ in the remaining coefficients; by construction $\av_1^m$ converges to $\av_1$ such that for every $m$ every component of $\av^m_1$ is positive. We construct a sequence $\{\av^m|_{(0,t]}\}$ with this $\av^m_1$ and $\av^m|_{(1,t]} = \av|_{(1,t]}$. It is obvious that $G_t^{\mathrm{wc}}(\av^m|_{(0,t]})\rightarrow G_t^{\mathrm{wc}}(\av|_{(0,t]})$ since 
\begin{align*} 
G_t^{\mathrm{wc}}(\av^m|_{(0,t]}) &= \av^m_1 + [F_{t-1}(\av^m|_{(1,t]})]^+= \av^m_1 + [F_{t-1}(\av|_{(1,t]})]^+  = [1/mC]_{\hat{K}} + G_t^{\mathrm{wc}}(\av|_{(0,t]}).
\end{align*}
When $G_t^{\mathrm{wc},\RR_s}(\av|_{(0,t]}) \not\in [0,C)^K$, by construction, we have 
\begin{align*} 
H^{\mathrm{wc}}\circ G_t^{\mathrm{wc}}(\av^m|_{(0,t]}) &= H^{\mathrm{wc}}\left(G_t^{\mathrm{wc}}(\av^m|_{(0,t]})\right)=H^{\mathrm{wc}}\left(G_t^{\mathrm{wc}}(\av|_{(0,t]})\right)=H\circ G_t^{\mathrm{wc}}(\av|_{(0,t]}),
\end{align*} 
where the function $H(\cdot)$ is the regular max-weight scheduling function (with $\RR_s$). 
On the other hand, if $G_t^{\mathrm{wc}}(\av|_{(0,t]})\in [0,C)^K$, then the continuity of $\text{Proj}_{\RR_s}(\cdot)$ yields $H^{\mathrm{wc}}\circ G_t^{\mathrm{wc}}(\av^m|_{(0,t]})\rightarrow H^{\mathrm{wc}}\circ G_t^{\mathrm{wc}}(\av|_{(0,t]})$. 

Since for each $m$ we have that $\av^m_1$ has all elements strictly positive, we can use the construction from {\bf Case 1} but with $\av^m|_{(0,t]}$ in place of $\av|_{(0,t]}$. 
In particular, for each $m$,
we can now generate a sequence $\{\tav^{m,n}\}$ such that $\tav|_1^{m,n}\rightarrow \av_1^m$ as $n\rightarrow+\infty$, $\tav^{m,n}|_{(1,t]}=\av^n|_{(1,t]}$, and by using Observations~\ref{Observation_1}~and~\ref{Observation_2}, the following hold
\begin{align}
 G_t^{\mathrm{wc}}(\tav^{m,n}|_{(0,t]})  & \rightarrow G_t^{\mathrm{wc}}(\av^m|_{(0,t]}), \label{gm} \\
 H^{\mathrm{wc}} \circ G_t^{\mathrm{wc}}(\tav^{m,n}|_{(0,t]})  & \rightarrow  H^{\mathrm{wc}} \circ G_t^{\mathrm{wc}}(\av^m|_{(0,t]}), 
\label{hgm}
\end{align}
with both $G_t^{\mathrm{wc}}(\cdot)$ and $H^{\mathrm{wc}}\circ G_t^{\mathrm{wc}}(\cdot)$ being continuous at $\tav^{m,n}|_{(0,t]}$ for all $m,\ n$. 

Now we define the sequence $\hav^m=\tav^{m,m}$ as the sequence we are after. By construction, we have $\hav^m|_{(0,t]}\rightarrow \av|_{(0,t]}$ and  both $G_t^{\mathrm{wc}}(\cdot)$ and $H^{\mathrm{wc}} \circ G_t^{\mathrm{wc}}(\cdot)$ continuous at all $\hav^m|_{(0,t]}$. Since $G_t^{\mathrm{wc}}(\av^m|_{(0,t]})\rightarrow G_t^{\mathrm{wc}}(\av|_{(0,t]})$ and $H^{\mathrm{wc}}\circ G_t^{\mathrm{wc}}(\av^m|_{(0,t]})\rightarrow H^{\mathrm{wc}}\circ G_t^{\mathrm{wc}}(\av|_{(0,t]})$, it follows from (\ref{gm}) and (\ref{hgm}) that $G_t^{\mathrm{wc}}(\hav^m|_{(0,t]})\rightarrow G_t^{\mathrm{wc}}(\av|_{(0,t]})$ and $H^{\mathrm{wc},\RR_s}\circ G_t^{\mathrm{wc}}(\hav^m|_{(0,t]})\rightarrow H^{\mathrm{wc}}\circ G_t^{\mathrm{wc}}(\av|_{(0,t]})$. 
\end{IEEEproof}
\begin{note} \label{remark7}
The proof above can be carried out for every rate-region in the class that we are interested in. The argument presented in \textbf{Case 1} would remain exactly the same but the argument presented in \textbf{Case 2} would have to be modified to account for a further characterization of $\arg\max_{\Rvec\in\RR} <\Wv_0(\av|_{(0,t]}), \Rvec>$, especially when it is not a singleton. The components of $\av_1$ will need to be adjusted in such a manner so as to not perturb $\arg\max_{\Rvec\in\RR} <\Wv_0(\av^m|_{(0,t]}), \Rvec>$ for the adjusted sequence $\av^m|_{(0,t]}$. As the case of a non-singleton $\arg\max_{\Rvec\in\RR} <\Wv_0(\av^m|_{(0,t]}), \Rvec>$ will correspond to a specific set of values of $\Wv_0(\av|_{(0,t]})$ (a cone) such that the boundary of the rate-region and a hyper-plane intersect at more than one point, we will need to use the normal corresponding to the hyper-plane in constructing the appropriate perturbation. Thus, on a case-by-case basis the same argument can be carried out for every rate-region. 
\end{note}

Finally, we prove Lemma~\ref{lem:quasic_mw}.

\emph{Lemma \ref{lem:quasic_mw}:} 
For $t\in\N$, $G_t(\cdot)$ is quasi-continuous on $\R^{K\times t}_+$.
\begin{IEEEproof}
Consider the queueing equation, i.e., 
\begin{align*}
\Wv_{T-1}=[\Wv_{T} - \Rvec_T]^+ + \av_T \quad T\in \N,
\end{align*}
where $\Rvec_{(\cdot)}\in \mathcal{H}(\Wv_{(\cdot)})$ where $\mathcal{H}(\cdot)$ is the maximal monotone correspondence defined in (\ref{eq:maxmono}).
Assume that we start the system at (fixed) time $t\in\N$ with workload vector $\Wv_t\in\R_+^K$; we will often assume that $\Wv_t=\mathbf{0}$. First, for $t\in\N$ we define a correspondence $\mathcal{G}_t(\Wv_t,\av|_{(0,t]}): \R_+^{(t+1)K}\rightrightarrows\R_+^K$ that represents all possible workload vectors at time $0$ that can be achieved from the inputs $(\Wv_t,\av|_{(0,t]})$. This results from the successive application of the queueing equation where we use all possible values in $\mathcal{H}(\cdot)$ based upon the workload vector that results at each step. Our goal is show that $\mathcal{G}_{t}(\cdot)$ admits a quasicontinuous selection which we shall call $\tilde{G}_t(\cdot)$. For $t>1$ it suffices to establish that 
\begin{align*}
& \mathcal{G}_{t-1}(\Wv_t,\av|_{(1,t]})-\mathcal{H}(\mathcal{G}_{t-1}(\Wv_t,\av|_{(1,t]})) :=\\
 & \quad \{\xv\in\R^K: \xv=\mathbf{y}-\mathbf{z} \text{ for some } \mathbf{y}\in \mathcal{G}_{t-1}(\Wv_t,\av|_{(1,t]})  \text{and }  \mathbf{z}\in \mathcal{H}(\mathcal{G}_{t-1}(\Wv_t,\av|_{(1,t]})) \}
\end{align*} 
admits a quasicontinuous selection which we shall call (with an abuse of notation) $\tilde{G}_{t-1}(\Wv_t,\av|_{(1,t]})-H(\tilde{G}_{t-1}(\Wv_t,\av|_{(1,t]}))$: 1) since $[\cdot]^+$ is a continuous function, we have \\
$[\tilde{G}_{t-1}(\Wv_t,\av|_{(1,t]} ) - H(\tilde{G}_{t-1}(\Wv_t,\av|_{(1,t]}))]^+$ being a quasi-continuous function; and finally, 2) by properties of projections and by the definition of quasi-continuity we get the quasi-continuity of $\tilde{G}_{t}(\Wv_t,\av|_{(0,t]})$. We should add a note of caution here that even though for notational convenience we write 
\begin{align*}
\tilde{G}_{t}(\Wv_t,\av|_{(0,t]})= [\tilde{G}_{t-1}(\Wv_t,\av|_{(1,t]} )-H(\tilde{G}_{t-1}(\Wv_t,\av|_{(1,t]}))]^++\av_1,
\end{align*} 
it need not be the case that the quasi-continuous selection that we obtain for $\tilde{G}_t$ be related using the above queueing equation to the quasi-continuous selection for $\tilde{G}_{t-1}$. Additionally, we may not even use the quasi-continuous selection $H(\cdot)$. Therefore, the property highlighted in Remark~\ref{rem:rate_calc} need not hold.

The proof will once again use mathematical induction where our induction step will assume that $\mathcal{G}_{t-1}(\cdot)$ is \emph{osc}. Since $\Wv_{0} \leq \Wv_t + \mathbf{a}{(0,t]}$ for any $\Wv_{0}\in\mathcal{G}_t(\Wv_t,\av|_{(0,t]})$, it follows that $\mathcal{G}_t(\Wv_t,\av|_{(0,t]})$ is \emph{lb}. Now using \cite[Ex. 12.8b, pg. 536]{RockafellarWets1998} we know that $\mathcal{H}(\cdot)$ is both \emph{lb} and \emph{osc}, and therefore by \cite[Thm. 5.19, pg. 160]{RockafellarWets1998} it is also \emph{usc}. Then using \cite[Prop. 5.52b, pp. 184--185]{RockafellarWets1998} we have $\mathcal{H}(\mathcal{G}_{t-1}(\cdot))$ being \emph{osc}. Therefore it also follows that $\mathcal{G}_{t-1}(\cdot)-\mathcal{H}(\mathcal{G}_{t-1}(\cdot))$ is also \emph{osc}. Once this has been demonstrated the induction step is very easy as $\mathcal{G}_{t}(\cdot)$ is obtained from $\mathcal{G}_{t-1}(\cdot))-\mathcal{H}(\mathcal{G}_{t-1}(\cdot))$ by continuous transformations, as mentioned above. This same method also allows us to establish the initial step of the induction procedure; note that we will be dealing with $\mathcal{H}(\Wv_t)$ and $\Wv_t-\mathcal{H}(\Wv_t)$ in this case.

Since $\mathcal{G}_{t-1}(\cdot)-\mathcal{H}(\mathcal{G}_{t-1}(\cdot))$ is \emph{lb}, using \cite[Ex. 12.8b, pg. 536]{RockafellarWets1998} we have $\mathcal{G}_{t-1}(\cdot)-\mathcal{H}(\mathcal{G}_{t-1}(\cdot))$ also being \emph{usc}. Finally, we get a quasi-continuous selection by \cite[Thm. 2.2]{CaoMoors2005} and \cite[Thm. 3.4]{CazacuLawson2007} since $\mathcal{G}_{t-1}(\cdot)-\mathcal{H}(\mathcal{G}_{t-1}(\cdot))$ is \emph{usc} and takes compact values. 

The required result then follows by setting $G_t(\av|_{(0,t]})=\tilde{G}_{t}(\mathbf{0},\av|_{(0,t]})$.
\end{IEEEproof}

\section{Proof of Lemmas~\ref{claimzero}-\ref{lem:almost-compact_G}}\label{sec:app3}

First we prove Lemma~\ref{claimzero}:

\emph{Lemma~\ref{claimzero}}: 
Consider an arrival process $\av \in \D^K_\muv$. There exists a $s^*=s^*(\av) < \infty$  such that 
the workloads at time $-s^*$ under $\av$ falls within 
the rate region $\RR_s$, i.e.,
$G^{\mathrm{wc}}(\av|_{(s^*,\infty]}) \in \RR_s$. Furthermore, 
for any sequence of arrival processes $\{\av^n \in \D^K_\muv\}$ converging to
$\av$ (in scaled uniform topology), the workloads at time $-s^*$ under $\av^n$, when
$n$ is large enough, also fall within 
the rate region $\RR_s$, i.e., 
$\exists n_0$ such that $G^{\mathrm{wc}}(\av^n|_{(s^*,\infty]}) \in \RR_s$ for $n> n_0$. 
\begin{IEEEproof}
Consider the normalized sum arrivals and the normalized sum workloads, and follow the proof in \cite{Wisch01,Ganes04} for the (aggregate) single-queue scenario. 
Given the definition of $H^{\mathrm{wc}}$ and the simplex capacity region $\RR_s$, the queue dynamics for the normalized sum workload is that of a single queue whose arrivals are the normalized sum of the arrivals, i.e., 
\beq 
\hat \Wv_{t-1} = [\hat{\Wv}_{t} - 1]^+ + \hat{\av}_{t},\label{eq:sumqdynamic}
\eeq 
where recall that  the hat ($\,\hat{\cdot}\,$) notation means the normalized sum over all users, i.e. $\hat \av_t = \sum_{k=1}^K a^k_t/C^k$ and $\hat \Wv_t = \sum_{k=1}^K W^k_t/C^k$. 
Recursion of the queue dynamics \reqn{eq:sumqdynamic} and letting $T\rightarrow \infty$ where $\Wv_T \in \RR_s$, gives the standard expression for the infinite-horizon sum workload \cite{Ganes04}: 
\beq 
\hat{\Wv}_0 := \hat{G}(\av) =  \sup_{t\in \N} \hat{\av}(0,t]-(t-1),
\eeq 
where $\hat{G}$ represents the infinite-horizon normalized sum workload
mapping; in other words, for all $s$, $\hat{\Wv}_s = \hat{G}(\av|_{(s,\infty]})$ represent the normalized sum workload at time $s$, under arrival sequence $\av$. 

To prove the lemma we use the fact that the rate region $\RR_s$ is simplex, hence $\hat{\Wv}_s \le 1 \Leftrightarrow \Wv_s \in \RR_s$.  Thus, it suffices to show that there is a finite $n_0'$ and a finite $s$ such that $\hat{G}(\av|_{(s,\infty]}) \le 1$, and for $n\ge n_0'$, $\hat{G}(\av^n|_{(s,\infty]}) \le 1$.

Since $\av \in  \D^K_\muv$, there is a $t_0<\infty$ such that for all $\epsilon>0$, 
$t > t_0$ and $k\in K$, $\frac{a^k(0,t]}{t} \le \mu^{k} +\epsilon C^k.$
Since $\muv \in \mbox{int}(\RR_s)$, we choose $\epsilon=(1-\hat{\muv})/4K$. We now have that for all $t\ge t_0$, 
$\frac{\hat{\av}(0,t]}{t} \le \hat{\muv}  + \epsilon K = (1+3\hat{\muv} )/4 < 1.$
In other words,  the workload at time zero is a function of only the arrivals 
within time $(0,t_0]$ and hence, 
\begin{align} \label{Wt1} 
\hat{\Wv}_0(\av) = \sup_{1\le t\le t_0}  \hat{\av}(0,t] - (t-1).
\end{align} 

Let $s^* \le t_0< \infty$ be the minimum values of the optimizing $t$'s in the above equation.
It can be shown \cite[Lemma 5.4]{Ganes04} that 
\[
\hat{G}(\av|_{(s^*,\infty]}) \le 1.
\]

It is known that $\hat{G}$ is continuous on $\D_{\hat{\muv}}$  \cite[Lemma 13]{Wisch01} 
when $\hat{\muv}<1$. 
However,  this together with continuity of shift mapping implies that for all $\{\av^n\}$ such that $\av^n$ converges to $\av$ in scaled uniform topology, 
\begin{equation}\label{letssee}
\hat{G}(\av^n|_{(u,\infty]}) \rightarrow \hat{G}(\av|_{(u,\infty]}) \mbox{      for all  } u\in [0,s^*].
\end{equation}
In particular, (\ref{letssee}) implies that there exists $n_0$ such that for all $n \ge n_0$, the normalized sum workload under arrival sequence $\av^n$ at time $u=s^*$ is no more than 1 packets, i.e., 
\[
\hat{G}(\av^n|_{(s^*,\infty]}) < 1 \text{ for all } n \ge n_0.
\]
However, since the rate region is a simplex,  
the workload vectors at time $s^*$, under $\av$ and $\av^n$ lie in the 
rate region $\RR_s$. Hence, we have the assertion of the lemma.
\end{IEEEproof}

Next, we prove Lemma~\ref{lem:quasi-continuous_G}:

\emph{Lemma \ref{lem:quasi-continuous_G}:}
Let $\muv \in \mbox{int}(\RR_s) $, $\av$ be an arrival sequence with rate $\muv$ with $\Ish(\av)<+\infty$, and
  $\mcW = G^{\mathrm{wc}}(\av)$ be its corresponding steady state 
workload. For any $\mcW  \in \sideset{^\av}{^{\mathrm{wc}}}\letG $ there exists a
sequence of arrivals $\{\av^n \in \D^K_\muv\}$ such that $\av^n$ converges
 to $\av$ in the scaled uniform norm topology, $G^{\mathrm{wc}}(\av^n) \rightarrow G^{\mathrm{wc}}(\av)$, 
 $G^{\mathrm{wc}}$ is continous at $\av^n$, and  $\Ish(\av^n) \rightarrow \Ish(\av)$.
  
\begin{IEEEproof} 
Lemma~\ref{claimzero} implies that 
for any given arrival sequence $\av \in \D^K_\muv$,   there exists a $s^*$ such that 
\[
G^{\mathrm{wc}}(\av) = G_{s^*}^{\mathrm{wc}}(\av|_{(0,s^*]}).
\] 
However, $G_{s^*}^{\mathrm{wc}}$ is quasi-continuous on $\R^{K\times s^*}_+$. 
This implies that there exists a sequence of finite arrivals $\{\hat{\av}^n|_{(0,s^*]}\}$ such that 
\begin{enumerate}
\item $\hat{\av}^n|_{(0,s^*]} \rightarrow \av|_{(0,s^*]}$;  
\item $G_{s^*}^{\mathrm{wc}}$ is continuous  
at $\hat{\av}^n|_{(0,s^*]}$; and
\item $G_{s^*}^{\mathrm{wc}}(\hat{\av}^n|_{(0,s^*]}) \rightarrow 
G_{s^*}^{\mathrm{wc}}(\av|_{(0,s^*]})$. 
\end{enumerate}

Now construct the sequence of arrivals
$\{\av^n\}$ via concatenation of $\av|_{(s^*,\infty]}$ and $\hat{\av}^n|_{(0,s^*]}$. It is immediate that $\av^n\rightarrow \av$.

Appealing to Lemma~\ref{claimzero}, for $n$ large enough (greater than $n_0$) we have
\begin{enumerate}
\item convergence of $G^{\mathrm{wc}}(\av^n)$ to $G^{\mathrm{wc}}(\av)$ since
\begin{align*} 
G^{\mathrm{wc}}(\av^n)
 =  G_{s^*}^{\mathrm{wc}}(\av^n|_{(0,s^*]}) 
\rightarrow  G_{s^*}^{\mathrm{wc}}(\av|_{(0,s^*]}) 
 =  G^{\mathrm{wc}}(\av); 
\end{align*}
\item continuity of $G^{\mathrm{wc}}(\cdot)$ at $\av^n$. For any sequence converging to $\av^n$ in $\D_\muv^K$ by appealing to Lemma~\ref{claimzero} we know that far enough along every sequence only the arrivals in $(0,s^*]$ matter. Now using the fact that projection is continuous on $\D_\muv^K$, we get the result from the continuity of $G_{s^*}^{\mathrm{wc}}$ at $\av^n|_{(0,s^*]}$. 
\end{enumerate}
This establishes the quasi continuity of function $G^{\mathrm{wc}}$. 
Lastly, Assumptions~\ref{Assumption2} and ~\ref{Assumption-cont} ensure  that 
$\Ish(\av^n)\rightarrow \Ish(\av)$.
\end{IEEEproof}

Finally, we prove Lemma~\ref{lem:almost-compact_G}.

\emph{Lemma~\ref{lem:almost-compact_G}:}
If $\muv \in \mbox{int}(\RR_s)$, the mapping 
$G^{\mathrm{wc}}(\cdot)$ is almost compact on $\D^K_{\muv}$ with respect to the scaled uniform norm topology. 
\begin{IEEEproof} 
This follows almost exactly along the same lines as the proof of Lemma \ref{lem:quasi-continuous_G}. For any $\av^n\rightarrow \av$ we proved the existence of a $n_0$ such that for $n\geq n_0$ such that the workload vectors  only depended on the arrivals within time $(0,t_0]$. Thus, the proof of almost compactness simply follows from Lemma \ref{lem:almostcompact}. Note that we have used the fact that the projection operator is continuous on $\D_\muv^K$.
\end{IEEEproof}

\section{Proof of Lemma \ref{lem:MSLDPItBounds}}\label{sec:app4}

Next we prove Lemma \ref{lem:MSLDPItBounds} which gives the bounds on $I_t$.

\emph{Lemma \ref{lem:MSLDPItBounds}:} 
For $K=2$, $\bv \in \R^2_+$, $I_t(\bv)$ can be bounded as 
\begin{align*}
 I_t(\bv) \ge \min_{u\in (0,t]} u\sum_{k=1}^K\Lambda_1^*\left(\frac{1}{u} \Big(\text{Proj}_{\XXX(u,\bv)}(\vect{0})\Big)^k\right) 
\end{align*}
and when $\bv \not\in [0,1)^2$, 
\begin{align*} 
I_t(\bv) \le \min_{u\in (0,t]} u\sum_{k=1}^K\Lambda_1^*\left(\frac{1}{u}(b^k+(u-1)H(\bv)^k)\right),
\end{align*}
where we recall that the convex set $\XXX(u,\bv) \subseteq \R^2_+$ is defined as
\begin{align*} 
\XXX(u,\bv) := & \{\bv+\mathbf{v}:  \mathbf{v}\in\R_+^2\text{ and }v^1+v^2=(u-1)\}.
\end{align*}

\begin{IEEEproof}
Let $\bv \in \R^2_+$, time $u \in (0, t]$, arrival path $\av|_{(0,t]} \in \A(u, \bv)$, and $\Wv_i \in \R^2_+$ be the workload vector at time $-i$ for $i\in (0,u]$. Assume without loss of generality that $t>1$ as it is easy to see that both bounds turn to be $I_1(\bv)$. Owing to this we can also assume that $u>1$ since the terms corresponding to $u=1$ in both bounds evaluate to $I_1(\bv)$.

We first show the lowerbound \reqn{eq:ItLB}. As we have noted earlier that for $\av|_{(0,t]} \in \A(u,\bv)$, the $[\cdot]^+$ function can be removed from the queue dynamics. Hence, we have
$\av(0, u] = \bv + \sum_{i=1}^{u-1} H(\Wv_i)$, where $\Wv_u \in \RR_s$ and $\Wv_i \not\in \RR_s$ for all $i \in (0, u-1]$. Using this and the fact that $H(\Wv_i) \in \{\mathbf{v}\in\R_+^2 : v^1 +v^2 = 1\}$, for all $i \in (0, u-1]$, we have $\av(0, u] \in \XXX(u, \bv)$ where $\XXX(u,\bv)$ is defined above. Now, given any point $\dv\in \XXX(u,\bv)$, the constant-speed linear path with increments of $\dv/u$ is the minimum-cost path among all the paths with the same destination (using Property~1). In addition, among all the paths to destinations in $\XXX(u,\bv)$, the closest constant-speed linear paths $\av^*|_{(0,u]}$ to the equal line is the minimum-cost path (using Property~2). Since the closest point in $\XXX(u,\bv)$ to the equal line is $\text{Proj}_{\XXX(u,\bv)}(\vect{0})$, we have $\av^*|_{(0,u]} = (\av^*_i = \frac{1}{u} \text{Proj}_{\XXX(u,\bv)}(\vect{0}), i \in (0,u])$. 
Since the set of paths with destination in $\XXX(u,\bv)$ includes all paths in $\A(u,\bv)$, from \reqn{eq:defnIt} we have the lowerbound \reqn{eq:ItLB}: 
\begin{align*}
I_t(\bv) &= \min_{u\in (0,t]} \ \inf_{\xv\in \A(u,\bv)} \sum_{k=1}^K \sum_{i=1}^u \Lambda_1^*(x_i^k)\ge \min_{u\in (0,t]} \ \inf_{\xv\in \R^{Kt}_+: \xv(0,u] \in \XXX(u,\bv)} I_t(\xv)\\
			&= \min_{u\in (0,t]} \ \inf_{\xv\in \R^{Ku}_+: \xv \in \XXX(u,\bv)} I_u(\xv)= \min_{u\in (0,t]} \ u\sum_{k=1}^K\Lambda_1^*\left(\frac{1}{u} \Big(\text{Proj}_{\XXX(u,\bv)}(\vect{0})\Big)^k\right). 
\end{align*}

To show the upperbound \reqn{eq:ItUB}, we only need to show that the constant-speed linear path $\av|_{(0,u]} = (\av_i = \frac{1}{u} (\bv+(u-1)H(\bv)), i \in (0,u])$, is in $\A(u,\bv)$, when $\bv \not\in [0,1)^2$. Without loss of generality, we consider only when $b^1\ge b^2$ and $b^1\ge 1$. In this case, we set $H(\bv)=(1,0)$ and the queue dynamics gives 
\begin{align*}
\Wv_i = \frac{(u-i)}{u}(\bv+(u-1)(1,0)) - (u-1-i)(1,0),
\end{align*}
for all $i \in (0,u-1]$. Since $b^1\ge 1$, we have $W_i^1\ge 1$ and $W_i^1 \ge W_i^2$, and hence we can once again set $H(\Wv_i)=(1,0)$ for all $i \in (0,u-1]$. Hence, $\av|_{(0,u]}\in \A(u,\bv)$). 
\end{IEEEproof} 


\end{document}